\theoremstyle{plain}
  \newtheorem{thm}{\protect\theoremname}
  \theoremstyle{plain}
  \newtheorem*{thm*}{\protect\theoremname}
  \theoremstyle{plain}
  \newtheorem{cor}{\protect\corollaryname}
  \theoremstyle{plain}
  \newtheorem*{lem*}{\protect\lemmaname}
  \theoremstyle{plain}
  \newtheorem{lem}{\protect\lemmaname}
\DeclareMathOperator{\supp}{supp}
  \providecommand{\lemmaname}{Lemma}
\providecommand{\corollaryname}{Corollary}
\providecommand{\theoremname}{Theorem}
\begin{document}

\title{ Borderline weighted estimates for commutators  of singular integrals
} 

\author{Carlos P\'erez}
\address{Carlos P\'erez, Department of Mathematics, University of the Basque Country UPV/EHU and
IKERBASQUE, Basque Foundation for Science, Bilbao, Spain.
}
\email{carlos.perezmo@ehu.es}

\author{Israel P. Rivera-R\'{\i}os}
\address{Israel P. Rivera-R\'{\i}os, IMUS \& Departamento de Análisis Matemático, Universidad de Sevilla, Sevilla, Spain}
\email{petnapet@gmail.com}

 \subjclass[2010]{42B35,46E30}

 \keywords{Commutators, Rubio de Francia Extrapolation; $A_p$ weights; Hardy-Littlewood maximal function}

\thanks{The first author was  supported by Grant MTM2014-53850-P and the second author was supported by Grant MTM2012-30748, Spanish Government}

 \begin{abstract} 
In this paper we establish the following estimate 
\[
w\left(\left\{ x\in\mathbb{R}^{n}\,:\,\left|[b,T]f(x)\right| > \lambda\right\} \right)\leq \frac{c_{T}}{\varepsilon^{2}}\int_{\mathbb{R}^{n}}\Phi\left(\|b\|_{BMO}\frac{|f(x)|}{\lambda}\right)M_{L(\log L)^{1+\varepsilon}}w(x)dx
\]
where $w\geq0, \, 0<\varepsilon<1$ and $\Phi(t)=t(1+\log^+(t))$.
This inequality relies upon the following sharp $L^p$ estimate
\[
\|[b,T]f\|_{L^{p}(w)}\leq c_{T}\left(p'\right)^{2}p^{2}\left(\frac{p-1}{\delta}\right)^{\frac{1}{p'}} \|b\|_{BMO} \, \|f \|_{L^{p}(M_{L(\log L)^{2p-1+\delta}}w)}
\]where  $1<p<\infty, w\geq0 \text{ and } 0<\delta<1.$
As a consequence we recover the following estimate essentially contained in \cite{MR3008263}:
\[w\left(\{x\in\mathbb{R}^{n}\,:\,\left|[b,T]f(x)\right| >\lambda\}\right)\leq c_T\,[w]_{A_{\infty}}\left(1+\log^{+}[w]_{A_{\infty}}\right)^{2}\int_{\mathbb{R}^{n}} \Phi\left(\|b\|_{BMO}\frac{|f(x)|}{\lambda}\right)Mw(x)dx\]

We also obtain the analogue estimates for symbol-multilinear commutators for a wider class of symbols.

\end{abstract}
\maketitle

\section{Introduction}

Motivated by a classical inequality due to C. Fefferman and E. Stein
for the Hardy-Littlewood maximal function, namely 
\[
\|Mf\|_{L^{1,\infty}(w)}\leq c\int_{\mathbb{R}^{n}}|f|Mwdx
\]
where $M$ denotes the Hardy-Littlewood maximal operator and $w$
is a weight, i.e. a locally non negative integrable function, B. Muckenhoupt
and R. Wheeden conjectured that 
\[
\|Hf\|_{L^{1,\infty}(w)}\leq c\,\int_{\mathbb{R}^{n}}|f|Mwdx
\]
where $H$ is the Hilbert transform. This conjecture was recently
disproved by M. C. Reguera and C. Thiele \cite{MR2923171} (see also \cite{2013arXiv1312.5255C} for the result in higher dimensions). 
The failure of this conjecture was suggested by the first author in \cite{MR1260114} where the following positive result was obtained
\begin{equation}\label{EqMax}
\|Tf\|_{L^{1,\infty}(w)}\leq c_{\varepsilon,T}\int_{\mathbb{R}^{n}}|f|M_{L\left(\log L\right)^{\varepsilon}}(w)dx\qquad w\geq0,
\end{equation}
where $T$ is a Calderón-Zygmund operator (CZO).  In the recent work \cite{MR3327006}, T. Hytönen and the first 
author improved the control on the $c_{\varepsilon,T}$ constant and were 
able to consider the maximal singular operator 
$T^{*}$ obtaining the following estimate (see \cite{2015arXiv150501804D} for an improvement of this result)
\begin{equation}\label{1/epsestimate}
\|T^{*}f\|_{L^{1,\infty}(w)}\leq\frac{c_{T}}{\varepsilon}\int_{\mathbb{R}^{n}}|f(x)|M_{L\left(\log L\right)^{\varepsilon}}(w)(x)dx\qquad w\geq0
\end{equation}
which implies  
\begin{equation}\label{lognec}
\|T^{*}f\|_{L^{1,\infty}(w)}\leq c_T\, \log\left(e+[w]_{A_{\infty}}\right)\int_{\mathbb{R}^{n}}|f|Mwdx,
\end{equation}
when $w\in A_{\infty}$. This result improves the main theorem from \cite{MR2480568}, namely 
\begin{equation}\label{A1conj}
\|T^*\|_{ L^1(w)  \to L^{1,\infty}(w)  } \leq c_T\,[w]_{A_1}  \log\left(e+[w]_{A_{\infty}}\right). 
\end{equation}
It seemed that the logarithmic  factor was superfluous  and that it could be removed. However, this is not the case by a very impressive negative result obtained by F. Nazarov, A. Reznikov, V. Vasyunin and A. Volberg in \cite{2015arXiv150604710N}. In this work the authors disprove the so called $A_1$ conjecture, namely they prove
\begin{equation*}
\sup_{ w\in A_{1} } \frac{ \|H\|_{ L^1(w)  \to L^{1,\infty}(w)  } }{[w]_{A_1}} =\infty
\end{equation*}
where $H$ is the classical Hilbert transform. Furthermore, the same conclusion holds if the linear constant $[w]_{A_1}$  is replaced by $[w]_{A_1} \log(e+[w]_{A_1} )^{\alpha}$ for a  positive $\alpha<\frac{1}{5}$. This is indicating that most probably  \eqref{A1conj} is fully optimal.

The main purpose of this paper is to prove estimates similar to
\eqref{1/epsestimate} for commutators of CZOs $T$ with $BMO$ functions $b$, usually called the symbol. These operators are defined formally by the expression
$$[b,T]f=b T(f) - T(b\,f), $$
These commutators were introduced  by Coifman, Rochberg and Weiss in \cite{MR0412721} in connection with the classical factorization theorem for Hardy spaces. However,  many other applications were found much later, specially in the theory of elliptic operators \cite{MR1631658}, \cite{MR1088476}. Another interesting aspect of the theory is its connection with the following nonlinear commutators introduced by R. Rochberg and G. Weiss in \cite{MR717826}: 
\[
f
\rightarrow
Nf= T(f\,\log |f| )  - Tf\, \log |Tf|.
\]
This operator is interesting due to its relationship with the Jacobian mapping and with nonlinear P.D.E.
as shown in \cite{MR1288682} and \cite{MR1259100}.

The main result from \cite{MR0412721} states that $[b, T]$ is a bounded
operator on $L^{p}( \mathbb R^{n} )$, $1<p<\infty$, when $b$ is a
$BMO$ function and $T$ is a singular integral operator. In fact, the
$BMO$ condition of $b$ is also a necessary condition for the
$L^{p}$-boundedness of the commutator when $T$ is the Hilbert
transform.


From the theoretical point of view, these commutators are of interest because they are more singular than CZOs.  For instance, the first author proved in  \cite{MR1317714}  that these commutators are
not of weak type $(1,1)$ obtaining a suitable replacement, namely
the following $L\log L$ endpoint estimate: 
\[
w\left(\left\{ x\in\mathbb{R}^{n}\,:\,\left|\left[b,T\right]f(x)\right|>\lambda\right\} \right)\leq c\,\int_{\mathbb{R}^{n}}\Phi\left(\frac{|f|}{\lambda}\|b\|_{BMO}\right)wdx
\]
where $\Phi(t)=t\left(1+\log^{+}(t)\right)$,  $w\in A_{1}$ and the constant $c$ depends upon the $A_{1}$ constant of the weight.  The approach to prove this result was based on an appropriate non-standard good-$\lambda$ inequality using the 
 Fefferman-Stein   ``sharp'' maximal function. However, this method does not produce good results for
further developments and in particular when considering  non-$A_{\infty}$ weights. 

Later on, the first author together with G. Pradolini (\cite{MR1827073}) established the following estimate for arbitrary weights $w\geq 0$,
\[
w\left(\left\{ x\in\mathbb{R}^{n}\,:\,\left|\left[b,T\right]f(x)\right|>\lambda\right\} \right)\leq c_{T,\varepsilon}\int_{\mathbb{R}^{n}} \Phi\left(\frac{|f|}{\lambda}\|b\|_{BMO}\right)M_{L\left(\log L\right)^{1+\varepsilon}}wdx
\]
for any $\varepsilon>0$. The aim of this paper is to give a quantitative version of this estimate with a good control on the constant $C_{T,\varepsilon}$ in terms of $\varepsilon$.  In the simplest situation our estimate can be stated as follows 
\begin{equation}\label{eq:EndpIntro}
w\left(\left\{ x\in\mathbb{R}^{n}\,:\,\left|\left[b,T\right]f(x)\right|>\lambda\right\} \right)\leq \frac{c_{T}}{\varepsilon^{2}}\int_{\mathbb{R}^{n}} \Phi\left(\frac{|f|}{\lambda}\|b\|_{BMO}\right)M_{L\left(\log L\right)^{1+\varepsilon}}wdx.
\end{equation}

See Theorem  \ref{Extremo} for the general situation. In fact, we will obtain a wider class of  results since we will be considering symbol-multilinear commutators with symbols in $Osc_{\exp L^{s}}$ classes which are subspaces of the BMO space (cf. Section \ref{sub:Orlicz-maximal-functions}). We will see that this choice of symbols will be reflected in the maximal operator on the right hand side of the inequality.  As a consequence of these type of estimates we can recover, among other results, the following endpoint $A_{1}$ result from C. Ortiz \cite{MR3008263} (see Corollary \ref{Corolario1}):
\[
w\left(\left\{ x\in\mathbb{R}^{n}\,:\,\left|\left[b,T\right]f(x)\right|>\lambda\right\} \right)\leq c_{T}\,\Phi\left([w]_{A_{1}}\right)^{2}\int_{\mathbb{R}^{n}} \Phi\left(\|b\|_{BMO}\frac{|f(x)|}{\lambda}\right)w(x)dx.
\]

Estimate \eqref{eq:EndpIntro} should be compared with the case of  CZOs  \eqref{1/epsestimate}.  It is not clear whether is possible or not to establish \eqref{eq:EndpIntro} using techniques based on sparse operators as in \cite{2015arXiv150501804D}. Another open question is the analogue of the Muckenhoupt-Wheeden conjecture for the commutator, namely whether
$$w\left(\left\{ x\in\mathbb{R}^{n}\,:\,\left|\left[b,T\right]f(x)\right|>\lambda\right\} \right)\leq c_w\int_{\mathbb{R}^{n}} \Phi\left(\|b\|_{BMO}\frac{|f(x)|}{\lambda}\right)M^2(w)(x)dx.$$
holds for every weight $w$ or not. Techniques used in \cite{MR2923171} and \cite{2013arXiv1312.5255C} rely upon an endpoint extrapolation result, firstly established in \cite{MR1761362}, or upon variations of it as in    \cite{2015arXiv150501804D}. It is not clear how to perform a similar extrapolation from the $L\log L$ estimate that commutators satisfy.

This paper is organized as follows: Section \ref{MainResults} contains the statements of our main results and  the proof of Corollary \ref{Corolario1}. Section \ref{PN} contains precise definitions and facts which will be used throughout the paper. In section \ref{PTF} we give the proof of the strong type theorem, namely Theorem \ref{TipoFuerte}, and all the needed technical results. The proof of the endpoint estimate (Theorem \ref{Extremo}) is presented in Section \ref{PTD}.

\section*{Acknowledgments}

The authors are very grateful to Carmen Ortiz-Caraballo for her interest in this work and for suggesting  
some improvement in the presentation of the paper.  We also wish
to thank the referee for valuable comments on the paper.

\section{Main results}\label{MainResults}

To state our main results we need to introduce some notation.  Let $b_{i}\in Osc_{\exp L^{s_i}}$
$s_i\geq 1$, $i=1,\cdots,k$ (cf. section \ref{sub:Orlicz-maximal-functions} after Lemma \ref{Lemma:HolderGeneralizado}) and $T$ a CZO with associated kernel $K$. We define the symbol-multilinear
commutator with respect to the symbol $\vec{b}=\left(b_{1},\dots,b_{k}\right)$
as follows 
\[
T_{\vec{b}}f(x)=\int_{\mathbb{R}^{n}} \prod_{i=1}^{k}\left(b_{i}(x)-b_{i}(y)\right)K(x,y)f(y)dy
\]
We also denote
\[
\frac{1}{s}=\sum_{i=1}^{k}\frac{1}{s_{i}}
\]
and 
\[
\|\vec{b}\|=\prod_{i=1}^{k}\|b_{i}\|_{Osc_{\exp L^{s_{i}}}}.
\]
Our main results are the following. 

\begin{thm}\label{TipoFuerte} Let be \,$T_{\vec{b}}$ defined as above and let $w$
be a weight. Then
\[
\|T_{\vec{b}}f\|_{L^{p}(w)}\leq c_{T}\left(p'\right)^{k+1}p^{1+\frac{1}{s}}\left(\frac{p-1}{\delta}\right)^{\frac{1}{p'}} \|\vec{b} \| \, \|f \|_{L^{p}(M_{L(\log L)^{(1+\frac{1}{s})p-1+\delta}}w)}
\]
for every $\delta\in\left(0,1\right)$ and $p\in\left(1,\infty\right)$. 
\end{thm}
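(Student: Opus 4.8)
The plan is to obtain the weighted $L^p$ bound for the symbol-multilinear commutator $T_{\vec b}$ by combining three ingredients: a pointwise domination of $T_{\vec b}$ by iterated sparse/maximal operators, sharp constants in the off-diagonal boundedness of those building blocks, and a careful optimization of the auxiliary parameters. First I would reduce matters to a dyadic model. Using the standard Lerner-type sparse domination adapted to commutators (or the Fefferman--Stein sharp-maximal-function approach à la Pérez--Pradolini), one controls $|T_{\vec b}f|$ pointwise, up to a dimensional constant times $c_T$, by a sum over $\mathcal{S}$-sparse families of terms of the form $\prod_{i\in A}|b_i - \langle b_i\rangle_Q| \cdot \langle \prod_{i\notin A}|b_i-\langle b_i\rangle_Q|\,|f|\rangle_Q \mathbf 1_Q$, one term for each subset $A\subseteq\{1,\dots,k\}$. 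The worst term is $A=\varnothing$, which is a plain sparse operator applied to a function weighted by $\prod_i |b_i-\langle b_i\rangle_Q|$; the other terms are handled by the same scheme after peeling off the oscillation factors with a generalized Hölder inequality in the $\exp L^{s_i}$ / $L(\log L)^{1/s_i}$ Orlicz duality (Lemma~\ref{Lemma:HolderGeneralizado}). Each such peeling costs a factor $\|b_i\|_{Osc_{\exp L^{s_i}}}$ and replaces an $L^1$ average by an $L(\log L)^{1/s_i}$ average of $|f|$, so that globally one produces an $L(\log L)^{1/s}$-type average of $|f|$, matching the exponent $1+\tfrac1s$ that appears in the statement.

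Next I would run a weighted estimate for the resulting sparse/Orlicz operator against an arbitrary weight $w$. The key trick, as in the work of Pérez and collaborators on $A_\infty$-type bounds for commutators, is: given $w$ and the target exponent $p$, choose an auxiliary Young function and dualize, so that the dyadic sum is estimated by $\int |f|^p \, M_{\Psi} w$ for a suitable $M_\Psi$, at the cost of a constant depending on the "bumping" parameter. Concretely, one writes the $L^p(w)$ norm by duality against $g\in L^{p'}(w^{1-p'})$, expands the sparse sum, and applies the generalized Hölder inequality on each cube $Q$ to split $\langle |f| \cdot \text{(oscillations)}\rangle_Q \cdot \langle |g|\rangle_Q \cdot w(Q)/|Q|$; the sparseness then lets one sum via a Carleson embedding. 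The parameter $\delta$ enters precisely here: to make a series of the form $\sum_m (m+1)^{a} 2^{-\delta m}$ (coming from the $L(\log L)$ layers of the maximal function) converge one pays $\left(\tfrac{1}{\delta}\right)^{\text{something}}$, and the exponent $\tfrac{1}{p'}$ on $\tfrac{p-1}{\delta}$ is exactly what a Carleson embedding / reverse-Hölder argument with exponent $p$ yields. The exponent $(1+\tfrac1s)p - 1 + \delta$ on the logarithm is the bookkeeping of: $k$ layers from the $k$ oscillation factors (contributing $\tfrac1s \cdot p$ after the duality scaling), one layer from the sparse operator itself being morally a "commutator" (the extra $p-1$), plus the slack $\delta$ needed for summability.

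The last point is tracking the constants sharply. The factor $(p')^{k+1}$ should come from applying the $L^{p'}$ bound of the Hardy--Littlewood maximal function (or an $A_{p'}$-type estimate) once for each of the $k$ oscillation peelings plus once more for the underlying sparse operator; the factor $p^{1+\frac1s}$ comes from the $L^p$ side, with the fractional power $\tfrac1s$ reflecting that the Orlicz average $M_{L(\log L)^{1/s_i}}$ behaves like $M^{1+1/s_i}$ near $p=1$. I would keep every implicit constant explicit through Lemma~\ref{Lemma:HolderGeneralizado} and the Carleson embedding step, then optimize. I expect the main obstacle to be the bookkeeping in the multilinear peeling: making sure that the generalized Hölder inequality distributes the $k$ Orlicz exponents so that the total is exactly $L(\log L)^{1/s}$ on $|f|$ and not something larger, and that the constants from the $k$ separate applications multiply to $(p')^{k+1} p^{1+1/s}$ rather than to a worse bound like $(p')^{k+1}(p)^{k+1}$. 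A clean way to organize this is to prove the key one-cube Orlicz/Hölder estimate with optimal constants first as a standalone lemma, reducing the $k$-linear case to $k$ applications of the linear one, and only afterwards feed it into the sparse/Carleson machinery.
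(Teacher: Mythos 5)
Your plan is a genuinely different route from the paper's, and while it is plausible in outline, it leaves the quantitative heart of the theorem unaddressed.

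The paper does not use sparse domination and does not use a Carleson embedding. Its proof is: dualize $\|T_{\vec b}f\|_{L^p(w)}$ against $h\in L^p(v)$ with $v=M_{L(\log L)^{(1+1/s)p-1+\delta}}w$; run the Rubio de Francia algorithm $R$ so that $Rh v^{1/p}\in A_1$ with $[Rh]_{A_\infty}\leq c_n p'$; then apply the Fefferman--Stein sharp maximal function inequality (Lemma~\ref{OCPR}) together with the pointwise bound of $M^\sharp_\delta(T_{\vec b}f)$ from Lemma~\ref{MsharpPuntual}, and an induction on the number of symbols, to reduce everything to a single two-weight inequality for $M_{L(\log L)^{1/s}}$, namely Lemma~\ref{lem:2Weight}. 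That lemma, proved via precise estimates on inverse Young functions (Lemmas~\ref{Lema2} and~\ref{Lema3}), is where the factors $p^{1+1/s}$, $((p-1)/\delta)^{1/p'}$, and the exact exponent $(1+\tfrac1s)p-1+\delta$ come from; the $(p')^{k+1}$ comes from the $k+1$ successive uses of Lemma~\ref{Lema3Carmen} in the induction. So your heuristic attribution of the constants is roughly right in spirit but not accurate about where each factor actually arises.

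The genuine gap in your proposal is that you never actually establish the quantitative estimate. You reduce to ``a key one-cube Orlicz/Hölder estimate'' and a Carleson embedding, and then say you would ``keep every implicit constant explicit and optimize.'' But the specific constant $c\,p^{1+1/s}((p-1)/\delta)^{1/p'}$ with the precise exponent $(1+\tfrac1s)p-1+\delta$ is not a bookkeeping byproduct: it requires a nontrivial factorization of the Young function $t/(1+\log^+ t)^{1/s}$ into a $1/p$-power and a $1/p'$-power, with sharp lower bounds on the inverse functions $A_\rho^{-1}$ and $\tilde X_\rho^{-1}$. Without an analogue of Lemma~\ref{lem:2Weight} your sketch does not get off the ground quantitatively. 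Separately, at the time this paper was written a pointwise sparse domination for symbol-multilinear commutators with $Osc_{\exp L^{s_i}}$ symbols was not available (the paper explicitly flags the unclear status of sparse techniques for the endpoint), and establishing it with control on constants sharp enough for the stated bound is itself a substantial project, not a ``standard'' input. If you want to pursue the sparse route you should first prove the two-weight maximal inequality as a standalone lemma, exactly as the paper does, because you will need it in either approach.
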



This result can be applied to derive the following endpoint estimate.

\begin{thm}
\label{Extremo}Let be $T_{\vec{b}}$  and $w$ as above.  Then 
\[
w\left(\left\{ x\in\mathbb{R}^{n}\,:\,\left|T_{\vec{b}}f\right| > \lambda\right\} \right)\leq \frac{c_{T}}{\varepsilon^{k+1}}\int_{\mathbb{R}^{n}}\Phi_{\frac{1}{s}}\left(\|\vec{b}\|\frac{|f(x)|}{\lambda}\right)M_{L(\log L)^{\frac{1}{s}+\varepsilon}}w(x)dx
\]
for every $\varepsilon\in(0,1)$ where $\Phi_{\rho}(t)=t(1+\log^{+}(t))^{\rho},$ $\rho>0.$
\end{thm}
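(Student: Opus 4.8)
The strategy is the classical "Yano-type" / exponential-optimization argument that converts a family of strong $L^p$ bounds with controlled blow-up as $p\to1$ into an $L\log L$-type endpoint estimate. First I would reduce to the case $\lambda=1$ and $\|\vec b\|=1$ by homogeneity, so that we must bound $w\bigl(\{|T_{\vec b}f|>1\}\bigr)$ by $c_T\varepsilon^{-(k+1)}\int \Phi_{1/s}(|f|)\,M_{L(\log L)^{1/s+\varepsilon}}w$. By Chebyshev applied to the $L^p(w)$ norm and Theorem \ref{TipoFuerte},
\[
w\bigl(\{|T_{\vec b}f|>1\}\bigr)\le \|T_{\vec b}f\|_{L^p(w)}^p \le \Bigl(c_T (p')^{k+1}p^{1+\frac1s}\Bigr)^p \frac{(p-1)^{p/p'}}{\delta^{p/p'}}\int_{\mathbb R^n}|f(x)|^p\,M_{L(\log L)^{(1+\frac1s)p-1+\delta}}w(x)\,dx .
\]
The plan is then to choose $p=p(x)$ depending on $f$, roughly $p-1\sim \bigl(1+\log^+|f(x)|\bigr)^{-1}$, so that $|f(x)|^p\le e\,|f(x)|$ and, after splitting $\{|f|>1\}$ and $\{|f|\le1\}$, the factor $(p')^{(k+1)p}$ produces exactly the power $(1+\log^+|f(x)|)^{k+1}$ that, combined with the base $|f(x)|$, would give $\Phi_{1/s}$ raised to the wrong power — so this naive linearization must be refined.

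The correct route, which is what I would actually carry out, is to avoid pointwise choice of $p$ and instead integrate the $L^p$ inequality against a cleverly chosen measure, or equivalently to run the standard argument: decompose $f=f\chi_{\{|f|>1\}}+f\chi_{\{|f|\le1\}}$; the small part is handled by an $L^{2}(w)$ (or any fixed $p$) bound and Kolmogorov/Chebyshev, using that $M_{L(\log L)^{\beta}}w$ dominates $Mw$; for the large part one uses the known trick of summing the $L^p$ estimates over a lacunary sequence $p_j-1=2^{-j}$, or better, one invokes the inequality in the form with $\delta=\varepsilon$ and optimizes $p$ globally. A cleaner implementation: apply Theorem \ref{TipoFuerte} with $\delta=\varepsilon$ and with $p=p_0:=1+\tfrac{1}{\log(e+\|f\|_{\text{suitable}})}$-type choices localized via the Calderón–Zygmund decomposition of $f$ at height $1$; on each scale the power $(1+\tfrac1s)p-1+\delta$ is at most $\tfrac1s+\varepsilon+O(\varepsilon)$ when $p$ is close enough to $1$, so $M_{L(\log L)^{(1+1/s)p-1+\delta}}w\le M_{L(\log L)^{1/s+c\varepsilon}}w$, and after renaming $\varepsilon$ we land on the stated maximal operator. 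The constant bookkeeping: $(p')^{(k+1)p}\le (C/\varepsilon)^{k+1}$ when $p-1\sim\varepsilon$, $(p-1)^{p/p'}/\delta^{p/p'}\le 1$ when $\delta=p-1$, and $|f|^p\le e|f|$ on $\{|f|>1\}$ once $p-1\le 1/\log^+|f|$ — but since $p$ must be a single exponent, one instead sums: $\sum_j 2^{-j(\cdots)}$ converges and contributes the extra $\varepsilon^{-1}$, giving $\varepsilon^{-(k+1)}$ overall.

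The main obstacle is precisely this tension between needing $p$ to depend on $x$ (to get $|f(x)|^{p(x)}\lesssim |f(x)|$ and to turn $(p')^{k+1}$ into $(1+\log^+|f(x)|)^{k+1}$, i.e.\ into $\Phi_{1/s}$) and the fact that Theorem \ref{TipoFuerte} is an inequality for a \emph{fixed} $p$ with the maximal operator's exponent $(1+\tfrac1s)p-1+\delta$ also moving with $p$. Resolving it requires either (i) the lacunary summation over $p_j=1+2^{-j}$ combined with the elementary estimate $t(1+\log^+t)^{k+1}\approx \sum_j 2^{-j}\,t^{\,1+c2^{-j}}$ up to constants, or (ii) a direct Orlicz-duality computation showing $\int |f|^p M_{L(\log L)^{\beta(p)}}w\le \int \Phi_{1/s}(|f|)\,M_{L(\log L)^{1/s+\varepsilon}}w$ for the optimal $p$; I expect the paper to use the first, keeping careful track of how the three $p$-dependent constants in Theorem \ref{TipoFuerte} combine after summation to yield the clean $c_T/\varepsilon^{k+1}$. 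Everything else — the reduction by homogeneity, Chebyshev, the treatment of $\{|f|\le1\}$, and the monotonicity $M_{L(\log L)^{\alpha}}\le M_{L(\log L)^{\beta}}$ for $\alpha\le\beta$ — is routine.
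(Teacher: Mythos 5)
Your plan correctly identifies the general shape — reduce to Theorem \ref{TipoFuerte}, normalize, and contend with the tension between a fixed exponent $p$ and the pointwise behaviour of $|f|$ — but the mechanism you conjecture for resolving that tension (lacunary summation over $p_j=1+2^{-j}$, or Orlicz duality to turn $(p')^{k+1}$ into the logarithmic factor in $\Phi_{1/s}$) is not what the paper does, and more importantly it misses where $\Phi_{1/s}(|f|/\lambda)$ actually comes from. The paper's resolution is a Calder\'on--Zygmund decomposition of $f$ at height $\lambda$, $f=g+h$. The good part $g$ satisfies $|g|\lesssim 2^n\lambda$ pointwise, so for a single fixed $p$ close to $1$ one has $|g/\lambda|^p\lesssim |g/\lambda|$ uniformly, with no need to let $p$ depend on $x$; one then chooses $p$ and $\delta$ jointly (namely $1+\frac{\varepsilon}{3(1+1/s)}<p<1+\frac{\varepsilon}{2(1+1/s)}$ and $\delta=\varepsilon-(1+\frac1s)(p-1)$) so that $(1+\frac1s)p-1+\delta=\frac1s+\varepsilon$ \emph{exactly} and the constant in Theorem \ref{TipoFuerte} becomes $c\,\varepsilon^{-(k+1)}$. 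Crucially, the bad part $h=\sum_j h_j$ is \emph{not} handled through Theorem \ref{TipoFuerte} at all: after writing $T_{\vec b}h_j$ in terms of $(b-b_{Q_j})T h_j$ and $T((b-b_{Q_j})h_j)$ (and, for $k>1$, lower-order commutators), one uses the cancellation of $h_j$ with the kernel's regularity for the first group of terms and the endpoint estimate \eqref{1/epsestimate} for $T$ itself (Theorem 1.1 of \cite{MR3327006}) for the second. The factor $\Phi_{1/s}(|f(x)|/\lambda)$ in the final bound therefore does not arise from $|f|^p\approx|f|(1+\log^+|f|)^{\cdots}$; it arises from the generalized H\"older inequality on each CZ cube together with the elementary CZ estimate $\frac{1}{\lambda}|Q_j|\,\|f\|_{L(\log L)^{1/s},Q_j}\le 2\int_{Q_j}\Phi_{1/s}(|f|/\lambda)$. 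Finally, for $k>1$ the paper argues by induction on the number of symbols, invoking the endpoint inequality for the shorter commutators $T_{\vec\sigma}$ on the term $L_{31}$; your proposal does not address this recursion. So the gap is concrete: without the CZ decomposition and the decoupling of the bad part from Theorem \ref{TipoFuerte}, your argument cannot produce the $\Phi_{1/s}$ factor with the claimed $\varepsilon^{-(k+1)}$ constant, and neither of the two alternative mechanisms you propose is a correct substitute.
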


There is an interesting application of Theorem \ref{Extremo} from which 
we can recover one of the main results of \cite{MR3008263}.

\begin{cor}
\label{Corolario1}Let $T$ be a CZO and let $b\in BMO$.
\begin{enumerate}
\item If $w\in A_{\infty}$ then 
\[
w\left(\{x\in\mathbb{R}^{n}\,:\,\left|[b,T]f(x)\right| >\lambda\}\right)\leq c[w]_{A_{\infty}}\left(1+\log^{+}[w]_{A_{\infty}}\right)^{2}\int_{\mathbb{R}^{n}} \Phi\left(\|b\|_{BMO}\frac{|f(x)|}{\lambda}\right)Mw(x)dx, 
\]
\item If $w\in A_{1}$ then \textup{
\[
\begin{split} & w\left(\{x\in\mathbb{R}^{n}\,:\,\left|[b,T]f(x)\right| > \lambda\}\right)\\
 & \leq c[w]_{A_{1}}[w]_{A_{\infty}}\left(1+\log^{+}[w]_{A_{\infty}}\right)^{2}\int_{\mathbb{R}^{n}} \Phi\left(\|b\|_{BMO}\frac{|f(x)|}{\lambda}\right)w(x)dx\\
 & \leq c\Phi\left([w]_{A_{1}}\right)^{2}\int_{\mathbb{R}^{n}} \Phi\left(\|b\|_{BMO}\frac{|f(x)|}{\lambda}\right)w(x)dx
\end{split}
\]
}
\end{enumerate}
where $\Phi(t)=t(1+\log^{+}t)$.

\end{cor}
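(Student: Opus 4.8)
The plan is to derive Corollary~\ref{Corolario1} directly from Theorem~\ref{Extremo} applied in the case $k=1$, $b_1=b\in BMO$, so that $s_1=s=1$, $\|\vec b\|=\|b\|_{BMO}$ (up to the equivalence between $BMO$ and $Osc_{\exp L}$), and $\Phi_{1/s}=\Phi_1=\Phi$. Thus the starting point is the inequality
\[
w\left(\{x\in\mathbb{R}^{n}\,:\,\left|[b,T]f(x)\right|>\lambda\}\right)\leq \frac{c_{T}}{\varepsilon^{2}}\int_{\mathbb{R}^{n}}\Phi\left(\|b\|_{BMO}\frac{|f(x)|}{\lambda}\right)M_{L(\log L)^{1+\varepsilon}}w(x)\,dx,
\]
valid for every $\varepsilon\in(0,1)$. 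The whole game is then to choose $\varepsilon$ depending on $[w]_{A_\infty}$ so that $M_{L(\log L)^{1+\varepsilon}}w$ is controlled by a constant times $Mw$, and to keep track of the resulting constants.

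The key ingredient I would invoke is the quantitative reverse-H\"older / ``open'' property of $A_\infty$ weights, which gives that for $w\in A_\infty$ and any $r>0$ one has the pointwise bound $M_{L(\log L)^{r}}w(x)\le c_{n}\,r^{r}\,[w]_{A_\infty}^{?}\,Mw(x)$, or more precisely the sharp statement that $M_{L(\log L)^{r}}w \lesssim_{n} \Gamma(1+r)\,c^{r}\,[w]_{A_\infty}\,Mw$ type estimate; the clean way to phrase what is needed is: there is a dimensional constant $c_n$ such that
\[
M_{L(\log L)^{\varepsilon}}w(x)\le c_{n}\,(1+[w]_{A_\infty})^{\varepsilon}\cdot(\text{mild factor in }\varepsilon)\cdot Mw(x).
\]
In our situation the exponent is $1+\varepsilon$, so I would write $M_{L(\log L)^{1+\varepsilon}}w\le M_{L(\log L)^{2}}w$ for $\varepsilon\le 1$ and use the sharp $L\log L$-type bound $M_{L(\log L)^{1+\varepsilon}}w(x)\le c_{n}\big([w]_{A_\infty}(1+\log^+[w]_{A_\infty})\big)^{1+\varepsilon}Mw(x)$; then optimize. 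Concretely, with $N:=[w]_{A_\infty}$ I would bound the right-hand side of Theorem~\ref{Extremo} by
\[
\frac{c_{T}c_{n}}{\varepsilon^{2}}\,\big(N(1+\log^+ N)\big)^{1+\varepsilon}\int_{\mathbb{R}^{n}}\Phi\!\left(\|b\|_{BMO}\frac{|f|}{\lambda}\right)Mw\,dx,
\]
and choose $\varepsilon=\dfrac{1}{1+\log^+ N}\in(0,1]$, which makes $\big(N(1+\log^+N)\big)^{\varepsilon}=N^{\varepsilon}(1+\log^+N)^{\varepsilon}\le e\cdot e = e^{2}$ a bounded factor (using $N^{\varepsilon}=e^{\varepsilon\log N}\le e$ and $(1+\log^+N)^{\varepsilon}\le e$), while $1/\varepsilon^{2}=(1+\log^+ N)^{2}$. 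This yields
\[
w(\{|[b,T]f|>\lambda\})\le c_{T}\,[w]_{A_\infty}(1+\log^+[w]_{A_\infty})^{2}\int_{\mathbb{R}^{n}}\Phi\!\left(\|b\|_{BMO}\frac{|f|}{\lambda}\right)Mw\,dx,
\]
which is exactly part~(1).

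For part~(2), I would start from part~(1) and use the elementary fact that for $w\in A_1$ one has $Mw\le [w]_{A_1}w$ pointwise a.e., so the $A_\infty$ estimate upgrades to
\[
w(\{|[b,T]f|>\lambda\})\le c_{T}\,[w]_{A_1}[w]_{A_\infty}(1+\log^+[w]_{A_\infty})^{2}\int_{\mathbb{R}^{n}}\Phi\!\left(\|b\|_{BMO}\frac{|f|}{\lambda}\right)w\,dx.
\]
The second inequality of part~(2) then follows from $[w]_{A_\infty}\le c_n[w]_{A_1}$ and the monotonicity/submultiplicativity of $\Phi(t)=t(1+\log^+ t)$: indeed $[w]_{A_1}[w]_{A_\infty}(1+\log^+[w]_{A_\infty})^{2}\le c_n\,[w]_{A_1}^{2}(1+\log^+[w]_{A_1})^{2}=c_n\,\Phi([w]_{A_1})^{2}$. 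I would spell this last chain of inequalities out since it is the only place where one must be a little careful about which $A_p$-constant sits where.

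The main obstacle — really the only nontrivial point — is the quantitative control of $M_{L(\log L)^{1+\varepsilon}}w$ by $Mw$ with an explicit dependence on $[w]_{A_\infty}$ and a transparent dependence on $\varepsilon$; everything else is bookkeeping. This rests on the sharp reverse-H\"older inequality for $A_\infty$ weights (Hyt\"onen--P\'erez type), which I would either cite or record in the preliminaries (Section~\ref{PN}) as the statement that $\|w\|_{L(\log L)^{r},Q}\le c_n\,\big([w]_{A_\infty}(1+\log^+[w]_{A_\infty})\big)^{r}\langle w\rangle_Q$ for all cubes $Q$ and $r>0$; taking the supremum over $Q\ni x$ then gives the pointwise bound used above. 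Once that lemma is in hand, the optimization in $\varepsilon$ is forced by matching the $\varepsilon^{-(k+1)}=\varepsilon^{-2}$ growth in Theorem~\ref{Extremo} against the $(\cdot)^{\varepsilon}$ gain, and the choice $\varepsilon\sim 1/\log[w]_{A_\infty}$ is the natural one.
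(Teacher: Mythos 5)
Your overall strategy is exactly the paper's: apply Theorem~\ref{Extremo} with $k=1$, $s=1$, and then choose $\varepsilon \sim 1/(1+\log^+[w]_{A_\infty})$ to absorb the $[w]_{A_\infty}$-dependence coming from replacing $M_{L(\log L)^{1+\varepsilon}}$ by $M$. However, there is a quantitative gap in the intermediate lemma you commit to, and with your version of that lemma the computation does \emph{not} close to give the stated exponent.

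Concretely, you end up using the bound
\[
M_{L(\log L)^{1+\varepsilon}}w \le c_n\bigl([w]_{A_\infty}(1+\log^+[w]_{A_\infty})\bigr)^{1+\varepsilon} Mw,
\]
and then observe that with $\varepsilon = 1/(1+\log^+N)$, $N=[w]_{A_\infty}$, the factor $\bigl(N(1+\log^+N)\bigr)^{\varepsilon}$ is bounded and $1/\varepsilon^2 = (1+\log^+N)^2$. But $\bigl(N(1+\log^+N)\bigr)^{1+\varepsilon} = N(1+\log^+N)\cdot\bigl(N(1+\log^+N)\bigr)^{\varepsilon}$, so after multiplying by $1/\varepsilon^2$ you actually obtain
\[
c\,N(1+\log^+N)\cdot(1+\log^+N)^2 = c\,[w]_{A_\infty}(1+\log^+[w]_{A_\infty})^3,
\]
one power of $(1+\log^+[w]_{A_\infty})$ too many. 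The spurious factor comes entirely from the extra $(1+\log^+[w]_{A_\infty})$ inside the base of your intermediate estimate. The correct form, which follows from the sharp reverse H\"older inequality (Theorem~\ref{thm:SharpRHI}) combined with $\log t \le t^\alpha/\alpha$ and the choice $\alpha = 1/(c_n[w]_{A_\infty}(1+\varepsilon))$, is
\[
M_{L(\log L)^{1+\varepsilon}}w \le 2\bigl(c_n(1+\varepsilon)[w]_{A_\infty}\bigr)^{1+\varepsilon} Mw \le c_n'\,[w]_{A_\infty}^{1+\varepsilon}\,Mw \qquad (0<\varepsilon<1),
\]
with no logarithmic factor in the base. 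With this corrected bound, $[w]_{A_\infty}^{1+\varepsilon} = N\cdot N^{\varepsilon}\le eN$ for $\varepsilon=1/(1+\log^+N)$, and combining with $1/\varepsilon^2=(1+\log^+N)^2$ gives exactly $c\,[w]_{A_\infty}(1+\log^+[w]_{A_\infty})^2$, as in part~(1). The derivation of part~(2) from part~(1) via $Mw\le[w]_{A_1}w$, $[w]_{A_\infty}\le c_n[w]_{A_1}$, and the form of $\Phi$ is correct and matches the paper. So the fix is simply to replace your stated lemma $\|w\|_{L(\log L)^{r},Q}\le c_n\bigl([w]_{A_\infty}(1+\log^+[w]_{A_\infty})\bigr)^{r}\langle w\rangle_Q$ with the genuine reverse-H\"older consequence $\|w\|_{L(\log L)^{r},Q}\le 2\bigl(c_n r[w]_{A_\infty}\bigr)^{r}\langle w\rangle_Q$; otherwise the proof is the paper's argument.
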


\begin{proof}

For the proof of the corollary we follow the arguments in \cite{MR3327006}. First observe that for every $\alpha>0$ we have that $\log t\leq\frac{t^{\alpha}}{\alpha}$.
Then we can write
\[
\log(t)^{1+\varepsilon}\leq\frac{t^{\alpha(1+\varepsilon)}}{\alpha^{1+\varepsilon}},
\]
and hence 
\[
M_{L(\log L)^{1+\varepsilon}}w\leq\frac{1}{\alpha^{1+\varepsilon}}M_{L^{1+\alpha(1+\varepsilon)}}.
\]

Let us take $\alpha=\frac{1}{c_{n}[w]_{A_{\infty}}(1+\varepsilon)}$.
Then, using the reverse Hölder inequality (Theorem \ref{thm:SharpRHI}),
\[
\begin{split}\frac{1}{\varepsilon^{2}}M_{L\left(\log L\right)^{1+\varepsilon}} & \leq\frac{1}{\varepsilon^{2}}\left[c_{n}[w]_{A_{\infty}}(1+\varepsilon)\right]^{1+\varepsilon}M_{L^{1+\alpha(1+\varepsilon)}}w\\
 & \leq\frac{2}{\varepsilon^{2}}\left[c_{n}[w]_{A_{\infty}}(1+\varepsilon)\right]^{1+\varepsilon}Mw.
\end{split}
\]
If we choose $\varepsilon=\frac{1}{1+\log^{+}\left([w]_{A_{\infty}}\right)}$
we obtain the desired results just recalling that $[w]_{A_{\infty}}\leq[w]_{A_{1}}$.

\end{proof}

\section{Preliminaries and notation} \label{PN}
In this section we gather some definitions and properties which will be used throughout the paper. 

\subsection{$A_p$ weights}

We recall that a weight $w$ belongs to the class $A_p$, $1< p < \infty$, if
$$
[w]_{A_{p}}=\sup_{Q}\left(\frac{1}{|Q|}\int_{Q}w\right)\left(\frac{1}{|Q|}\int_{Q}w^{-\frac{1}{p-1}}\right)^{p-1}<\infty.$$
A weight $w$ belongs to the class $A_1$ if there is a finite constant $C$ such that
$$ \frac{1}{ |Q| } \int_{Q} w(y)\, dy \le C\inf_{Q} w, \label{A11}
$$
and the infimum of these constants $C$ is called the $A_1$ constant
of $w$ denoted by $[w]_{A_{1}}$. Since the $A_p$ classes are increasing with respect to $p$,
the $A_{\infty}$ class of weights is defined in a natural way by
$A_{\infty} = \cup_{p>1} A_p$. These classes of weights were introduced by B. Muckenhoupt in \cite{MR0293384} where it was shown that for \,$1<p<\infty$
 \[
 w\in A_{p}\, \iff M:L^{p}(w)\longrightarrow L^{p}(w)     
 \] 
and also
 \[ w\in A_{1}\iff M:L^{1}(w)\longrightarrow L^{1,\infty}(w).\] 
From the definition of $A_{\infty}$ it is not clear how to define an appropriate constant. However, Fujii proved essentially in \cite{MR0481968} another characterization: 
$$w\in A_{\infty}\iff[w]_{A_{\infty}}=\sup_{Q}\frac{1}{w(Q)}\int_{Q}M(\chi_{Q}w)dx<\infty$$
which was also rediscovered later on by Wilson in \cite{MR883661}. Recently, this quantity was defined as the $A_\infty$ constant in \cite{MR3092729} since it was proved to be the most suitable one. 
In particular, the following  optimal reverse H\"older's inequality obtained in \cite{MR3092729} (see also \cite{MR2990061} for a better proof and \cite{MR1979718} for some other related results) was used in the proof of Corollary \ref{Corolario1}.

\begin{thm}  \label{thm:SharpRHI}

Let \,$w \in A_{\infty}$, then there exists a dimensional constant $\tau_{n}$ such that
\begin{equation*}
  \left(\frac{1}{|Q|}\int_{Q}w^{r_{w}}\right)^{\frac{1}{r_{w}}}\leq\frac{2}{|Q|}\int_{Q}w.
\end{equation*}
where 
$$r_w=1+\frac{1}{\tau_{n} [w]_{A_{\infty} }}$$
\end{thm}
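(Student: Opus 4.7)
The plan is to derive this sharp reverse H\"older inequality directly from the Fujii--Wilson characterization $\int_{Q}M(w\chi_{Q})\,dx\le [w]_{A_{\infty}}w(Q)$, viewing the conclusion as a self-improvement of that $L^{1}$-type control of the maximal function to an $L^{1+\varepsilon}$-type control, with $\varepsilon\sim 1/[w]_{A_{\infty}}$.

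Fix a cube $Q$ and write $w_{Q}=\tfrac{1}{|Q|}\int_{Q}w$. First I would use $w(x)\le M^{d}_{Q}(w\chi_{Q})(x)$ a.e.\ on $Q$ (Lebesgue differentiation with the local dyadic maximal operator $M^{d}_{Q}$ built from the dyadic descendants of $Q$), together with the layer cake formula, to write
\[
\int_{Q}w^{1+\varepsilon}\,dx\;\le\;\int_{Q}w\,\bigl(M^{d}_{Q}(w\chi_{Q})\bigr)^{\varepsilon}\,dx\;=\;\varepsilon\int_{0}^{\infty}t^{\varepsilon-1}w(\Omega_{t})\,dt,
\]
where $\Omega_{t}=\{x\in Q:M^{d}_{Q}(w\chi_{Q})(x)>t\}$. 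Splitting at $t=w_{Q}$, the lower piece trivially contributes at most $|Q|w_{Q}^{1+\varepsilon}$. For $t>w_{Q}$, a Calder\'on--Zygmund stopping-time represents $\Omega_{t}$ as a disjoint union of maximal dyadic sub-cubes $\{P_{j}^{t}\}\subset Q$ with $t<w_{P_{j}^{t}}\le 2^{n}t$, giving $w(\Omega_{t})\le 2^{n}t\,|\Omega_{t}|$.

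The main obstacle is controlling the upper tail $\varepsilon\int_{w_{Q}}^{\infty}t^{\varepsilon-1}w(\Omega_{t})\,dt$: a single application of Fujii--Wilson to $Q$ only yields $|\Omega_{t}|\le[w]_{A_{\infty}}w(Q)/t$, which is too weak to make the integral converge for any $\varepsilon>0$. To bypass this, I would iterate the stopping-time: starting from $Q$ and recursively applying CZ at the doubled level $2w_{P}$ on each stopping cube $P$ produces a sparse family $\mathcal{F}=\bigcup_{k\ge 0}\mathcal{F}_{k}$ with $|\bigcup\mathcal{F}_{k}|\le 2^{-k}|Q|$ and the pointwise bound $w\le 2w_{P}$ on each stopping region $E(P)\subseteq P$ (of measure $\ge|P|/2$). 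This yields
\[
\int_{Q}w^{1+\varepsilon}\;\le\;2^{1+\varepsilon}\sum_{P\in\mathcal{F}}w_{P}^{1+\varepsilon}|E(P)|,
\]
and the delicate step is then to use Fujii--Wilson \emph{locally} on each $P\in\mathcal{F}$ to control how much $w_{P}$ can grow along the chain of ancestors. With the choice $\varepsilon=1/(\tau_{n}[w]_{A_{\infty}})$ for an appropriate dimensional $\tau_{n}$, I expect the geometric growth of $w_{P}^{\varepsilon}$ to be balanced against the geometric decay of the total Lebesgue measure $|\bigcup\mathcal{F}_{k}|$, so that the resulting series sums to a multiple of $|Q|w_{Q}^{1+\varepsilon}$; taking $(1+\varepsilon)$-th roots then produces the constant $2$ on the right-hand side.
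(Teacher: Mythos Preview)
The paper does not give its own proof of this theorem; it is quoted from the literature (the references \cite{MR3092729} and \cite{MR2990061} in the preliminaries section), so there is no ``paper's proof'' to compare against.

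On the substance of your sketch: the set-up through the layer cake and the bound $w(\Omega_t)\le 2^{n}t|\Omega_t|$ is exactly how the cited proofs begin, and you correctly note that applying Fujii--Wilson once to $Q$ is not enough. However, your detour into an iterated sparse family does not close as written. With stopping threshold $2w_P$, the averages along a chain of length $k$ may grow like $(2^{n+1})^{k}w_Q$ while the total measure of generation $k$ only decays like $2^{-k}|Q|$, so the sum $\sum_{P}w_{P}^{1+\varepsilon}|E(P)|$ need not converge for any $\varepsilon>0$. The phrase ``use Fujii--Wilson locally on each $P$ to control how much $w_P$ can grow along the chain of ancestors'' does not pin down a usable inequality: applying Fujii--Wilson to a stopping cube $P$ gives no improvement over the trivial bounds on $|\bigcup P'|$ or $\sum_{P'}w(P')$, since $[w]_{A_\infty}\ge 1$.

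What the cited proofs actually do is stay at a single level and \emph{absorb}. For each $t>w_Q$ one localizes Fujii--Wilson to each Calder\'on--Zygmund cube $P_j^t$ (using that $M^{d}_{Q}(w\chi_Q)=M^{d}_{P_j^t}(w\chi_{P_j^t})$ on $P_j^t$) to obtain
\[
\int_{\Omega_t}M^{d}_{Q}(w\chi_Q)\,dx\le [w]_{A_\infty}\,w(\Omega_t)\le 2^{n}[w]_{A_\infty}\,t\,|\Omega_t|.
\]
Feeding this into the layer-cake representation of $\int_Q\bigl(M^{d}_{Q}w\bigr)^{1+\varepsilon}$ reproduces that same integral on the right with coefficient $\tfrac{2^{n}[w]_{A_\infty}\varepsilon}{1+\varepsilon}$, which is $\le\tfrac12$ once $\varepsilon\le(\tau_n[w]_{A_\infty})^{-1}$; absorbing and then using $w\le M^{d}_{Q}w$ gives the result. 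This absorption step is the missing idea in your proposal.
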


\subsection{Orlicz maximal functions\label{sub:Orlicz-maximal-functions}}

We recall that $\Phi$ is a Young function if it
is a continuous, nonnegative, strictly increasing and convex function
defined on $[0,\infty)$ such that $\Phi(0)=0$ and $\lim_{t\rightarrow\infty}\Phi(t)=\infty$. We define the localized Luxembourg norm of a function $f$
with respect to a Young function $\Phi$ as follows 
\[
\|f\|_{\Phi,Q}=\|f\|_{\Phi(L),Q}= \inf\left\{ \lambda>0\,:\,\frac{1}{|Q|}\int_{Q}\Phi\left(\frac{|f(x)|}{\lambda}\right)dx\leq1\right\}
\]
which is equivalent to the following
\[
\|f\|'_{\Phi,Q}=\inf_{\mu>0}\left\{ \mu+\frac{\mu}{|Q_{j}|}\int_{Q_{j}}\Phi\left(\frac{|f(x)|}{\mu}\right)dx\right\}. 
\]
This result is due to Krasnosel'ski\u\i, M. A. and Ruticki\u\i, Ja. B. \cite[p. 92]{MR0126722} (see also \cite[p. 69]{Rao1991}). In fact,
\[\|f\|_{\Phi,Q}\leq \|f\|'_{\Phi,Q}\leq 2 \|f\|_{\Phi,Q}\]
which will be quite useful for our purposes.  Observe that the case $\Phi(t)=t$ corresponds to the usual average and we can see these localized norms as a ``different'' way of taking averages. We can also define the maximal function associated to $\Phi$ as
\[
M_{\Phi}f(x)=\sup_{x\in Q}\|f\|_{\Phi,Q}.
\]
Some useful examples that will be quite useful in the sequel are $L\log L$ functions 
\[ \Phi_{\rho}(t)=t(1+\log^{+}(t))^{\rho} \quad \text{with} \quad t \geq 0\]
where $\log^{+}(t)=\chi_{(1,\infty)}(t)\log(t)$ and $\rho>0$. For such $\Phi$
we shall denote \[\|f\|_{\Phi,Q}=\|f\|_{L\left(\log L\right)^{\rho},Q}.\]

Another useful property that makes interesting these ``non-standard
averages'' is the following generalized Hölder inequality.

\begin{lem}
\label{Lemma:HolderGeneralizado}Let $\Phi_{0},\Phi_{1},\Phi_{2},\dots,\Phi_{k}$
be Young functions. If 
\begin{equation}
\Phi_{1}^{-1}(t)\Phi_{2}^{-1}(t)\dots\Phi_{k}^{-1}(t)\leq\kappa\Phi_{0}^{-1}(t).\label{eqHgen}
\end{equation}
then for all functions $f_{1},\dots,f_{m}$ and all cubes $Q$ we
have that 
\[
\|f_{1}f_{2}\dots f_{k}\|_{\Phi_{0},Q}\leq k\kappa\|f_{1}\|_{\Phi_{1},Q}\|f_{2}\|_{\Phi_{2},Q}\dots\|f_{k}\|_{\Phi_{k},Q}.
\]

\end{lem}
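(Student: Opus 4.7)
The plan is to reduce the norm inequality to a pointwise Young-type inequality between the $\Phi_i$, and then average. By the homogeneity and convexity of Luxemburg norms the essential step is the following pointwise claim: under hypothesis \eqref{eqHgen}, for every $a_1,\ldots,a_k\geq 0$,
\[
\Phi_{0}\!\left(\frac{a_{1}a_{2}\cdots a_{k}}{\kappa}\right)\leq \sum_{i=1}^{k}\Phi_{i}(a_{i}).
\]
To prove this, I would set $s=\sum_{i=1}^{k}\Phi_{i}(a_{i})$. Then $\Phi_{i}(a_{i})\leq s$ for each $i$, so $a_{i}\leq \Phi_{i}^{-1}(s)$ by monotonicity of $\Phi_{i}^{-1}$. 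Multiplying and applying \eqref{eqHgen},
\[
a_{1}\cdots a_{k}\leq \Phi_{1}^{-1}(s)\cdots \Phi_{k}^{-1}(s)\leq \kappa\,\Phi_{0}^{-1}(s),
\]
and applying $\Phi_{0}$ to both sides yields the claim.

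Next I would apply this pointwise inequality to $a_{i}=|f_{i}(x)|/\mu_{i}$, where $\mu_{i}:=\|f_{i}\|_{\Phi_{i},Q}$ (assumed strictly positive; the degenerate case is trivial). Averaging over $Q$ and using the definition of the Luxemburg norm to bound $\frac{1}{|Q|}\int_{Q}\Phi_{i}(|f_{i}|/\mu_{i})\,dx\leq 1$ gives
\[
\frac{1}{|Q|}\int_{Q}\Phi_{0}\!\left(\frac{|f_{1}(x)\cdots f_{k}(x)|}{\kappa\,\mu_{1}\cdots\mu_{k}}\right)dx\leq \sum_{i=1}^{k}\frac{1}{|Q|}\int_{Q}\Phi_{i}\!\left(\frac{|f_{i}|}{\mu_{i}}\right)dx\leq k.
\]

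Finally I would absorb the factor $k$ by convexity. Since $\Phi_{0}$ is convex with $\Phi_{0}(0)=0$, we have $\Phi_{0}(t/k)\leq \Phi_{0}(t)/k$ for $k\geq 1$, so dividing the argument by an extra $k$ brings the average below $1$:
\[
\frac{1}{|Q|}\int_{Q}\Phi_{0}\!\left(\frac{|f_{1}\cdots f_{k}|}{k\kappa\,\mu_{1}\cdots\mu_{k}}\right)dx\leq 1,
\]
which by definition of $\|\cdot\|_{\Phi_{0},Q}$ gives exactly $\|f_{1}\cdots f_{k}\|_{\Phi_{0},Q}\leq k\kappa\,\mu_{1}\cdots\mu_{k}$, as required.

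There is no real obstacle here; the only delicate points are (i) justifying $a_{i}\leq \Phi_{i}^{-1}(s)$, which requires the standard convention that $\Phi_{i}^{-1}$ be the right-continuous inverse of a Young function so that monotonicity holds, and (ii) invoking convexity with $\Phi_{0}(0)=0$ to produce the extra factor $k$; everything else is bookkeeping.
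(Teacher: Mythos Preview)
Your proof is correct and follows essentially the same approach as the paper: establish the pointwise Young-type inequality $\Phi_0(a_1\cdots a_k/\kappa)\leq\sum_i\Phi_i(a_i)$ via the hypothesis and monotonicity of the inverses, then average over $Q$. Your explicit use of convexity ($\Phi_0(t/k)\leq\Phi_0(t)/k$) to absorb the extra factor $k$ is in fact cleaner than the paper's version, which places a $\frac{1}{m}$ outside the integral and concludes the norm bound without that step spelled out.
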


A particular case of interest, an especially in this paper, are the spaces defined 
by 
 \[
\|f\|_{Osc_{expL^{s}}}=\sup_{Q}\|f-f_{Q}\|_{\Psi_{s},Q}
\]
where 
\[
\Psi_{s}(t)=e^{t^{s}}-1 \qquad t\geq 0,
\]
with $s>0$, is a Young function.  Then the space $Osc_{\exp L^{s}}$ is defined as 
\[
Osc_{\exp L^{s}}=\left\{ f\in L_{loc}^{1}(\mathbb{R}^{n})\,:\,\|f\|_{Osc_{expL^{s}}}<\infty\right\} .
\]
We observe that John-Nirenberg's theorem yields $BMO=Osc_{\exp L}$. 
It's also clear that for every $s>1$ 
\[
Osc_{\exp L^{s}}\subsetneq BMO.
\]

Now we state a result borrowed from \cite{MR1895740} that will be used in the proof of Theorem  \ref{Extremo}.
\begin{lem} \label{Lema2.2Pt}Let $\Phi_{0},\dots,\Phi_{k}$
be continuous, nonnegative, strictly increasing functions on $[0,\infty)$
with $\Phi(0)=0$ and $\lim_{t\rightarrow\infty}\Phi(t)=\infty$ such
that 
\[
\Phi_{1}^{-1}(t)\Phi_{2}^{-1}(t)\dots\Phi_{k}^{-1}(t)\leq\Phi_{0}^{-1}(t)   \qquad t\geq 0, 
\]
then for all $0\leq x_{1},x_{2},\dots,x_{k}<\infty$ 
\[
\Phi_{0}(x_{1}x_{2}\dots x_{k})\leq\Phi_{1}(x_{1})+\Phi_{2}(x_{2})+\dots+\Phi_{k}(x_{k}).
\]

\end{lem}
To close this section we provide a proof of Lemma \ref{Lemma:HolderGeneralizado} and also a corollary of it that will be quite useful in the proof of Theorem \ref{Extremo}.
\begin{proof} Fix $(x_1,\dots, x_k)$ and consider $t_{0}=\Phi_{1}(x_{1})+\Phi_{2}(x_{2})+\dots+\Phi_{k}(x_{k})$. Combining  \eqref{eqHgen} and the fact that each  $\Phi_{i}$ is 
increasing it readily follows that 
\[
\Phi_{0}\left(\frac{\Phi_{1}^{-1}(t_{0})\Phi_{2}^{-1}(t_{0})\dots\Phi_{k}^{-1}(t_{0})}{\kappa}\right)\leq t_{0}
\]
and also that 
\[
\Phi_{i}^{-1}(t_{0})\geq\Phi_{i}^{-1}(\Phi_{i}(x_{i}))=x_{i}.
\]
Then we have that 
\begin{equation}
\Phi_{0}\left(\frac{x_{1}x_{2}\dots x_{k}}{\kappa}\right)\leq\Phi_{1}(x_{1})+\Phi_{2}(x_{2})+\dots+\Phi_{k}(x_{k})\label{eq:HGen2}
\end{equation}
We observe that this argument gives us a proof of Lemma \ref{Lema2.2Pt}.
Coming back to our proof, let us consider now $t_{i}>\|f_{i}\|_{\Phi_{i},Q}$.
We have that using \ref{eq:HGen2}, 
\[
\begin{split} & \frac{1}{m}\frac{1}{|Q|}\int_{Q}\Phi_{0}\left(\frac{|f_{1}\dots f_{k}|}{\kappa t_{1}\dots t_{k}}\right)\\
 & \leq\frac{1}{m}\left(\frac{1}{|Q|}\int_{Q}\Phi_{1}\left(\frac{|f_{1}|}{t_{1}}\right)+\dots+\frac{1}{|Q|}\int_{Q}\Phi_{k}\left(\frac{|f_{k}|}{t_{k}}\right)\right)\\
 & <1
\end{split}
\]
Consequently 
\[
\|f_{1}\dots f_{k}\|_{\Phi_{0},Q}\leq\kappa t_{1}\dots t_{k}
\]
and it is enough to take the infimum on each $t_{i}$ to finish the proof of the lemma. 
\end{proof}

As a particular case, the following corollary holds which will be used several times in this paper.

\begin{cor}\label{CorolarioInv}
Let $s_1,\dots,s_k\geq 1$ and denote $\sum^{k}_{i=1}\frac{1}{s_i}$. Then
\[\frac{1}{|Q|}\int_Q |f_1\dots f_k g| \leq c_s \|f_1\|_{\exp L^{s_1},Q}\dots\|f_k\|_{\exp L^s_k,Q}\|g\|_{L(\log L)^\frac{1}{s},Q} \]
\end{cor}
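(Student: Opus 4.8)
The plan is to derive Corollary \ref{CorolarioInv} as a direct application of the generalized H\"older inequality in Lemma \ref{Lemma:HolderGeneralizado}. First I would set up the Young functions: take $\Phi_i = \Psi_{s_i}$, i.e. $\Phi_i(t) = e^{t^{s_i}} - 1$ for $i = 1, \dots, k$, whose generalized inverses behave like $\Phi_i^{-1}(t) \sim (\log(1+t))^{1/s_i}$ for large $t$; take $\Phi_{k+1}(t) = \Phi_{1/s}(t) = t(1 + \log^+ t)^{1/s}$, so that $\Phi_{k+1}^{-1}(t) \sim t / (\log(1+t))^{1/s}$ for large $t$; and take $\Phi_0(t) = t$, the Young function generating the ordinary average. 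The multiplicative hypothesis \eqref{eqHgen} of the lemma then reads $\Phi_1^{-1}(t)\cdots\Phi_k^{-1}(t)\,\Phi_{k+1}^{-1}(t) \leq \kappa\, t$, and the product of the asymptotics is $(\log(1+t))^{(1/s_1 + \cdots + 1/s_k)} \cdot t/(\log(1+t))^{1/s} = t$ because $\sum_{i=1}^k 1/s_i = 1/s$; so the inequality holds with some constant $\kappa = c_s$ (one must check it near $t=0$ as well, but there all the inverses are controlled and the estimate is trivial).

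Next I would invoke Lemma \ref{Lemma:HolderGeneralizado} with these $k+1$ factors $f_1, \dots, f_k, g$ to conclude
\[
\|f_1 \cdots f_k\, g\|_{L,Q} \leq (k+1)\,\kappa\, \|f_1\|_{\Psi_{s_1},Q} \cdots \|f_k\|_{\Psi_{s_k},Q}\, \|g\|_{L(\log L)^{1/s},Q}.
\]
Finally I would observe that $\|h\|_{L,Q} = \frac{1}{|Q|}\int_Q |h|$ by the very definition of the Luxembourg norm for $\Phi_0(t) = t$, which turns the left-hand side into exactly $\frac{1}{|Q|}\int_Q |f_1 \cdots f_k g|$, and rename $(k+1)\kappa$ as the constant $c_s$ (absorbing the harmless factor $k+1$, which only depends on $k$ and hence on $s$ through the implicit bound $k \le s_{\min}/s \le \cdots$; in any case it is a constant of the allowed type). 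Here $\|f_i\|_{\exp L^{s_i},Q}$ in the statement is shorthand for $\|f_i\|_{\Psi_{s_i},Q}$, matching the notation introduced for the $Osc_{\exp L^s}$ spaces.

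The only genuinely nontrivial point — and the one I would spend a line on — is verifying the inverse-product inequality $\Phi_1^{-1}(t)\cdots\Phi_k^{-1}(t)\Phi_{k+1}^{-1}(t) \le c_s\, t$ for all $t \ge 0$, not just asymptotically. For $t$ bounded away from $0$ and $\infty$ this is a compactness/continuity statement; for $t \to \infty$ it is the exact exponent cancellation noted above; and for $t \to 0^+$ one uses that $\Psi_{s_i}^{-1}(t) \sim t^{1/s_i}$ and $\Phi_{1/s}^{-1}(t) \sim t$ near zero, so the product is $\sim t^{1 + \sum 1/s_i}$, which is $o(t)$ — fine. This is a standard computation of the type already carried out implicitly in the proof of Lemma \ref{Lemma:HolderGeneralizado}, so no real obstacle arises; the corollary is essentially a bookkeeping specialization of the lemma to the case $\Phi_0(t) = t$ with the $\exp L^{s_i}$ and $L(\log L)^{1/s}$ scales paired against each other.
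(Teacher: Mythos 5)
Your proposal is correct and takes essentially the same route as the paper: choose $\Phi_0(t)=t$, $\Phi_i=\Psi_{s_i}$ for $i=1,\dots,k$, and $\Phi_{k+1}=\Phi_{1/s}$, verify the inverse-product bound $\varphi_{s_1}^{-1}(t)\cdots\varphi_{s_k}^{-1}(t)\,\Phi_{1/s}^{-1}(t)\lesssim t$, and then apply Lemma \ref{Lemma:HolderGeneralizado}. The only loose remark is the claim that the factor $k+1$ can be absorbed into a constant depending on $s$ alone (taking $s_1=\dots=s_k=k$ gives $s=1$ for every $k$, so $k$ is not controlled by $s$); but the paper's $c_s$ is itself shorthand for a constant depending on the whole collection $s_1,\dots,s_k$, so this is a notational quibble rather than a gap.
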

\begin{proof}
We denote $\varphi_{\eta}(t)=e^{t^{\eta}}-1$. Then $\varphi_{\eta}^{-1}(t)=\log(x+1)^{\frac{1}{\eta}}$ and we have
that 
\[\varphi_{s_{1}}^{-1}(t)\dots\varphi_{s_{k}}^{-1}(t)\Phi^{-1}_\frac{1}{s}(t)\simeq
\varphi_{s_{1}}^{-1}(t)\dots\varphi_{s_{k}}^{-1}(t)\frac{x}{\log\left(x+1\right)^{\frac{1}{s}}}\leq x
\]
and Lemma \ref{Lemma:HolderGeneralizado} gives the desired result.
\end{proof}

\subsection{Symbol-multilinear commutators}

We recall that an operator $T$ initially defined on the Schwartz spaces and taking values into the space of
tempered distributions\, $T:S({\mathbb R}^n) \to S'({\mathbb R}^n)$\, is a CZO if, 

\begin{enumerate}
\item $T$ is bounded on $L^{2}(\mathbb{R}^{n})$. 
\item For each smooth and compactly supported function $f$,  $Tf$ admits the following representation 
\[
Tf(x)=\int_{\mathbb{R}^{n}}  K(x,y)f(y)dy \qquad x\not\in\supp f
\]
where $K$ is a standard kernel. Recall that a kernel $K:\mathbb{R}^{n}\times\mathbb{R}^{n}\setminus\Delta\longrightarrow\mathbb{R}$,  where $\Delta$ is the diagonal in $\mathbb{R}^{n}\times\mathbb{R}^{n}$, is a locally integrable function such that for some constants $C_{1},C_{2},\gamma>0$ the following conditions hold:
\begin{enumerate}
\item Size condition\[\left|K(x,y)\right|\leq C_{1}\frac{1}{|x-y|^{n}}\qquad \text{if } x\not=y.\]
\item Regularity condition \[|K(x,y)-K(x',y)|+|K(y,x')-K(y,x)|\leq C_{2}\frac{|x-x'|^{\gamma}}{|x-y|^{n+\gamma}}\qquad\]
provided that $|x-x'|\leq\frac{1}{2}|x-y|$.
\end{enumerate}
\end{enumerate}

The symbol-multilinear commutator \,$T_{\vec{b}}$\, with vector symbol $\vec{b}=(b_1,\cdots,b_k)$,  $b_{i}\in Osc_{\exp L^{s_{i}}}, i=1,\cdots,k$, and CZO $T$ with kernel $K$ is defined for smooth functions $f$ as follows 
\[
T_{\vec{b}}f(x)=\int_{\mathbb{R}^{n}} \prod_{i=1}^{k}\left(b_{i}(x)-b_{i}(y)\right)K(x,y)f(y)dy \qquad x\not\in\supp f. 
\]

Let $b=\{b_{1},b_{2},\dots,b_{k}\}$ be a set of symbols with $b_{i}\in Osc_{\exp L^{s_{i}}}, i=1,\cdots,k$. Also, let $b=\sigma\cup\sigma'$ where $\sigma$ and $\sigma'$ are
pairwise disjoint sets be a splitting of $b$.  If we identify $i$ and $b_{i}$ we can introduce the following notation 
\[
\left(b(x)-\lambda\right)_{\sigma}=\prod_{i\in\sigma}\left(b_{i}(x)-\lambda_{i}\right)
\]
where $\lambda=(\lambda_{1},\lambda_{2},\dots,\lambda_{k})$ and also
to write $\sum_{i\in\sigma}\frac{1}{s_{i}}.$

By $C_{j}(b)$ we refer the family of all the subsets $\sigma$ of
$b$ such that $\#\sigma=j$. We shall also omit the set of symbols and write just $C_j^k$. Finally if $\sigma$ is a subset of $b$ we write
\[
T_{\vec{\sigma}}f(x)=\int_{\mathbb{R}^{n}} \prod_{i\in\sigma}\left(b_{i}(x)-b_{i}(y)\right)K(x,y)f(y)dy=\int_{\mathbb{R}^{n}} \left(b(x)-b(y)\right)_{\sigma} K(x,y)f(y)dy \qquad x\not\in\supp f. 
\]

We end this section with some further notation. We write 
\[\|\vec{b}\|=\prod_{b_i \in b}\|b_i\|_{Osc_{\exp{L^{s_i}}}}\] 
and similarly
\[\left\|\vec{\sigma}\right\|=\prod_{b_i\in \sigma}\|b_i\|_{Osc_{\exp{L^{s_i}}}}.\]
We will denote by $\#\sigma$ the cardinal of the set of symbols $\sigma$.

\subsection{Some estimates involving the sharp function}
In this paper we will use two classical operators and some of their variations. The first one is the Hardy-Littlewood maximal operator, 
\[Mf(x)=\sup_{x\in Q}\frac{1}{|Q|}\int_{Q} |f(y)|dy,\]
where each $Q$ is a cube with sides parallel to the axis. Also, $M^d$ will denote its dyadic version, where the supremum is taken over dyadic cubes. We will also use the following variants, 
$M_\varepsilon(f)=M(|f|^\varepsilon)^\frac{1}{\varepsilon}$,  and similarly for $M_\varepsilon^d$ where $\varepsilon\in (0,\infty)$.
The second operator is the Fefferman-Stein sharp maximal function, namely
\[
M^{\sharp}f(x)=\sup_{x\in Q}\frac{1}{|Q|}\int_{Q}\left|f(y)-f_{Q}\right|dy,
\]
and its dyadic counterpart $M^{\sharp,d}$. Similarly as above we define the following useful variation
$$M_\delta^\sharp(f)=M^\sharp(|f|^\delta)^\frac{1}{\delta}$$ 
with $\delta\in(0,\infty)$. 

The first result that we state in this section is borrowed from \cite{MR3060421}.
\begin{lem}\label{OCPR}
Let $0<p<\infty$, $0<\delta<1$ and let $w\in A_{\infty}$. Then
\[
\|f\|_{L^{p}(w)}\leq cp[w]_{A_{\infty}}\left\Vert M_{\delta}^{\sharp,d}f\right\Vert _{L^{p}(w)}
\] 
for any function $f$ such that $\left|\left\{ x\,:\,|f(x)|>t\right\} \right|<\infty$
for all $t>0$.
\end{lem}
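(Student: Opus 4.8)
The plan is to run a quantitative Fefferman--Stein argument: reduce the left-hand side to the dyadic maximal function, establish a good-$\lambda$ inequality relating $M_{\delta}^{d}f$ to $M_{\delta}^{\sharp,d}f$, and then pass to the weighted setting, tracking the dependence on $[w]_{A_{\infty}}$ through the optimal reverse H\"older inequality of Theorem~\ref{thm:SharpRHI}.

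First I would dispose of the left-hand side. Since $0<\delta<1$, the Lebesgue differentiation theorem along dyadic cubes gives $|f(x)|=(|f(x)|^{\delta})^{1/\delta}\le (M^{d}(|f|^{\delta})(x))^{1/\delta}=M_{\delta}^{d}f(x)$ for a.e.\ $x$, so it suffices to prove
\[
\|M_{\delta}^{d}f\|_{L^{p}(w)}\le c\,p\,[w]_{A_{\infty}}\,\|M_{\delta}^{\sharp,d}f\|_{L^{p}(w)}.
\]
The hypothesis that $\{x:|f(x)|>t\}$ has finite measure for every $t>0$ enters here and in the integration below: it guarantees that every level set $\Omega_{\lambda}=\{x:M_{\delta}^{d}f(x)>\lambda\}$ has finite Lebesgue measure and decomposes into the disjoint maximal dyadic cubes on which $\langle|f|^{\delta}\rangle>\lambda^{\delta}$, and it allows one to work first with a truncation of $M_{\delta}^{d}f$, so that its $L^{p}(w)$ norm is a priori finite, and remove the truncation at the end.

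The core step is the good-$\lambda$ inequality. Fix $\lambda>0$ and let $\{Q_{j}\}$ be the maximal dyadic cubes of $\Omega_{\lambda}$, so $\langle|f|^{\delta}\rangle_{\widehat{Q_{j}}}\le\lambda^{\delta}$ on the dyadic parent and, for $x\in Q_{j}$, $M^{d}(|f|^{\delta})(x)=M^{d,Q_{j}}(|f|^{\delta})(x)$. Hence for $a\ge 2$ the event $M_{\delta}^{d}f(x)>a\lambda$ forces $M^{d,Q_{j}}(|f|^{\delta}-\langle|f|^{\delta}\rangle_{\widehat{Q_{j}}})(x)>(a^{\delta}-1)\lambda^{\delta}$; applying the weak type $(1,1)$ bound for the local dyadic maximal operator, together with the identity $M^{\sharp,d}(|f|^{\delta})=(M_{\delta}^{\sharp,d}f)^{\delta}$, one gets, whenever the set on the left is nonempty,
\[
|\{x\in Q_{j}:M_{\delta}^{d}f(x)>a\lambda,\ M_{\delta}^{\sharp,d}f(x)\le\gamma\lambda\}|\le \frac{c_{n}\,\gamma^{\delta}}{a^{\delta}-1}\,|Q_{j}|,
\]
and a John--Nirenberg argument upgrades this polynomial gain to the exponential one $c_{n}e^{-c_{n}(a^{\delta}-1)/\gamma^{\delta}}|Q_{j}|$, which is what makes a non-exponential final constant attainable. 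Now Theorem~\ref{thm:SharpRHI} turns a Lebesgue-measure bound $|E\cap Q_{j}|\le\theta\,|Q_{j}|$ into $w(E\cap Q_{j})\le 2\,\theta^{\,1/r_{w}'}w(Q_{j})$ with $1/r_{w}'=(1+\tau_{n}[w]_{A_{\infty}})^{-1}$; summing over $j$ yields
\[
w(\{M_{\delta}^{d}f>a\lambda,\ M_{\delta}^{\sharp,d}f\le\gamma\lambda\})\le 2\bigl(c_{n}e^{-c_{n}(a^{\delta}-1)/\gamma^{\delta}}\bigr)^{1/(1+\tau_{n}[w]_{A_{\infty}})}w(\Omega_{\lambda}).
\]

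Finally I would integrate this against $p\lambda^{p-1}\,d\lambda$ in the usual layer-cake fashion, using $w(\Omega_{a\lambda})\le(\text{coeff})\,w(\Omega_{\lambda})+w(\{M_{\delta}^{\sharp,d}f>\gamma\lambda\})$, which gives $a^{-p}\|M_{\delta}^{d}f\|_{L^{p}(w)}^{p}\le(\text{coeff})\|M_{\delta}^{d}f\|_{L^{p}(w)}^{p}+\gamma^{-p}\|M_{\delta}^{\sharp,d}f\|_{L^{p}(w)}^{p}$, and then absorb the first term on the right. I expect the main obstacle to be precisely this absorption: the reverse-H\"older exponent $1/(1+\tau_{n}[w]_{A_{\infty}})$ degrades the good-$\lambda$ gain, so a fixed choice of $a$ and $\gamma$ produces a constant that is exponential in $p[w]_{A_{\infty}}$, and recovering the advertised bound $c\,p\,[w]_{A_{\infty}}$ demands a careful $p$- and $[w]_{A_{\infty}}$-dependent choice of the jump $a=1+O(1/p)$ and of $\gamma$ (with $\gamma^{\delta}$ comparable to $(p(1+\tau_{n}[w]_{A_{\infty}}))^{-1}$), in which the optimality of Theorem~\ref{thm:SharpRHI} is what allows the various powers to collapse; this is the computation carried out in \cite{MR3060421}. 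The remaining ingredients --- the weak type $(1,1)$ bound, the John--Nirenberg step, and the layer-cake manipulation --- are routine.
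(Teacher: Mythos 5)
The paper does not prove this lemma; it is imported verbatim from \cite{MR3060421} (``the first result \ldots\ is borrowed from \cite{MR3060421}''), so there is no internal argument to compare against. That said, your outline follows the natural good-$\lambda$ route, and the first three stages are correct in principle: the pointwise domination $|f|\le M_\delta^d f$, the (polynomial, then by John--Nirenberg exponential) good-$\lambda$ estimate carried out at the scale $g=|f|^\delta$, and the conversion $|E\cap Q_j|\le\theta|Q_j|\Rightarrow w(E\cap Q_j)\le 2\,\theta^{1/r_w'}w(Q_j)$ via Theorem~\ref{thm:SharpRHI}, with $1/r_w'=(1+\tau_n[w]_{A_\infty})^{-1}$.

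The gap is in the absorption, and your own parameter choice exposes it. After absorbing, the layer-cake argument gives $\|M_\delta^d f\|_{L^p(w)}\le 2^{1/p}\,a\,\gamma^{-1}\,\|M_\delta^{\sharp,d}f\|_{L^p(w)}$, so the surviving factor is $\gamma^{-1}$, not $\gamma^{-\delta}$. With $a=1+O(1/p)$ and $\gamma^\delta\simeq\bigl(p(1+\tau_n[w]_{A_\infty})\bigr)^{-1}$ as you prescribe, this is $\gamma^{-1}\simeq\bigl(p\,[w]_{A_\infty}\bigr)^{1/\delta}$, strictly worse than the advertised $c\,p\,[w]_{A_\infty}$ whenever $\delta<1$. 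The extra power $1/\delta$ is structural in your scheme: the exponential good-$\lambda$ obtained by running John--Nirenberg at the scale $|f|^\delta$ necessarily has $\gamma^\delta$ (and $a^\delta-1$) in the exponent, whereas the constant that must be controlled is $\gamma^{-1}$, and no reallocation between $a$ and $\gamma$ avoids this: the absorption constraint forces $\gamma^\delta\lesssim\bigl(p\,[w]_{A_\infty}\bigr)^{-1}$ and hence $\gamma^{-1}\gtrsim\bigl(p\,[w]_{A_\infty}\bigr)^{1/\delta}$. So the assertion that ``the optimality of Theorem~\ref{thm:SharpRHI} is what allows the various powers to collapse'' is not demonstrated and, as far as your computation goes, is false; to reach the linear bound $c\,p\,[w]_{A_\infty}$ one would need either an exponential good-$\lambda$ of the stronger form $e^{-c(a-1)/\gamma}$ directly in the $f$-variable (which the John--Nirenberg argument at the $|f|^\delta$ level does not yield), or a genuinely different mechanism, as in \cite{MR3060421}. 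The step you defer to that reference is precisely the one your sketch leaves open.
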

Using the preceding lemma and following the proof of Lemma 3.1 in \cite{MR3008263} we can derive the following improvement. 
\begin{lem}\label{Lema3Carmen}
Let $0<p<\infty$, $0<\varepsilon\leq1$ and $w\in A_{\infty}$. Suppose
that 
\[
\left|\left\{ x\,:\,|f(x)|>t\right\} \right|<\infty
\]
for all $t>0$. Then there is a constant $c=c_{n,\varepsilon}$ such
that 
\[
\left\Vert M_{\varepsilon}^{d}f\right\Vert _{L^{p}(w)}\leq cp[w]_{A_{\infty}}\left\Vert M_{\varepsilon}^{\sharp,d}f\right\Vert _{L^{p}(w)} 
\] 
\end{lem}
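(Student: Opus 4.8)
The plan is to deduce this from Lemma \ref{OCPR} (the $A_\infty$ version of the Fefferman–Stein inequality with the $M^{\sharp,d}_\delta$ operator on the right) by a bootstrapping argument, exactly in the spirit of Lemma 3.1 of \cite{MR3008263}. The key point is that $M^d_\varepsilon f$ is itself a function to which Lemma \ref{OCPR} can be applied, and then one must control $M^{\sharp,d}_\delta(M^d_\varepsilon f)$ by $M^{\sharp,d}_\varepsilon f$.

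\textbf{Step 1: reduce to the boundedness of a dyadic sharp maximal function.} Apply Lemma \ref{OCPR} to the function $g=M^d_\varepsilon f$, with some auxiliary exponent $0<\delta<\min(1,\varepsilon)$ (any such $\delta$ works, say $\delta=\varepsilon/2$). First one checks the distributional finiteness hypothesis: since $|\{|f|>t\}|<\infty$ for all $t>0$, the weak $(1,1)$ bound for $M$ (applied to $|f|^\varepsilon$, which lies in $L^{1/\varepsilon,\infty}$ when $f$ has finite level sets — more precisely one argues through the standard truncation/layer-cake estimate) gives $|\{M^d_\varepsilon f>t\}|<\infty$ for all $t>0$. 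Then Lemma \ref{OCPR} yields
\[
\|M^d_\varepsilon f\|_{L^p(w)}\le cp[w]_{A_\infty}\big\|M^{\sharp,d}_\delta(M^d_\varepsilon f)\big\|_{L^p(w)}.
\]

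\textbf{Step 2: the pointwise estimate $M^{\sharp,d}_\delta(M^d_\varepsilon f)(x)\le c\, M^{\sharp,d}_\varepsilon f(x)$.} This is the heart of the matter and is a purely pointwise, dyadic fact. Fix a dyadic cube $Q\ni x$. One wants to bound $\big(\frac{1}{|Q|}\int_Q \big|(M^d_\varepsilon f)^\delta - c_Q\big|\big)^{1/\delta}$ for a well-chosen constant $c_Q$; the natural choice is $c_Q=\big(\inf_Q M^d_\varepsilon f_{\text{outside}}\big)^\delta$ or, more standardly, one splits $f=f\chi_{3Q}+f\chi_{(3Q)^c}$ and uses that on $Q$ the contribution of the far part $M^d_\varepsilon(f\chi_{(3Q)^c})$ is essentially constant (it varies by a controlled amount measured by $M^{\sharp}$-type quantities), while the local part is handled by Kolmogorov's inequality: for $\delta<\varepsilon$,
\[
\Big(\frac{1}{|Q|}\int_Q \big(M^d_\varepsilon(f\chi_{3Q})\big)^\delta\Big)^{1/\delta}\le c\,\big(\tfrac{1}{|Q|}\|M_\varepsilon(f\chi_{3Q})\|^{\delta}_{L^{?},\,\ldots}\big)\le c\, \Big(\frac{1}{|3Q|}\int_{3Q}|f|^\varepsilon\Big)^{1/\varepsilon},
\]
using that $M_\varepsilon$ maps $L^{1}$ into $L^{1,\infty}$-type spaces appropriately. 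Finally one observes $\big(\frac{1}{|3Q|}\int_{3Q}|f|^\varepsilon\big)^{1/\varepsilon}\le c\,\big(M^{\sharp,d}_\varepsilon f(x) + \inf_{3Q}M^d_\varepsilon f\big)$-type bound, which after taking the supremum over $Q\ni x$ collapses to $c\,M^{\sharp,d}_\varepsilon f(x)$ because the constant pieces cancel against $c_Q$. (This is exactly the mechanism of the classical Fefferman–Stein lemma $\|M_\delta f\|_{L^p}\le c\|M^\sharp_\delta f\|_{L^p}$, carried out in the dyadic setting.)

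\textbf{Step 3: conclude.} Insert the pointwise bound from Step 2 into the inequality from Step 1:
\[
\|M^d_\varepsilon f\|_{L^p(w)}\le cp[w]_{A_\infty}\big\|M^{\sharp,d}_\varepsilon f\big\|_{L^p(w)},
\]
with $c=c_{n,\varepsilon}$ as claimed (the $\varepsilon$-dependence of the constant enters through the Kolmogorov inequality in Step 2 and the choice $\delta=\varepsilon/2$). This is the stated inequality.

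\textbf{Main obstacle.} The delicate point is Step 2, specifically making the far-part $M^d_\varepsilon(f\chi_{(3Q)^c})$ argument work \emph{dyadically} and with the correct constant $c_Q$: one must be careful that for dyadic cubes the truncation is compatible with the dyadic structure, and that the comparison of $M^d_\varepsilon(f\chi_{(3Q)^c})$ on $Q$ with its infimum produces a term genuinely controlled by $M^{\sharp,d}_\varepsilon f(x)$ rather than by something larger. The second subtlety is bookkeeping the $\varepsilon$ versus $\delta$ exponents so that Kolmogorov's inequality (valid precisely because $\delta<\varepsilon$) can be applied; choosing $\delta=\varepsilon/2$ and tracking the resulting constant is routine but must be done with care. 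Everything else — the finiteness of level sets, the invocation of Lemma \ref{OCPR}, the final substitution — is mechanical.
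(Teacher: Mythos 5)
Your proposal matches the paper's proof exactly: apply Lemma \ref{OCPR} to $g = M^d_\varepsilon f$ with an auxiliary exponent $0 < \delta < \varepsilon$, and then reduce everything to the pointwise inequality $M^{\sharp,d}_{\delta}\bigl(M^d_\varepsilon f\bigr)(x) \le c\, M^{\sharp,d}_\varepsilon f(x)$. The only difference is that the paper simply cites this pointwise estimate to Lemma~3.1 of \cite{MR3008263}, whereas you sketch the underlying local/far decomposition and Kolmogorov argument (which is precisely the mechanism behind that reference, modulo subtracting $f_Q$ from the local piece so that it is genuinely controlled by $M^{\sharp,d}_\varepsilon f$ rather than by $M^d_\varepsilon f$).
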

\begin{proof}
Applying previous lemma with $\delta=\varepsilon_{0}$ and $0<\varepsilon_{0}<\varepsilon<1$
\[
\left\Vert M_{\varepsilon}^{d}f\right\Vert _{L^{p}(w)}\leq cp[w]_{A_{\infty}}\left\Vert M_{\varepsilon_{0}}^{\sharp,d}\left(M_{\varepsilon}^{d}f\right)\right\Vert _{L^{p}(w)}.
\]
Now it suffices to prove that 
\[
M_{\varepsilon_{0}}^{\sharp,d}\left(M_{\varepsilon}^{d}f\right)(x)\leq cM_{\varepsilon}^{\sharp,d}f(x).
\]
But this was done in Lemma 3.1 of \cite{MR3008263}.\end{proof}

The reason why it is important to deal with $M_{\varepsilon}^{\sharp}$ for small $\varepsilon$ will be clear after the following pointwise estimate proved in \cite{MR1895740}.

\begin{lem}
\label{MsharpPuntual}Let
$T_{\vec{b}}$  be the symbol-multilinear
commutator defined above and let \,$0<\delta<\varepsilon<1$.
Then there exists a constant $c>0$, depending only on $\delta$ and
$\varepsilon$ such that 
\[
M_{\delta}^{\sharp}\left(T_{\vec{b}}f\right)(x)\leq c_{\delta,\varepsilon}\left(\|\vec{b}\|M_{L\left(\log L\right)^{\frac{1}{s}}} (f) + \sum_{j=1}^{k}\sum_{\sigma \in C_{j}^{k}}\|\vec{\sigma}\|M_{\varepsilon}\left(T_{\vec{b_{\sigma'}}}f\right)(x)\right)
\]
for any bounded function $f$ with compact support.

\end{lem}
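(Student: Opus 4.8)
The plan is to estimate, for a fixed cube $Q$ containing the point $x$, the average $\left(\frac{1}{|Q|}\int_{Q}\left|\,|T_{\vec{b}}f|^{\delta}-c_{Q}\right|\,\right)^{1/\delta}$ for a suitable constant $c_{Q}$ depending on $Q$, and then take the supremum over $Q\ni x$. First I would reduce to the case where all symbols are normalized, $\|b_{i}\|_{Osc_{\exp L^{s_{i}}}}=1$, by homogeneity, so that $\|\vec{b}\|=1$ and $\|\vec\sigma\|=1$; the general statement follows by scaling each $b_i$. Next, the key algebraic step: writing $b_{i}(y)-b_{i}(x)=(b_{i}(y)-\lambda_{i})-(b_{i}(x)-\lambda_{i})$ with $\lambda_{i}=(b_{i})_{Q}$ the average of $b_i$ over a suitable dilate of $Q$, I would expand the product $\prod_{i=1}^{k}(b_{i}(x)-b_{i}(y))$ using the splitting notation $b=\sigma\cup\sigma'$ already introduced in the excerpt. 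This yields
\[
T_{\vec b}f(y)=\sum_{j=0}^{k}\sum_{\sigma\in C_{j}^{k}}(-1)^{k-j}\,(b(y)-\lambda)_{\sigma'}\;(b(x)-\lambda)_{\sigma}\;T_{\vec{\sigma}}\big((b(\cdot)-\lambda)_{\sigma'}? \big)\dots
\]
— more precisely, after splitting $f=f\chi_{3Q}+f\chi_{(3Q)^{c}}$ one reorganizes so that the term with $\sigma=\emptyset$ is the ``main'' term $(b(y)-\lambda)_{b}\,Tf$-type expression, the terms with $1\le j\le k$ produce factors $(b(x)-\lambda)_{\sigma}$ times $T_{\vec{\sigma'}}f$, and there is a remainder coming from the far part $(3Q)^{c}$. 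I would choose $c_{Q}$ to be (the $\delta$-th power of) the value at $y$ away from $Q$ of the $\sigma=\emptyset$ far-away term, namely $c_Q = |(b(x)-\lambda)_b\, T(f\chi_{(3Q)^c})(x)|^\delta$ or a similar center.

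The estimates then split into three families of terms. \emph{(i) The local main term}: $\frac{1}{|Q|}\int_{Q}|(b(y)-\lambda)_{b}\,T(f\chi_{3Q})(y)|^{\delta}dy$. Here I would apply Kolmogorov's inequality (using the weak $(1,1)$ bound for $T$) to pull the $L^\delta$-average of $T(f\chi_{3Q})$ down to an $L^1$-type average of $|f|\,\chi_{3Q}$, then absorb the oscillation factors $|b_i(y)-\lambda_i|$ via the generalized Hölder inequality (Corollary \ref{CorolarioInv}): the $k$ factors $\|b_i-\lambda_i\|_{\exp L^{s_i},3Q}\le c$ (John-Nirenberg, i.e. the definition of $Osc_{\exp L^{s_i}}$) pair against $\|f\|_{L(\log L)^{1/s},3Q}$, producing $c\,M_{L(\log L)^{1/s}}f(x)$. \emph{(ii) The commutator-type local terms}: for $1\le j\le k$ and $\sigma\in C_j^k$ one gets $|(b(x)-\lambda)_{\sigma}|$ times $\left(\frac{1}{|Q|}\int_{Q}|T_{\vec{\sigma'}}(f\chi_{3Q})(y)|^{\delta}\right)^{1/\delta}$, together with a term of the same shape for the far part. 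The local piece is controlled, again by Kolmogorov (now using that $T_{\vec{\sigma'}}$ is of weak type... — here one uses instead that it is bounded, say, on $L^{\varepsilon'}$ or passes through $M_\varepsilon(T_{\vec{\sigma'}}f)$) by $M_{\varepsilon}(T_{\vec{\sigma'}}f)(x)$; and the factor $|(b(x)-\lambda)_{\sigma}|=\prod_{i\in\sigma}|b_i(x)-(b_i)_Q|$ is handled by the standard logarithmic growth of BMO-type averages across dilated cubes, which only costs a dimensional constant once everything is inside a single $\sup_{Q\ni x}$. This produces $\sum_{j=1}^{k}\sum_{\sigma\in C_j^k}M_{\varepsilon}(T_{\vec{\sigma'}}f)(x)$, reinstating $\|\vec\sigma\|$ after undoing the normalization. \emph{(iii) The far tail terms}: differences $|T(f\chi_{(3Q)^c})(y)-T(f\chi_{(3Q)^c})(z)|$ for $y,z\in Q$, or the corresponding $T_{\vec{\sigma'}}$ versions, are estimated using the kernel regularity (Hörmander-type) condition together with the oscillation of the symbols over the annuli $2^{k+1}Q\setminus 2^kQ$; each such term is dominated by $c\,M_{L(\log L)^{1/s}}f(x)$ (or by $M_\varepsilon(T_{\vec{\sigma'}}f)(x)$ for the $\sigma\ne\emptyset$ tails) after summing the geometric series in $k$.

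The main obstacle I anticipate is the bookkeeping in step (ii)-(iii): correctly organizing the multilinear expansion so that every term of type $T_{\vec{\sigma'}}$ appears with the right symbol factor $(b(x)-\lambda)_\sigma$, and making sure the far-tail pieces attached to those terms are absorbable — one must be careful that for $\sigma'\subsetneq b$ the operator $T_{\vec{\sigma'}}$ is itself a commutator of lower order, so its tail estimate again produces lower-order oscillation factors and a convergent series, but the combinatorics of which subsets contribute must be tracked precisely. The role of $0<\delta<\varepsilon<1$ is exactly to make Kolmogorov's inequality applicable at the endpoint (the commutators are not weak $(1,1)$, so one cannot take $\delta=1$) while keeping $M_\varepsilon(T_{\vec{\sigma'}}f)$ finite; choosing $\delta$ small and $\varepsilon$ strictly between $\delta$ and $1$ gives the room needed. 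Since this is precisely the pointwise estimate proved in \cite{MR1895740}, I would at this level of detail simply invoke that reference for the full verification of the routine Orlicz-Hölder and kernel computations, having indicated the structure above.
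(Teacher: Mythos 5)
The paper does not prove this lemma; it is stated as imported from \cite{MR1895740} (with the $k=0$ case attributed to \cite{MR1273194} and $k=1$ to \cite{MR1317714}), so there is no in-paper proof to compare against, and your choice to sketch the structure and then invoke that reference is consistent with what the paper does. Your outline does reproduce the strategy of the cited reference: normalize $\|\vec b\|=1$, expand $\prod_i(b_i(y)-b_i(z))$ about $\lambda_i=(b_i)_{3Q}$, split $f=f\chi_{3Q}+f\chi_{(3Q)^c}$, control the local pieces by Kolmogorov's inequality together with the generalized H\"older inequality (Corollary~\ref{CorolarioInv}), and control the tails by the H\"ormander smoothness of $K$; and your explanation of why $0<\delta<\varepsilon<1$ is needed (no weak $(1,1)$ for commutators, so Kolmogorov must run at the $L^\delta$ level, with room left for $M_\varepsilon$ on the lower-order pieces) is correct. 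Two points you would need to tighten to turn the sketch into a proof. First, the raw multilinear expansion gives terms of the form $(b(y)-\lambda)_\sigma\,T\big((b-\lambda)_{\sigma'}f\big)(y)$, not $(b(y)-\lambda)_\sigma\,T_{\vec{b_{\sigma'}}}f(y)$; landing exactly on the operators $T_{\vec{b_{\sigma'}}}$ in the statement requires the further combinatorial re-expansion that the paper itself performs after display~\eqref{Tvecb} in the proof of Theorem~\ref{Extremo}, and the question mark in your displayed identity is precisely where that step is missing. Second, the centering constant $c_Q$ should be (a $\delta$-power of) the value at a fixed point of the $\sigma'=b$ far tail, i.e.\ $T\big((b-\lambda)_b\,f\chi_{(3Q)^c}\big)(x_Q)$, rather than $(b(x)-\lambda)_b\,T(f\chi_{(3Q)^c})(x)$ as you wrote; your choice still carries the oscillating factor $(b-\lambda)_b$ and is not the quantity that the smoothness estimate makes nearly constant on $Q$.
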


 The first result, corresponding to the case  $k=0$, namely 
\begin{equation}\label{AP}
M_{\delta}^{\sharp}(Tf)(x)\leq c_{\delta}\,Mf(x) \qquad 0<\delta<1
\end{equation}
can be found in \cite{MR1273194} and the case $k=1$ was established in \cite{MR1317714}.

\section{Proof of Theorem \ref{TipoFuerte}}\label{PTF}

In this section we prove Theorem \ref{TipoFuerte}. The first two subsections will be devoted  to the core of the proof for all the cases, namely $k=1$ and $k>1$. Both cases rely upon a two weight inequality that will be  established in the third subsection. A careful control of some Young's functions inverses will be required to obtain that two weight inequality.

\subsection{Case $k=1$}
\begin{proof}
In this proof we follow techniques in \cite{MR2480568} and \cite{MR3008263}. Let us call $v=M_{L(\log L)^{\left(1+\frac{1}{s}\right)p-1+\delta}}w$.
If $\kappa=c_{T}\left(p'\right)^{2}p^{1+\frac{1}{s}}\left(\frac{p-1}{\delta}\right)^{\frac{1}{p'}}$,
by duality, it suffices to show that 
\[
\left\Vert \frac{[b,T]^{t}f}{v}\right\Vert _{L^{p'}(v)}\leq\kappa\left\Vert \frac{f}{w}\right\Vert _{L^{p'}(w)}
\]
Where $[b,T]^{t}$ is the adjoint of $[b,T]$. Calculating the norm
by duality allows us to find a  non-negative function  $h\in L^{p}(v)$ with $\|h\|_{L^{p}(v)}=1$
such that 
\[
\left\Vert \frac{[b,T]^{t}f}{v}\right\Vert _{L^{p'}(v)}=\int_{\mathbb{R}^{n}}\frac{ |[b,T]^{t}f| }{v}\,hvdx=\int_{\mathbb{R}^{n}} |[b,T]^{t}| f\,hdx=I.
\]
Let us consider the operator 
\[
S(h)=\frac{M\left(hv^{\frac{1}{p}}\right)}{v^{\frac{1}{p}}}.
\]
We build the Rubio de Francia algorithm $R$ using the operator $S$.
\[
R(h)=\sum_{k=0}^{\infty}\frac{1}{2^{k}}\frac{S^{k}(h)}{\|S\|_{L^{p}(v)}^{k}}.
\]
$R$ satisfies the following properties:
\begin{enumerate}
\item $0\leq h\leq R(h)$
\item $\|R(h)\|_{L^{p}(v)}\leq2\|h\|_{L^{p}(v)}$
\item $R(h)v{}^{\frac{1}{p}}\in A_{1}$ and furthermore $\left[R(h)v{}^{\frac{1}{p}}\right]_{A_{1}}\leq cp'$.
\end{enumerate}

Using Lemma 4.2 in \cite{MR3327006}
\[
\left[v{}^{\frac{1}{2p}}\right]_{A_{1}}^{2}\leq c_{n}
\]
 Taking that into account 
\[
[Rh]_{A_{3}}=\left[R(h)v{}^{\frac{1}{p}}\left(v^{-\frac{1}{p(1-q)}}\right)^{1-q}\right]_{A_{q}}\leq\left[R(h)v{}^{\frac{1}{p}}\right]_{A_{1}}\left[v{}^{\frac{1}{2p}}\right]_{A_{1}}^{2}\leq c_{n}p'
\]

Applying Lemma \ref{OCPR}, with $p=1$, $w=Rh$ and $\gamma\in(0,1)$, together with 
$[Rh]_{A_{\infty}}\leq[Rh]_{A_{3}}\leq c_{n}p'$, we have 
\[
\begin{split}I & \leq\int_{\mathbb{R}^{n}}\left|[b,T]^{t}f\right|Rhdx\\
 & \leq c_{n}[Rh]_{A_{\infty}}\int_{\mathbb{R}^{n}}M_{\gamma}^{\sharp}([b,T]^{t}f)Rh(x)dx\leq c_{n}p'\int_{\mathbb{R}^{n}}M_{\gamma}^{\sharp}([b,T]^{t}f)Rh(x)dx
\end{split}
\]
Now we observe that $[b,T]^{t}=-\left[b,T^{t}\right]$. Consequently,
$[b,T]^{t}$ is a commutator and if we choose $\varepsilon\in(\delta,1)$ in Lemma \ref{MsharpPuntual} we can continue with 
\[
\begin{split} 
 & \leq c_{n}p'\|b\|_{Osc_{expL^{s}}}\left[\int_{\mathbb{R}^{n}}M_{L(\log L)^{\frac{1}{s}}}f(x)Rh(x)dx+\int_{\mathbb{R}^{n}}M_{\varepsilon}\left(T^{t}f\right)(x)Rh(x)dx\right]\\
 & =c_{n}p'\|b\|_{Osc_{expL^{s}}}(I_{1}+I_{2})
\end{split}
\]
To estimate $I_{1}$ we use  H\"older's inequality and the second property of the operator $R$
\[
\begin{split}I_{1} & =\int_{\mathbb{R}^{n}} M_{L(\log L)^{\frac{1}{s}} } f(x)Rh(x)dx\leq\left(\int_{\mathbb{R}^{n}} M_{L(\log L)^{\frac{1}{s}} } f(x)^{p'}v(x)^{1-p'}dx\right)^{\frac{1}{p'}}\left(\int_{\mathbb{R}^{n}}Rh(x)^{p}v(x)dx\right)^{ \frac1p } \\
 & \leq2\left\Vert \frac{M_{L(\log L)^{\frac{1}{s}}}f}{v}\right\Vert _{L^{p'}(v)}
\end{split}
\]
To bound $I_{2}$ we apply lemma \ref{Lema3Carmen}
with $w=Rh$ and $p=1$
\[
I_{2}\leq c_{n}[Rh]_{A_{ \infty} }\int_{\mathbb{R}^{n}}M_{\varepsilon}^{\sharp}(T^{t}f)(x)Rh(x)dx\leq c_{n}p'\int_{\mathbb{R}^{n}}M_{\varepsilon}^{\sharp}(T^{t}f)(x)Rh(x)dx.
\]
Using now \eqref{AP} for the adjoint of $T$, namely,  $M_{\varepsilon}^{\sharp}(T^{t}f)\leq c_{\varepsilon}\,Mf$, we have
\[
I_{2} \leq c_{n,\varepsilon}p'\int_{\mathbb{R}^{n}}MfRh.
\]
Proceeding now as we did for $I_{1}$ we derive to
\[
I_{2}\leq c_{n}p'\left\Vert \frac{Mf}{v}\right\Vert _{L^{p'}(v)}.
\]
Consequently 
\[
\left\Vert \frac{[b,T]^{t}f}{v}\right\Vert _{L^{p'}(v)}\leq c_{n}\left(p'\right)^{2}\|b\|_{Osc_{expL^{s}}}\left\Vert \frac{M_{L(\log L)^{\frac{1}{s}}}f}{v}\right\Vert _{L^{p'}(v)},
\]
and recalling that  $v=M_{L(\log L)^{\left(1+\frac{1}{s}\right)p-1+\delta}}w$, everything is reduced to establish the following inequality,
\begin{equation}
\left\Vert \frac{M_{L(\log L)^{\frac{1}{s}}}f}{v}\right\Vert _{L^{p'}(v)}\leq c_{n}p^{1+\frac{1}{s}}\left(\frac{p-1}{\delta}\right)^{\frac{1}{p'}}\left\Vert \frac{f}{w}\right\Vert _{L^{p'}(w)}
\end{equation}
which will be proved in Lemma \ref{lem:2Weight} below. This concludes the proof of the Theorem in the case $k=1$. 

\end{proof}

\subsection{Case $k>1$}
\begin{proof}
Due to the homogeneity of the operator we may assume that 
\[
\|b_1\|_{Osc_{expL^{s_{1}}}}=\|b_2\|_{Osc_{expL^{s_{2}}}}=\dots=\|b_k\|_{Osc_{expL^{s_{k}}}}=1
\]
Let us denote $v=M_{L(\log L)^{\left(1+\frac{1}{s}\right)p-1+\delta}}w$.
If $\kappa=c_{T}\left(p'\right)^{k+1}p^{1+\frac{1}{s}}\left(\frac{p-1}{\delta}\right)^{\frac{1}{p'}}$,
by duality, it suffices to show that 
\[
\left\Vert \frac{T_{\vec{b}}^{t}f}{v}\right\Vert _{L^{p'}(v)}\leq\kappa\left\Vert \frac{f}{w}\right\Vert _{L^{p'}(w)},
\]
where $T_{\vec{b}}^{t}$ is the adjoint of $T_{\vec{b}}$. Using duality we can find a non-negative function $h\in L^{p}(v)$ with $\|h\|_{L^{p}(v)}=1$ such that 
\[
\left\Vert \frac{T_{\vec{b}}^{t}f}{v}\right\Vert _{L^{p'}(v)}= \int_{\mathbb{R}^{n}}\frac{|T_{\vec{b}}^{t}f|}{v}\,hvdx=
\int_{\mathbb{R}^{n}}|T_{\vec{b}}^{t}f|\,hdx=I.
\]
As in the case $k=1$ we use again Lemma \ref{OCPR} with $p=1$, $w=Rh$ and $\gamma\in (0,1)$. Hence, since $[Rh]_{A_{\infty}}\leq[Rh]_{A_{3}}\leq c_{n}p'$, we have
\[
\begin{split}I & \leq\int_{\mathbb{R}^{n}}\left|T_{\vec{b}}^{t}f\right| \,Rhdx\\
 & \leq c_{n}[Rh]_{A_{\infty}}\int_{\mathbb{R}^{n}}M_{\gamma}^{\sharp}(T_{\vec{b}}^{t}f)\,Rhdx\leq c_{n}p'\int_{\mathbb{R}^{n}}M_{\gamma}^{\sharp}(T_{\vec{b}}^{t}f)\,Rhdx
\end{split}
\]
 $T_{\vec{b}}^{t}f$ is a commutator so if we take $\varepsilon\in(\gamma,1)$ Lemma \ref{MsharpPuntual} yields
\[
\begin{split} & c_{n}p'\int_{\mathbb{R}^{n}} M_{\gamma}^{\sharp}\left(T_{\vec{b}}^{t}f\right) \,Rh(x)dx\\
 & \leq c_{n}p'\left[\int_{\mathbb{R}^{n}}M_{L(\log L)^{\frac{1}{s}}}f(x)\,Rhdx+\sum_{j=1}^{k}\sum_{\sigma\in C_{j}^{k}}\int_{\mathbb{R}^{n}}M_{\varepsilon}\left(T_{\vec{\sigma'}}^tf\right)\,Rhdx\right]\\
 & =c_{n}p'(I_{1}+I_{2})
\end{split}
\]
Now we have to estimate $I_{1}$ and $I_{2}$. For $I_{1}$ we proceed as in the case $k=1$ obtaining 
\[
I_{1}\leq2\left\Vert \frac{M_{L(\log L)^{\frac{1}{s}}}f}{v}\right\Vert _{L^{p'}(v)}
\]
To estimate $I_{2}$ we need to control each term of the sum.
%
To accomplish this we claim that for every $\varepsilon\in(0,1)$:
\begin{equation}\label{claim}
\int_{\mathbb{R}^{n}}M_{\varepsilon}\left(T_{\vec{\sigma'}}^{t}f\right)(x)Rh(x)dx\leq c \Gamma(\#\sigma')(p')^{\#\sigma'+1}\int_{\mathbb{R}^{n}}M_{L(\log L)^{\sum_{i\in\sigma'}\frac{1}{s_{i}}}}f(x)Rh(x)dx
\end{equation}
where 
\[
\Gamma(j)=\begin{cases}
1 & \quad j=0\\
2 & \quad j=1\\
2+\sum_{i=1}^{j-1}\binom{j}{i}\Gamma(i) & \quad j>1
\end{cases}
\]
which will be proved by induction on the number of symbols of $T_{\vec{\sigma'}}^t$, i.e., $\#\sigma'$. For simplicity with the notation, we prove the claim for $T_{\vec{\sigma'}}$ instead of $T_{\vec{\sigma'}}^t$. We can do this since both them are commutators with the same number of symbols.
Let us call $m=\#\sigma'$. If the number of symbols is zero $T_{\vec{\sigma'}}=T$ and then combining Lemma \ref{Lema3Carmen} and \eqref{AP} we obtain
\[
\int_{\mathbb{R}^{n}}M_{\varepsilon}\left(Tf\right)(x)Rh(x)dx\leq cp'\int_{\mathbb{R}^{n}}Mf(x)Rh(x)dx
\]
%
since we assume $\sum_{i\in\emptyset}\frac{1}{s_{i}}=0$. 
If $m=1$, then $T_{\vec{\sigma'}}=[b_{1},T]$.
Applying Lemma \ref{Lema3Carmen} with $w=Rh$, $p=1$
we have that 
\[
\begin{split}\int_{\mathbb{R}^{n}}M_{\varepsilon}\left([b_{1},T]f\right)(x)\,Rhdx & \leq c_{n}[Rh]_{A_{3}}\int_{\mathbb{R}^{n}}M_{\varepsilon}^{\sharp}([b_{1},T]f)\,Rhdx\\
 & \leq c_{n}p'\int_{\mathbb{R}^{n}}M_{\varepsilon}^{\sharp}([b_{1},T]f)\,Rhdx
\end{split}
\]
Now, if we take $0<\varepsilon<\tilde{\varepsilon}<1$,  Lemma \ref{MsharpPuntual} produces the following bound of the last expression
\[
\begin{split} 
 & \leq cp'\int_{\mathbb{R}^{n}}M_{L(\log L)^{\frac{1}{s_{1}}}}f(x)Rh(x)dx+cp'\int_{\mathbb{R}^{n}}M_{\tilde{\varepsilon}}(Tf(x))Rh(x)dx\\
 & \leq cp'\int_{\mathbb{R}^{n}}M_{L(\log L)^{\frac{1}{s_{1}}}}f(x)Rh(x)dx+c(p')^{2}\int_{\mathbb{R}^{n}}Mf(x)Rh(x)dx\\
 & \leq 2c(p')^{2}\int_{\mathbb{R}^{n}}M_{L(\log L)^{\frac{1}{s_{1}}}}f(x)Rh(x)dx.
\end{split}
\]
This proves claim \eqref{claim} in the case $m=1$. Let us suppose now that the result holds for $0\leq l<m$ symbols, 
namely, if $0\leq\#\tau<m$, then for every $\varepsilon\in(0,1)$
\[
\int_{\mathbb{R}^{n}}M_{\varepsilon}\left(T_{\vec{\tau}}f\right)(x)\,Rhdx\leq c\Gamma(\#\tau)(p')^{\#\tau+1}\int_{\mathbb{R}^{n}}M_{L(\log L)^{\sum_{i\in\tau}\frac{1}{s_{i}}}}f(x)\,Rhdx
\]
Combining Lemma \ref{Lema3Carmen}, with $w=Rh$ and $p=1$, 
and Lemma \ref{MsharpPuntual}  we have for $\varepsilon<\tilde{\varepsilon}<1$,
\[
\begin{split} & \int_{\mathbb{R}^{n}}M_{\varepsilon}\left(T_{\vec{\sigma'}}f\right)\,Rhdx\\
 & \leq cp'\int_{\mathbb{R}^{n}} M_{L(\log L)^{\sum_{_{i\in\sigma'}}\frac{1}{s_{i}  }}}f\,Rhdx+\sum_{j=1}^{m}\sum_{\tau\in C_{j}^{m}} 
 cp'\int_{\mathbb{R}^{n}}M_{\tilde{\varepsilon}}\left(T_{\vec{b_{\tau'}}}f\right)\,Rhdx\\
 & \leq cp'\int_{\mathbb{R}^{n}}M_{L(\log L)^{\sum_{_{i\in\sigma'}}\frac{1}{s_{i}}}}f\,Rhdx
 +\sum_{j=1}^{k}\sum_{\tau\in C_{j}^{k}}cp'\int_{\mathbb{R}^{n}}M_{\tilde{\varepsilon}}\left(T_{\vec{b_{\tau'}}}f\right)\,Rhdx.
\end{split}
\]
Using now the induction hypothesis we continue with
\[
\begin{split} 
 & \leq cp'\int_{\mathbb{R}^{n}}M_{L(\log L)^{\sum_{_{i\in\sigma'}}\frac{1}{s_{i}}}}f(x)\,Rhdx+\sum_{j=1}^{m}\sum_{\tau\in C_{j}^{m}}\Gamma(\#\tau')c(p')^{\#\tau'+1}\int_{\mathbb{R}^{n}}M_{L(\log L)^{\sum_{i\in\tau'}\frac{1}{s_{i}}}}f(x)\,Rhdx\\
 & \leq cp'\int_{\mathbb{R}^{n}}M_{L(\log L)^{\sum_{_{i\in\sigma' }}\frac{1}{s_{i}}}}f(x)\,Rhdx+\left(\sum_{j=1}^{m}\sum_{\tau\in C_{j}^{m}}\Gamma(\#\tau')\right)(p')^{m+1}\int_{\mathbb{R}^{n}}M_{L(\log L)^{\sum_{i\in\sigma'}\frac{1}{s_{i}}}}f(x)\,Rhdx\\
 & \leq \left(1 + \sum_{j=1}^{m}\sum_{\tau\in C_{j}^{m}}\Gamma(\#\tau')\right) c(p')^{k+1}\int_{\mathbb{R}^{n}}M_{L(\log L)^{\sum_{i\in\sigma'}\frac{1}{s_{i}}}}f(x)\,Rhdx.
\end{split}
\]
It's easy to check that \[1 + \sum_{j=1}^{m}\sum_{\tau\in C_{j}^{m}}\Gamma(\#\tau')=\Gamma(m)\]
Then the main claim \eqref{claim} is proved in full generality.  This yields, combining estimates 
\[
I_{2} \leq c_{n,k,\delta,\varepsilon}\left(p'\right)^{k+1}\int_{\mathbb{R}^{n}}M_{L(\log L)^{\frac{1}{s}}}f(x)\,Rhdx.
\]
Proceeding as we did for $I_{1}$ we obtain the following estimate
\[
I_{2}\leq c_{n,\delta,\varepsilon}\left(p'\right)^{k+1}\left\Vert \frac{M_{L(\log L)^{\frac{1}{s}}}f}{v}\right\Vert _{L^{p'}(v)}.
\]
Consequently 
\[
\left\Vert \frac{T_{\vec{b}}^{t}f}{v}\right\Vert _{L^{p'}(v)}\leq c_{n}\left(p'\right)^{k+1} \left\Vert \frac{M_{L(\log L)^{\frac{1}{s}}}f}{v}\right\Vert _{L^{p'}(v)}.
\]
This concludes the proof since by Lemma \ref{lem:2Weight} 
\[
\left\Vert \frac{M_{L(\log L)^{\frac{1}{s}}}f}{v}\right\Vert _{L^{p'}(v)}\leq cp^{1+\frac{1}{s}}\left(\frac{p-1}{\delta}\right)^{\frac{1}{p'}}\left\Vert \frac{f}{w}\right\Vert _{L^{p'}(w)}
\]
 that $v=M_{L(\log L)^{\left(1+\frac{1}{s}\right)p-1+\delta}}w$.
\end{proof}
\subsection{A key two weight inequality}

As we already mentioned, we prove the following lemma that was used several times during the proof of Theorem \ref{TipoFuerte}.

\begin{lem}\label{lem:2Weight}
Let $w\geq 0$ be a weight. Let $s \geq  1$ and $0<\delta<1$. Then for every $p\in(1,\infty)$ we have that 
\begin{equation}\label{claimSpecial2weight}
\left\Vert \frac{M_{L(\log L)^{\frac{1}{s}}}f}{v}\right\Vert _{L^{p'}(v)}\leq c p^{1+\frac{1}{s}}\left(\frac{p-1}{\delta}\right)^{\frac{1}{p'}}\left\Vert \frac{f}{w}\right\Vert _{L^{p'}(w)}.
\end{equation}
where $v=M_{L(\log L)^{\left(1+\frac{1}{s}\right)p-1+\delta}}w$.
\end{lem}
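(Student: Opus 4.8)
The plan is to prove the two-weight inequality \eqref{claimSpecial2weight} by a duality argument combined with the standard machinery for Orlicz maximal functions: generalized H\"older's inequality, the duality pairing between $L(\log L)^{\rho}$ and $\exp L^{1/\rho}$, and a careful bookkeeping of the constants so that the correct powers $p^{1+\frac1s}$ and $\left(\tfrac{p-1}{\delta}\right)^{1/p'}$ emerge. First I would dualize: by duality of $L^{p'}(v)$ it suffices to bound $\int_{\mathbb{R}^n} M_{L(\log L)^{1/s}}f(x)\, g(x)\, dx$ against $c\,p^{1+\frac1s}\left(\tfrac{p-1}{\delta}\right)^{1/p'}\left\|\tfrac{f}{w}\right\|_{L^{p'}(w)}\,\|g\|_{L^{p}(v^{1-p})\cdot v}$ for nonnegative $g$, equivalently, writing $g=h v^{1/p}$ with $\|h\|_{L^p}$ normalized, to bound $\int M_{L(\log L)^{1/s}}f \cdot h\, v^{1/p}$. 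The cleanest route is to linearize the Orlicz maximal function: for each point pick a cube $Q_x$ nearly realizing the supremum, pass to a sparse/disjointified subfamily $\{Q_j\}$ via a Calder\'on--Zygmund stopping-time argument, and reduce to estimating $\sum_j \|f\|_{L(\log L)^{1/s},Q_j}\, \big(\int_{Q_j} h v^{1/p}\big)$.

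The core of the argument is then a pointwise/local estimate on each cube. On a fixed $Q_j$ I would use the generalized H\"older inequality (Lemma~\ref{Lemma:HolderGeneralizado}), in the form of Corollary~\ref{CorolarioInv} or a direct application, to split the average: writing $f = \frac{f}{w}\cdot w$ is not quite right since $w$ is inside an Orlicz norm, so instead I would estimate $\|f\|_{L(\log L)^{1/s},Q_j}$ by relating it, through H\"older in the scale of Orlicz spaces, to $\left\|\tfrac{f}{w}\right\|$ paired against an appropriate Orlicz average of $w$. The key algebraic fact is the inverse-function inequality
\[
A^{-1}(t)\, B^{-1}(t)\, C^{-1}(t) \le c\, \Phi_{1/s}^{-1}(t),\qquad t\ge 0,
\]
for a suitable triple of Young functions $A,B,C$ whose choice is dictated by the target weight $v=M_{L(\log L)^{(1+1/s)p-1+\delta}}w$; here $B^{-1}$ should be of the form $t^{1/p}(\log)^{\cdots}$ so that $\|w\|_{B,Q_j}^{1/p}$ is controlled by $v(x)^{1/p}$ for $x\in Q_j$ (this is exactly where the exponent $(1+\tfrac1s)p-1+\delta$ on $v$ is designed to absorb the logarithmic losses), and $C$ should be the conjugate scale so that $\|h\|_{C,Q_j}$ is handled by $M h$ or directly by the normalization. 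The exponent $\delta$ enters through a quantitative version of the embedding $L(\log L)^{a}\hookrightarrow L(\log L)^{b}$ with $b>a$, or equivalently through the estimate $\log^{\alpha}t \le C(\alpha/\delta)^{\alpha} t^{\delta/\cdots}$, and tracking this constant is what produces the $\left(\tfrac{p-1}{\delta}\right)^{1/p'}$ factor; the factor $p^{1+1/s}$ comes from the $k\kappa$ in Lemma~\ref{Lemma:HolderGeneralizado} together with the $p$-th root losses in passing between $L^{p'}$ norms and Orlicz norms and from the constant in boundedness of $M$ on $L^p$ (of size $\sim p'$, but here the bookkeeping gives $p^{1/s}$ from the $s$-dependence of the Young function and an extra $p$ from the Rubio de Francia / maximal estimate).

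After summing over the sparse family, the left side becomes $\sum_j \left\|\tfrac{f}{w}\right\|_{?,Q_j}\cdot(\text{average of }v^{1/p}h) \lesssim \int \left(\tfrac{f}{w}\right)^{*} \cdot (M(h v^{1/p}))\,w$-type expression, and a final H\"older inequality in $L^{p'}(w)$ versus $L^{p}$ together with the Fefferman--Stein/weighted boundedness of $M$ (with $A_1$ or $A_\infty$ constants kept explicit, as in the Rubio de Francia construction used earlier in the paper) closes the estimate with the claimed constant. The main obstacle, and the step requiring the most care, is the precise choice of the three Young functions and the verification of the inverse inequality with a constant that, after taking the relevant roots, contributes exactly $p^{1/s}$ and $\left(\tfrac{p-1}{\delta}\right)^{1/p'}$ rather than something larger; in particular one must check that the Orlicz norm $\|w\|_{\text{(middle scale)},Q}$ really is dominated pointwise by $v$ on $Q$, which is a direct consequence of the definition of $M_{L(\log L)^{(1+1/s)p-1+\delta}}$ but needs the exponent identity $(1+\tfrac1s)p-1+\delta = p\cdot\tfrac1s + (p-1) + \delta$ to be exploited correctly when distributing logarithms across the H\"older splitting. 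Everything else—the stopping-time sparse decomposition, the duality reductions, the final $M$-boundedness—is routine and can be cited from the references already in the paper.
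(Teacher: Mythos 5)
Your plan correctly identifies generalized H\"older's inequality for Orlicz norms as the key tool and correctly intuits that the exponent $(1+\tfrac{1}{s})p-1+\delta$ and the factor $\bigl(\tfrac{p-1}{\delta}\bigr)^{1/p'}$ arise from distributing logarithms across a product of Young functions and from the boundedness constant of an Orlicz maximal operator. However, there is a genuine gap: you do not pin down the actual factorization, and the machinery you propose around it (duality, linearization, sparse/stopping-time family, Fefferman--Stein or $A_1$/$A_\infty$ weighted bounds, Rubio de Francia) is both unnecessary and a distraction from the step that is actually hard. The proof in the paper is direct and entirely unweighted once you substitute $f\mapsto f w^{1/p}$: one must verify, on each cube, a \emph{pointwise} H\"older splitting of $\|fw^{1/p}\|_{L(\log L)^{1/s},Q}$ into a product $\|f\|_{\tilde X(L^{p'}),Q}\cdot\|w\|_{L(\log L)^{(1+1/s)p-1+\delta},Q}^{1/p}$, which after taking suprema over $Q$ gives the pointwise bound
$M_{L(\log L)^{1/s}}(fw^{1/p}) \le c\,p^{1+1/s}\, M_{\tilde X(L^{p'})}(f)\cdot \bigl(M_{L(\log L)^{(1+1/s)p-1+\delta}}w\bigr)^{1/p}$,
and the second factor cancels exactly against $v^{1-p'}$ upon raising to the $p'$ and integrating. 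No sparse decomposition or weighted maximal theory is needed at all.

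What you leave marked ``?'' is precisely the crux. The paper makes the Young functions explicit: with $A_\rho(t)=t(1+\log^+t)^\rho$ and $X_\rho(t)=t/(1+\log^+t)^\rho$, one factors
\[
X_{1/s}(t) = \Bigl(\tfrac{t}{(1+\log^+t)^{(1+\frac1s)p-1+\delta}}\Bigr)^{1/p}\cdot\bigl(t(1+\log^+t)^{1+\delta(p'-1)}\bigr)^{1/p'}=F_1(t)^{1/p}F_2(t)^{1/p'},
\]
using the identity $\tfrac1s=\tfrac1p\bigl((1+\tfrac1s)p-1+\delta\bigr)-\tfrac{1}{p'}\bigl(1+\delta(p'-1)\bigr)$, and then uses the quantitative inverse estimates of Lemmas~\ref{Lema2} and~\ref{Lema3} to replace $F_1$ and $F_2$ by genuine inverse Young functions $A_{(1+1/s)p-1+\delta}^{-1}$ and $\tilde X_{1+\delta(p'-1)}^{-1}$ while tracking the resulting constants, which is where $p^{1+1/s}$ comes from. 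The factor $\bigl(\tfrac{p-1}{\delta}\bigr)^{1/p'}$ then comes not from weighted maximal theory but from the unweighted $L^{p'}$ bound for $M_{\tilde X_{1+\delta(p'-1)}(L^{p'})}$ (Lemma~2.1 of \cite{MR3327006}), by explicitly computing $\int_1^\infty \tilde X_{1+\delta(p'-1)}(t^{p'})\,t^{-p'}\,\tfrac{dt}{t}\simeq \tfrac{p-1}{\delta}$. Without specifying this factorization and these two quantitative lemmas, the proposal does not constitute a proof; and the route through duality/sparse families would in any case need to reproduce the same computation somewhere, so it buys nothing.
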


The proof of this lemma will follow ideas from \cite{MR1481632}. In particular,  we are going to obtain a precise version of the two weight inequality that appears in the proof of  Theorem 2 of that work. To do that we need precise estimates of the following inverse functions.

\begin{lem}
\label{Lema2}Let $\rho>0$, $A_{\rho}(t)=t\left(1+\log^{+}\left(t\right)\right)^{\rho}$
and $X_{\rho}(t)=\frac{t}{\left(1+\log^{+}\left(t\right)\right)^{\rho}}$.
Then 
\[
\left(\frac{1}{1+\rho}\right)^{\rho}t\leq X_{\rho}(A_{\rho}(t))\leq t.
\]
\end{lem}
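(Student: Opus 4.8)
The plan is to prove both inequalities by direct computation, treating separately the two ranges $0\le t\le 1$ (where $\log^+ t=0$) and $t>1$. On $[0,1]$ everything is transparent: $A_\rho(t)=t\le 1$, so $X_\rho(A_\rho(t))=A_\rho(t)=t$ and both bounds hold trivially (indeed with equality on the right and with room to spare on the left). So the real content is the case $t>1$, where I must analyze the composition of $A_\rho$ and $X_\rho$ with the logarithmic corrections genuinely present.

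First I would compute $A_\rho(t)=t(1+\log t)^\rho$ for $t>1$; since $A_\rho(t)\ge t>1$, the outer $X_\rho$ is also evaluated in the regime where $\log^+$ is active, giving
\[
X_\rho(A_\rho(t))=\frac{t(1+\log t)^\rho}{\bigl(1+\log\bigl(t(1+\log t)^\rho\bigr)\bigr)^\rho}
=t\left(\frac{1+\log t}{1+\log t+\rho\log(1+\log t)}\right)^{\rho}.
\]
The upper bound $X_\rho(A_\rho(t))\le t$ is then immediate, since the fraction inside the parentheses is $\le 1$ (the denominator exceeds the numerator because $\rho\log(1+\log t)\ge 0$ for $t>1$). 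For the lower bound I need to show the fraction is bounded below by $1/(1+\rho)$, i.e.
\[
1+\log t+\rho\log(1+\log t)\le (1+\rho)(1+\log t).
\]
Writing $u=1+\log t\ge 1$, this is equivalent to $\rho\log u\le \rho u$, i.e. $\log u\le u$, which holds for all $u\ge 1$ (in fact for all $u>0$). Raising to the $\rho$-th power gives $X_\rho(A_\rho(t))\ge \bigl(1/(1+\rho)\bigr)^\rho t$, as claimed.

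The argument is short and the only mild subtlety — the part I would flag as the place to be careful rather than a genuine obstacle — is bookkeeping the ranges so that one knows $\log^+$ is in its active branch at both stages of the composition; once $t>1$ is fixed this is automatic because $A_\rho(t)\ge t>1$. I would also double-check the edge behavior at $t=1$ to confirm the two cases match up continuously, and note that the elementary inequality $\log u\le u$ (equivalently $\log t\le t^\alpha/\alpha$ with $\alpha=1$, already used in the proof of Corollary \ref{Corolario1}) is exactly what drives the lower bound. This completes the proof.
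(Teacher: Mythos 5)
Your proof is correct and follows essentially the same route as the paper: split into $0<t\le 1$ and $t>1$, write out the composition explicitly, and drive the lower bound with the elementary inequality $\log(1+\log t)\le 1+\log t$. The only cosmetic difference is that you substitute $u=1+\log t$ to phrase this as $\log u\le u$, while the paper applies the inequality directly; the argument is the same.
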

\begin{proof}
Observe that 
\[
X_{\rho}(A_{\rho}(t))=\frac{t\left(1+\log^{+}\left(t\right)\right)^{\rho}}{\left(1+\log^{+}\left(t\left(1+\log^{+}\left(t\right)\right)^{\rho}\right)\right)^{\rho}}
\]
The upper bound is straightforward since 
\[
\left(1+\log^{+}\left(t\right)\right)^{\rho}\leq\left(1+\log^{+}\left(t\left(1+\log^{+}\left(t\right)\right)^{\rho}\right)\right)^{\rho}.
\]
Now we prove the lower bound. It suffices to prove that 
\[
\frac{1+\log^{+}\left(t\right)}{1+\log^{+}\left(t\left(1+\log^{+}\left(t\right)\right)^{\rho}\right)}\geq\frac{1}{1+\rho}.
\]
If $0<t\leq1$ there's nothing to prove since $\log^{+}\left(t\right)=\log^{+}\left(t\left(1+\log^{+}\left(t\right)\right)^{\rho}\right)=0$.
Suppose now that $t>1$. Then we have that 
\[
\begin{split} & \frac{1+\log^{+}\left(t\right)}{1+\log^{+}\left(t\left(1+\log^{+}\left(t\right)\right)^{\rho}\right)}=\frac{1+\log\left(t\right)}{1+\log\left(t\left(1+\log\left(t\right)\right)^{\rho}\right)}\\
 & =\frac{1+\log\left(t\right)}{1+\log\left(t\right)+\rho\log\left(1+\log\left(t\right)\right)}\geq\frac{1+\log\left(t\right)}{1+\log\left(t\right)+\rho\left(1+\log\left(t\right)\right)}\\
 & =\frac{1}{1+\rho}.
\end{split}
\]
 
\end{proof}

\begin{lem}
\label{Lema3}Let $\rho>1$, $A_{\rho}(t)=t\left(1+\log^{+}\left(t\right)\right)^{\rho}$
and $\tilde{X_{\rho}}(t)=\frac{t}{\left(1+\log^{+}\left(\frac{t}{t_{\rho}}\right)\right)^{\rho}}$
with $t_{\rho}=\rho^{\rho}$. Then 
\[
\left(1-\frac{1}{e}\right)^{\rho}t\leq A_{\rho}(\tilde{X_{\rho}}(t))\leq t\left(1+\rho\log\left(\rho\right)\right)^{\rho}.
\]
\end{lem}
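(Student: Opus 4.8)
The plan is to reduce the double inequality to a one‑variable calculus estimate. Since $\tilde{X_\rho}(t)=t/(1+\log^+(t/t_\rho))^\rho$ and $A_\rho(x)=x(1+\log^+x)^\rho$, composing gives
\[
A_\rho(\tilde{X_\rho}(t))=t\left(\frac{1+\log^+\tilde{X_\rho}(t)}{1+\log^+(t/t_\rho)}\right)^{\rho}=:t\,H(t)^{\rho}.
\]
Because $\rho>1>0$ and $1-\tfrac1e>0$, it suffices to prove $1-\tfrac1e\le H(t)\le 1+\rho\log\rho$ for every $t>0$, after which one raises to the power $\rho$. I would treat separately the two regions $0<t\le t_\rho$ and $t>t_\rho$.

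In the first region $\log^+(t/t_\rho)=0$, so $\tilde{X_\rho}(t)=t$ and $H(t)=1+\log^+t$. Since $t_\rho=\rho^\rho>1$ and $\rho>1$, one has $0\le\log^+t\le\log^+\rho^\rho=\rho\log\rho$, hence $1\le H(t)\le 1+\rho\log\rho$; in particular $H(t)\ge 1\ge 1-\tfrac1e$, and the extreme value $1+\rho\log\rho$ is attained at $t=t_\rho$.

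For the second region I would write $t=\rho^\rho e^{w}$ with $w=\log(t/t_\rho)>0$ and first verify that $\tilde{X_\rho}(t)>1$: using $e^{w/\rho}\ge 1+w/\rho$ gives $\tilde{X_\rho}(t)=\rho^\rho e^{w}/(1+w)^\rho\ge\bigl((\rho+w)/(1+w)\bigr)^{\rho}>1$ because $\rho>1$. Therefore $\log^+\tilde{X_\rho}(t)=\log\tilde{X_\rho}(t)=\rho\log\rho+w-\rho\log(1+w)$, and with $s=1+w\ge 1$,
\[
H(t)=\frac{s+\rho\log\rho-\rho\log s}{s}=1+\rho\,\psi(s),\qquad \psi(s)=\frac{\log\rho-\log s}{s}.
\]
A short computation gives $\psi'(s)=(\log s-1-\log\rho)/s^{2}$, so $\psi$ decreases on $[1,e\rho]$ and increases on $[e\rho,\infty)$ with $\psi(s)\to 0^{-}$; consequently $-\tfrac1{e\rho}=\psi(e\rho)\le\psi(s)\le\psi(1)=\log\rho$ on $[1,\infty)$. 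Substituting back, $1-\tfrac1e\le H(t)\le 1+\rho\log\rho$, which is exactly what is needed.

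The only mildly delicate point — the one I would be most careful about — is that $\tilde{X_\rho}$ is not monotone in $t$, so the bounds on $H$ cannot be read off from endpoint values; the real content is the extremization of $\psi(s)=(\log\rho-\log s)/s$ on $[1,\infty)$, whose minimum value $-1/(e\rho)$ is precisely what yields the $\rho$‑independent constant $1-1/e$ in the lower bound, while $\psi(1)=\log\rho$ gives the upper constant $1+\rho\log\rho$. The remaining steps — checking $\tilde{X_\rho}(t)>1$ in the second region so that $\log^+$ simplifies to $\log$, and passing the two‑sided bound through the increasing map $x\mapsto x^{\rho}$ (legitimate since all quantities are positive) — are routine bookkeeping.
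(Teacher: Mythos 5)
Your proof is correct and follows essentially the same route as the paper: reduce to bounding $H(t)=1+\log^+\tilde{X_\rho}(t))/(1+\log^+(t/t_\rho))$, treat $t\le t_\rho$ trivially, and for $t>t_\rho$ minimize the one-variable function (your $\psi(s)=(\log\rho-\log s)/s$, which is exactly the paper's $g_\rho/\rho$ after the substitution $s=1+\lambda-\lambda_\rho$), finding the minimum $-1/(e\rho)$ at $s=e\rho$. Your writeup is in fact slightly more complete than the paper's, since you explicitly verify $\tilde{X_\rho}(t)>1$ for $t>t_\rho$ via $e^{w/\rho}\ge 1+w/\rho$ and derive the upper bound from $\psi(s)\le\psi(1)=\log\rho$ rather than leaving both as "easy to check".
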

\begin{proof}
Observe first that
\[
A_{\rho}(\tilde{X_{\rho}}(t))=t\left(\frac{1+\log^{+}\left(\frac{t}{\left(1+\log^{+}\left(\frac{t}{t_{\rho}}\right)\right)^{\rho}}\right)}{1+\log^{+}\left(\frac{t}{t_{\rho}}\right)}\right)^{\rho}=t\Phi(t)^{\rho}
\]
We begin studying the lower bound. \\
If $t\in(0,1)$ then 
\[
A_{\rho}(\tilde{X_{\rho}}(t))=t\Phi(t)^{\rho}=t
\]
and there's nothing to prove. \\
If $t\in[1,t_{\rho}]$ then 
\[
A_{\rho}(\tilde{X_{\rho}}(t))=t\Phi(t)^{\rho}=t\left(1+\log^{+}\left(t\right)\right)^{\rho}\geq t
\]
Now if $t>t_{\rho}$, it's easy to check that $\frac{t}{\left(1+\log\left(\frac{t}{t_{\rho}}\right)\right)^{\rho}}\geq1$.
Then 
\[
A_{\rho}(\tilde{X_{\rho}}(t))=t\left(\frac{1+\log\left(\frac{t}{\left(1+\log\left(\frac{t}{t_{\rho}}\right)\right)^{\rho}}\right)}{1+\log\left(\frac{t}{t_{\rho}}\right)}\right)^{\rho}
\]
Now we observe that
\[
\frac{1+\log\left(\frac{t}{\left(1+\log\left(\frac{t}{t_{\rho}}\right)\right)^{\rho}}\right)}{1+\log\left(\frac{t}{t_{\rho}}\right)}=\frac{1+\log\left(t\right)-\rho\log\left(1+\log\left(\frac{t}{t_{\rho}}\right)\right)}{1+\log\left(\frac{t}{t_{\rho}}\right)}.
\]
Let us choose $t=e^{\lambda}$ and $t_{\rho}=e^{\lambda_{\rho}}$.
Then
\[
\frac{1+\lambda-\rho\log\left(1+\log\left(\frac{e^{\lambda}}{e^{\lambda_{\rho}}}\right)\right)}{1+\log\left(\frac{e^{\lambda}}{e^{\lambda_{\rho}}}\right)}=1+\frac{\lambda_{\rho}-\rho\log\left(1+\lambda-\lambda_{\rho}\right)}{1+\lambda-\lambda_{\rho}}=1+g_{\rho}(\lambda)
\]
Now we minimize $g_{\rho}(\lambda)$.  It's easy to check that
$g_{\rho}$ reaches its minimum when $\lambda=e^{1+\frac{\lambda_{\rho}}{\rho}}+\lambda_{\rho}-1$.
We observe that
\[
g_{\rho}\left(e^{1+\frac{\lambda_{\rho}}{\rho}}+\lambda_{\rho}-1\right)=\frac{-\rho}{e^{1+\frac{\lambda_{\rho}}{\rho}}}
\]
and since $t_{\rho}=\rho^{\rho}$ 
\[
\frac{-\rho}{e^{1+\frac{\lambda_{\rho}}{\rho}}}=-\frac{1}{e}
\]
and we obtain the desired lower bound. To finish the proof we focus
on the bound. If $t\in(0,1),$ then $A_{\rho}(\tilde{X_{\rho}}(t))=t$
and there's nothing to prove. If $t\in\left[1,t_{\rho}\right]$ then
we have that 
\[
A_{\rho}(\tilde{X_{\rho}}(t))=t(1+\log t)^{\rho}\leq t(1+\log t_{\rho})^{\rho}=t(1+\rho\log\rho)^{\rho}.
\]
Finally if $t\in\left(t_{\rho},\infty\right)$ then it's easy to check
that
\[
A_{\rho}(\tilde{X_{\rho}}(t))\leq t\left(1+\log\left(t_{\rho}\right)\right)^{\rho}.
\]
\end{proof}

Finally, with the precise control of the inverses at our disposal we are ready to give the proof of lemma \ref{lem:2Weight}. 

\begin{proof}[Proof of Lemma \ref{lem:2Weight}]
Proving
\eqref{claimSpecial2weight} is equivalent to prove that 
\[ \label{Eq2weight}
\int_{\mathbb{R}^{n}}M_{L\log L^{\frac{1}{s}}}\left(fw^{\frac{1}{p}}\right)^{p'}\left(M_{L(\log L)^{\left(1+\frac{1}{s}\right)p-1+\delta}}w\right)^{1-p'}\leq c_{n}^{p'}\left(p^{1+\frac{1}{s}}\right)^{p'}\left(\frac{p-1}{\delta}\right)\int_{\mathbb{R}^{n}}|f|^{p'}
\]
Using now the notation of Lemma \ref{Lema2}, we can write $A_{\frac{1}{s}}(t)=t(1+\log^{+}t)^{\frac{1}{s}}$
and $X_{\frac{1}{s}}(t)=\frac{t}{\left(1+\log^{+}t\right)^{\frac{1}{s}}}$
and we have that 
\[
A_{\frac{1}{s}}^{-1}(t)\geq X_{\frac{1}{s}}(t)
\]
We observe now that 
\[
\begin{split}X_{\frac{1}{s}}(t) & =\frac{t}{\left(1+\log^{+}t\right)^{\frac{1}{s}}}=\frac{t^{\frac{1}{p}}}{\left(1+\log^{+}t\right)^{\frac{1}{s}+\frac{p-1+\delta}{p}}}\cdot t^{\frac{1}{p'}}\left(1+\log^{+}t\right)^{\frac{p-1+\delta}{p}}\\
 & =\left(\frac{t}{\left(1+\log^{+}t\right)^{\left(1+\frac{1}{s}\right)p-1+\delta}}\right)^{\frac{1}{p}}\left(t\left(1+\log^{+}t\right)^{1+\delta(p'-1)}\right)^{\frac{1}{p'}}=F_{1}(t)^{\frac{1}{p}}\cdot F_{2}(t)^{\frac{1}{p'}}
\end{split}
\]
Using again the notation of Lemma \ref{Lema2}, 
\[
F_{1}(t)=X_{\left(1+\frac{1}{s}\right)p-1+\delta}(t)=\frac{t}{\left(1+\log^{+}t\right)^{\left(1+\frac{1}{s}\right)p-1+\delta}}.
\]
From that lemma it readily follows that  
\[
F_{1}(t)^{\frac{1}{p}}\geq\left(\frac{1}{\left(1+\frac{1}{s}\right)p+\delta}\right)^{\frac{\left(1+\frac{1}{s}\right)p-1+\delta}{p}}A_{\left(1+\frac{1}{s}\right)p-1+\delta}^{-1}(t)^{\frac{1}{p}}.\label{F1}
\]
Analogously, following the notation of Lemma \ref{Lema3}
\[
F_{2}(t)=A_{1+\delta(p'-1)}(t)=t\left(1+\log^{+}t\right)^{1+\delta(p'-1)}
\]
From that lemma it follows that  
\[
F_{2}(t)^{\frac{1}{p'}}\geq\left(\frac{e-1}{e}\right)^{\frac{1+\delta(p'-1)}{p'}}\tilde{X}_{1+\delta(p'-1)}^{-1}(t)^{\frac{1}{p'}}.\label{F2}
\]
Taking into account \eqref{F1} and \eqref{F2} we obtain the following estimate 
\[
A_{\frac{1}{s}}^{-1}(t)\left(e'\right)^{\frac{1+\delta(p'-1)}{p'}}\left(\left(1+\frac{1}{s}\right)p+\delta\right)^{\frac{\left(1+\frac{1}{s}\right)p-1+\delta}{p}}\geq A_{\left(1+\frac{1}{s}\right)p-1+\delta}^{-1}(t)^{\frac{1}{p}}\tilde{X}_{1+\delta(p'-1)}^{-1}(t)^{\frac{1}{p'}}\qquad t>0.
\]
Using now generalized Hölder inequality (Lemma \ref{Lemma:HolderGeneralizado}) and taking into account that, since $\delta\in(0,1)$,
\[
\left(e'\right)^{\frac{1+\delta(p'-1)}{p'}}\left(2p+\delta\right)^{\frac{\left(1+\frac{1}{s}\right)p-1+\delta}{p}}\leq cp^{1+\frac{1}{s}}
\]
and also that $\left\Vert w\right\Vert _{\Psi(L)}=\left\Vert w^{p}\right\Vert _{\Psi\left(L^{\frac{1}{p}}\right)}^{\frac{1}{p}}$
if $\Psi$ is a Young function, we have that
\[
\left\Vert fw^{\frac{1}{p}}\right\Vert _{L(\log L)^\frac{1}{s},Q}\leq cp^{1+\frac{1}{s}}\left\Vert f\right\Vert _{\tilde{X}_{1+\delta(p'-1)}(L^{p'}),Q}\left\Vert w\right\Vert _{A_{\left(1+\frac{1}{s}\right)p-1+\delta}(L),Q}^{\frac{1}{p}}
\]
and consequently
\[
M_{L{(\log L)^{\frac{1}{s}}}}\left(fw^{\frac{1}{p}}\right)\leq cp^{1+\frac{1}{s}} M_{\tilde{X}_{1+\delta(p'-1)}(L{}^{p'})}(f)M_{L(\log L)^{\left(1+\frac{1}{s}\right)p-1+\delta}}\left(w\right)^{\frac{1}{p}}.
\]
Using this estimate we have that 
\[
\begin{split} & \int_{\mathbb{R}^{n}}M_{L(\log L)^\frac{1}{s}}\left(fw^{\frac{1}{p}}\right)^{p'}\left(M_{L(\log L)^{\left(1+\frac{1}{s}\right)p-1+\delta}}w\right)^{1-p'}dx\\
 & \leq\int_{\mathbb{R}^{n}}\left(cp^{\left(1+\frac{1}{s}\right)}M_{\tilde{X}_{1+\delta(p'-1)}(L{}^{p'})}(f)M_{L(\log L)^{\left(1+\frac{1}{s}\right)p-1+\delta}}\left(w\right)^{\frac{1}{p}}\right)^{p'}\left(M_{L(\log L)^{\left(1+\frac{1}{s}\right)p-1+\delta}}w\right)^{1-p'}dx\\
 & =\left(cp^{1+\frac{1}{s}}\right)^{p'}\int_{\mathbb{R}^{n}}M_{\tilde{X}_{1+\delta(p'-1)}(L{}^{p'})}(f)^{p'}dx
\end{split}
\]
Lemma 2.1 of \cite{MR3327006} yields 
\[
\left(\int_{\mathbb{R}^{n}}M_{\tilde{X}_{1+\delta(p'-1)}(L{}^{p'})}f(x)^{p'}dx\right)^{\frac{1}{p'}}\leq c\left(\frac{p-1}{\delta}\right)^{\frac{1}{p'}}\left(\int_{\mathbb{R}^{n}}|f|^{p'}(x)dx\right)^{\frac{1}{p'}},
\]
since
\[
\left(\int_{1}^{\infty}\frac{\tilde{X}_{1+\delta(p'-1)}(t{}^{p'})}{t^{p'}}\frac{dt}{t}\right)^{\frac{1}{p'}}=\left(\frac{\left(1+(p'-1)\delta\right)}{p'}\log\left(1+(p'-1)\delta\right)+\frac{1}{(p'-1)\delta}\right)^{\frac{1}{p'}}
\]
and $0<\delta<1$ allows us to write 
\[
\left(\frac{\left(1+(p'-1)\delta\right)}{p'}\log\left(1+(p'-1)\delta\right)+\frac{1}{(p'-1)\delta}\right)^{\frac{1}{p'}}\leq c\left(\frac{p-1}{\delta}\right)^{\frac{1}{p'}}.
\]
Consequently we have that
\[
\left\Vert M_{L(\log L)^{\frac{1}{s}}}\left(fw^{\frac{1}{p}}\right)\right\Vert _{L^{p'}(v^{1-p'})}\leq cp^{1+\frac{1}{s}}\left(\frac{p-1}{\delta}\right)^{\frac{1}{p'}}\left\Vert f\right\Vert _{L^{p'}(\mathbb{R}^{n}).}
\]
This concludes the proof of \eqref{Eq2weight}.
\end{proof}

\section{Proof of Theorem \ref{Extremo}}\label{PTD}

\subsection{Case $k=1$}
\begin{proof}
By homogeneity we shall suppose that $\|b\|_{Osc_{expL^{s}}}=1$.
We consider the Calderón-Zygmund decomposition of $f$ at height $\lambda$.
That decomposition allows us to obtain a family of dyadic cubes $\{Q_{j}\}$
which are pairwise disjoint such that 
\[
\lambda\leq\frac{1}{|Q_{j}|}\int_{Q_{j}}|f|\leq2^{n}\lambda.
\]
Let us denote
\[
\Omega=\bigcup_{j}Q_{j}
\]
As usual,  we write $f=g+h$ where $g$,
the ``good'' part of $f$, is defined as 
\[
g(x)=\begin{cases}
f(x) & \quad x\in\Omega^{c}\\
f_{Q_{j}} & \quad x\in Q_{j}
\end{cases}
\]
and verifies that $|g(x)|\leq2^{n}\lambda$ a.e. and $h=\sum h_{j}$
where $h_{j}=\left(f-f_{Q_{j}}\right)\chi_{Q_{j}}$ and $f_{Q_{j}}=\frac{1}{|Q_{j}|}\int_{Q_{j}}f(x)dx$.
We denote $w^{*}(x)=w(x)\chi_{\mathbb{R}^{n}\setminus\tilde{\Omega}}(x)$
and $w_{j}(x)=w(x)\chi_{\mathbb{R}^{n}\setminus\tilde{Q_j}}$ where
$\tilde{Q_{j}}=5\sqrt{n}Q_{j}$ and $\tilde{\Omega}=\bigcup_{j}\tilde{Q}_{j}$.
Using that decomposition we can write 
\[
\begin{split}w\left(\left\{ x\in\mathbb{R}^{n}\,:\,|[b,T]f(x)|>\lambda\right\} \right) & \leq w\left(\left\{ x\in\mathbb{R}^{n}\setminus\tilde{\Omega}\,:\,|[b,T]g(x)|>\frac{\lambda}{2}\right\} \right)+w(\tilde{\Omega})\\
 & +w\left(\left\{ x\in\mathbb{R}^{n}\setminus\tilde{\Omega}\,:\,|[b,T]h(x)|>\frac{\lambda}{2}\right\} \right)\\
 & =I+II+III
\end{split}
\]
To end the proof we have to estimate $I,II$ and $III$. Let us begin with $I$. If $p>0$, Chebyschev's
inequality gives
\[
w\left(\left\{ x\in\mathbb{R}^{n}\setminus\tilde{\Omega}\,:\,|[b,T]g(x)|>\frac{\lambda}{2}\right\} \right)\leq\frac{2^{p}}{\lambda^{p}}\int_{\mathbb{R}^n}|[b,T]g(x)|^{p}w^{*}(x)dx.
\]
Let us choose $1+\frac{\varepsilon}{3\left(1+\frac{1}{s}\right)}<p<1+\frac{\varepsilon}{2\left(1+\frac{1}{s}\right)}$
y $\delta=\varepsilon-\left(1+\frac{1}{s}\right)\left(p-1\right)$.
For that choice of $p$ and $\delta$, is easy to check that 
\[
\left(p'\right)^{2p}p^{\left(1+\frac{1}{s}\right)p}\left(\frac{p-1}{\delta}\right)^{\frac{p}{p'}}\leq c_{s}\frac{1}{\varepsilon^{2}} 
\quad \text{and} \quad \left(1+\frac{1}{s}\right)p-1+\delta = \frac{1}{s}+\varepsilon.
\]

Using now Theorem \ref{TipoFuerte}, we have that 
\[
\begin{split} & \frac{2^{p}}{\lambda^{p}}\int_{\mathbb{R}^{n} }|[b,T]g(x)|^{p}w^{*}(x)dx\\
 & \leq c\,\left(p'\right)^{2p}p^{\left(1+\frac{1}{s}\right)p}\left(\frac{p-1}{\delta}\right)^{\frac{p}{p'}}
\int_{\mathbb{R}^{n}}|g(x)|^{p}M_{L(\log L)^{\left(1+\frac{1}{s}\right)p-1+\delta}}w^{*}(x)dx\\
 & \leq c\, \frac{1}{\varepsilon^{2}}\frac{2^{p}}{\lambda^{p}}\int_{\mathbb{R}^{n}}|g(x)|^{p}M_{L(\log L)^{\frac{1}{s}+\varepsilon}}w^{*}(x)dx\leq
c\,\frac{1}{\varepsilon^{2}}\frac{1}{\lambda}\int_{\mathbb{R}^{n}}|g(x)|M_{L(\log L)^{\frac{1}{s}+\varepsilon}}w^{*}(x)dx\\
 & \leq c\, \frac{1}{\varepsilon^{2}}\frac{1}{\lambda}\left(\int_{\mathbb{R}^{n}\setminus\Omega}|f(x)|M_{L(\log L)^{\frac{1}{s}+\varepsilon}}w(x)dx+\int_{ \Omega }|g(x)|M_{L(\log L)^{\frac{1}{s}+\varepsilon}}w^{*}(x)dx\right)
\end{split}
\]
and it suffices to estimate last integral. Indeed, 
\[
\begin{split} & \int_{\Omega }|g(x)|M_{L(\log L)^{\frac{1}{s}+\varepsilon}}w^{*}(x)dx\leq\sum_{j} |f|_{Q_{j}}\,\int_{Q_{j}}M_{L(\log L)^{\frac{1}{s}+\varepsilon}}w_{j}(x)dx\\
& \leq c\,\sum_{j} |Q_{j}| \frac{1}{|Q_{j}|}\int_{Q_{j}}|f(y)|dy\inf_{z\in Q_{j}}M_{L(\log L)^{\frac{1}{s}+\varepsilon}}w_{j}(z)\\
 & =c\sum_{j}\int_{Q_{j}}|f(y)|\inf_{z\in Q_{j}}M_{L(\log L)^{\frac{1}{s}+\varepsilon}}w_{j}(z)dy\leq c\sum_{j}\int_{Q_{j}}|f(y)|M_{L(\log L)^{\frac{1}{s}+\varepsilon}}w_{j}(y)dy\\
 & \leq c\int_{\Omega}|f(y)|M_{L(\log L)^{\frac{1}{s}+\varepsilon}}w(y)dy.
\end{split}
\]
Summarizing, we obtain that 
\[
I\leq c\frac{1}{\varepsilon^{2}}\int_{\mathbb{R}^{n}}\frac{|f(y)|}{\lambda}M_{L(\log L)^{\frac{1}{s}+\varepsilon}}w(y)dy.
\]
For $II$ we have the following standard estimate
\[
\begin{split}II & =w(\tilde{\Omega})\leq\sum_{j}\int_{5\sqrt{n}Q_{j}}w(x)dx=\sum_{j}|5\sqrt{n}Q_{j}|\frac{1}{|5\sqrt{n}Q_{j}|}\int_{5\sqrt{n}Q_{j}}w(x)dx\\
 & \leq\sum_{j}\left(5\sqrt{n}\right)^{n}|Q_{j}|\inf_{z\in Q_{j}}Mw(z)\leq\left(5\sqrt{n}\right)^{n}\sum_{j}\frac{1}{\lambda}\int_{Q_{j}}f(y)dy\inf_{z\in Q_{j}}Mw(z)\\
 & \leq\left(5\sqrt{n}\right)^{n}\sum_{j}\frac{1}{\lambda}\int_{Q_{j}}Mw(y)f(y)dy\leq\left(5\sqrt{n}\right)^{n}\int_{\mathbb{R}^{n}}\frac{f(y)}{\lambda}Mw(y)dy
\end{split}
\]
To estimate $III$ we split the operator as follows 
\[
[b,T]h=\sum_{j}[b,T]h_{j}=\sum_{j}\left(bT(h_{j})-T(bh_{j})\right)=\sum_{j}\left(b-b_{Q_{j}}\right)T(h_{j})-\sum_{j}T\left(\left(b-b_{Q_{j}}\right)h_{j}\right).
\]
Then we continue with 
\[
\begin{split}III & \leq w\left(\left\{ x\in\mathbb{R}^{n}\setminus\tilde{\Omega}\,:\,\left|\sum_{j}\left(b(x)-b_{Q_{j}}\right)Th_{j}(x)\right|>\frac{\lambda}{4}\right\} \right)\\
 & +w\left(\left\{ x\in\mathbb{R}^{n}\setminus\tilde{\Omega}\,:\,\left|\sum_{j}T\left(\left[b-b_{Q_{j}}\right]h_{j}\right)(x)\right|>\frac{\lambda}{4}\right\} \right)\\
 & =A+B
\end{split}
\]
To estimate $A$ we use standard computations based on the smoothness property  of the kernel $K$ and the
cancellation of each $h_{j}$, 
\[
\begin{split}A & \leq\frac{c}{\lambda}\int_{\mathbb{R}^{n}\setminus\tilde{\Omega}}\sum_{j}|b(x)-b_{Q_{j}}|\left|Th_{j}(x)\right|w(x)dx\\
 & \leq\frac{c}{\lambda}\sum_{j}\int_{\mathbb{R}^{n}\setminus\tilde{Q_{j}}}|b(x)-b|w(x)\int_{Q_{j}}|h_{j}(y)|\left|K(x,y)-K(x,x_{Q_{j}})\right|dydx\\
 & \leq\frac{c}{\lambda}\sum_{j}\int_{Q_{j}}|h_{j}(y)|\int_{\mathbb{R}^{n}\setminus\tilde{Q_{j}}}|K(x,y)-K(x,x_{Q_{j}})||b(x)-b_{Q_{j}}|w_{j}(x)dxdy\\
 & \leq\frac{c}{\lambda}\sum_{j}\int_{Q_{j}}|h_{j}(y)|\int_{\mathbb{R}^{n}\setminus\tilde{Q_{j}}}\frac{\left|y-x_{Q_{j}}\right|^{\gamma}}{\left|x-x_{Q_{j}}\right|^{n+\gamma}}|b(x)-b_{Q_{j}}|w_{j}(x)dxdy\\
 & \leq\frac{c}{\lambda}\sum_{j}\int_{Q_{j}}|h_{j}(y)|\sum_{k=1}^{\infty}\int_{2^{k}l(Q_{j})\leq|x-x_{Q_{j}}|<2^{k+1}l(Q_j)}\frac{\left|y-x_{Q_{j}}\right|^{\gamma}}{\left|x-x_{Q_{j}}\right|^{n+\gamma}}|b(x)-b_{Q_{j}}|w_{j}(x)dxdy\\
 & \leq\frac{c}{\lambda}\sum_{j}\left(\int_{Q_{j}}|h_{j}(y)|dy\right)\sum_{k=1}^{\infty}\frac{2^{-\gamma k}}{\left|2^{k+1}Q_{j}\right|}\int_{2^{k+1}Q_{j}}|b(x)-b_{Q_{j}}|w_{j}(x)dx
\end{split}
\]
We now fix one term of the sum. Using generalized Hölder inequality,
Lemma \ref{Lemma:HolderGeneralizado}, we have 
\[
\begin{split} & \sum_{k=0}^{\infty}\frac{2^{-\gamma k}}{\left|2^{k+1}Q_{j}\right|}\int_{2^{k+1}Q_{j}}|b(x)-b_{Q_{j}}|w_{j}(x)dx\\
 & \leq\sum_{k=0}^{\infty}\frac{2^{-\gamma k}}{\left|2^{k+1}Q_{j}\right|}\int_{2^{k+1}Q_{j}}|b(x)-b_{2^{k+1}Q_{j}}|w_{j}(x)dx\\
 & +\sum_{k=0}^{\infty}\frac{2^{-\gamma k}}{\left|2^{k+1}Q_{j}\right|}\int_{2^{k+1}Q_{j}}|b_{2^{k+1}Q_{j}}-b_{Q_{j}}|w_{j}(x)dx\\
 & \leq\sum_{k=1}^{\infty}2^{-\gamma k}\|b-b_{2^{k+1}Q_{j}}\|_{\exp L^{s},2^{k+1}Q_{j}}\|w_{j}\|_{L\log L^{\frac{1}{s}},2^{k+1}Q_{j}}\\
 & +\sum_{k=1}^{\infty}2^{-\gamma k}(k+1)\|b\|_{Osc_{expL^{s}}}\inf_{z\in Q_{j}}Mw_{j}(z)\\
 & \leq\sum_{k=1}^{\infty}2^{-\gamma k}\|b\|_{Osc_{expL^{s}}}\inf_{z\in Q_{j}}M_{L\log L^{\frac{1}{s}}}w_{j}(z)\\
 & +\sum_{k=1}^{\infty}2^{-\gamma k}(k+1)\|b\|_{Osc_{expL^{s}}}\inf_{z\in Q_{j}}Mw_{j}(z)\\
 & \leq c\left(\inf_{z\in Q_{j}}M_{L\log L^{\frac{1}{s}}}w_{j}(z)\sum_{k=1}^{\infty}2^{-\gamma k}+\inf_{z\in Q_{j}}Mw_{j}(z)\sum_{k=1}^{\infty}2^{-\gamma k}(k+1)\right)\\
 & \leq c\inf_{z\in Q_{j}}M_{L\log L^{\frac{1}{s}}}w_{j}(z)
\end{split}
\]
 Consequently, 
\[
\begin{split}A & \leq\frac{c}{\lambda}\sum_{j} \int_{Q_{j}}|h_{j}(y)|dy \inf_{y\in Q_{j}}M_{L\log L^{\frac{1}{s}}}(w_{j})(y)\\
 & \leq\frac{c}{\lambda}\sum_{j}\int_{Q_{j}}M_{L\log L^{\frac{1}{s}}}(w_{j})(y)|h_{j}(y)|dy\\
 & \leq\frac{c}{\lambda}\left(\int_{\mathbb{R}^{n}}|f(y)|M_{L\log L^{\frac{1}{s}}}(w_{j})(y)dy+\sum_{j}\int_{Q_{j}}M_{L\log L^{\frac{1}{s}}}(w_{j})(y)|f_{Q_{j}}|dy\right)\\
 & \leq\frac{c}{\lambda}\left(\int_{\mathbb{R}^{n}}|f(y)|M_{L\log L^{\frac{1}{s}}}(w_{j})(y)dy+\sum_{j}\int_{Q_{j}}f(y)dy\inf_{z\in Q_{j}}M_{L\log L^{\frac{1}{s}}}(w_{j})(z)dy\right)\\
 & \leq\frac{c}{\lambda}\int_{\mathbb{R}^{n}}|f(y)|M_{L\log L^{\frac{1}{s}}}(w_{j})(y)dy
\end{split}
\]
To end the proof we estimate $B$. Theorem 1.1 from \cite{MR3327006}
gives
\[
\begin{split}B & =w^{*}\left(\left\{ x\in\mathbb{R}^{n}\,:\,\left|\sum_{j}T\left(\left[b-b_{Q_{j}}\right]h_{j}\right)(x)\right|>\frac{\lambda}{4}\right\} \right)\\
 & \leq c\frac{1}{\varepsilon}\frac{1}{\lambda}\int_{\mathbb{R}^{n}}\left|\sum_{j}\left(b(x)-b_{Q_{j}}\right)h_{j}\right|(x)M_{L(\log L)^{\varepsilon}}(w^{*})(x)dx\\
 & \leq c\frac{1}{\varepsilon}\frac{1}{\lambda}\sum_{j}\int_{Q_{j}}\left|b(x)-b_{Q_{j}}\right|\left|f(x)-f_{Q_{j}}\right|M_{L(\log L)^{\varepsilon}}(w_{j})(x)dx\\
 & \leq c\frac{1}{\varepsilon}\frac{1}{\lambda}\sum_{j}\inf_{z\in Q_{j}}M_{L(\log L)^{\varepsilon}}(w_{j})(z)\left(\int_{Q_{j}}\left|b(x)-b_{Q_{j}}\right|\left|f(x)\right|dx+\int_{Q_{j}}\left|b(x)-b_{Q_{j}}\right|\left|f_{Q_{j}}\right|dx\right)\\
 & =\frac{1}{\varepsilon}\left(B_{1}+B_{2}\right)
\end{split}
\]
For $B_{2}$
\[
\begin{split}B_{2} & =\frac{c}{\lambda}\sum_{j}\inf_{z\in Q_{j}}M_{L(\log L)^{\varepsilon}}(w_{j})(z)\int_{Q_{j}}\left|b(x)-b_{Q_{j}}\right|\left|f_{Q_{j}}\right|dx\\
 & \leq\frac{c}{\lambda}\sum_{j}\frac{1}{|Q_{j}|}\int_{Q_{j}}\left|b(x)-b_{Q_{j}}\right|dx\int_{Q_{j}}|f(y)|M_{L(\log L)^{\varepsilon}}(w_{j})(y)dx\\
 & \leq\frac{c}{\lambda}\sum_{j}\|b\|_{Osc_{expL^{s}}}\int_{Q_{j}}|f(y)|M_{L(\log L)^{\varepsilon}}(w_{j})(y)dx\\
 & \leq c\sum_{j}\int_{Q_{j}}\frac{|f(y)|}{\lambda}M_{L(\log L)^{\varepsilon}}(w_{j})(y)dy\\
 & \leq c\int_{\mathbb{R}^{n}}\frac{|f(x)|}{\lambda}M_{L(\log L)^{\varepsilon}}w(x)dx.
\end{split}
\]
For $B_{1}$ we use the generalized Hölder inequality Lemma \ref{Lemma:HolderGeneralizado} and we obtain 
\begin{equation}\label{HGB1}
\begin{split}B_{1} & =\frac{c}{\lambda}\sum_{j}\inf_{z\in Q_{j}}M_{L(\log L)^{\varepsilon}}(w_{j})(z)\int_{Q_{j}}\left|b(x)-b_{Q_{j}}\right|\left|f(x)\right|dx\\
 & \leq c\sum_{j}\inf_{z\in Q_{j}}M_{L(\log L)^{\varepsilon}}(w_{j})(z)\frac{1}{\lambda}\left|Q_{j}\right|\|b\|_{Osc_{expL^{s}}}\|f\|_{L(\log L)^\frac{1}{s} L,Q_{j}}\\
 & =c\sum_{j}\inf_{z\in Q_{j}}M_{L(\log L)^{\varepsilon}}(w_{j})(z)\frac{1}{\lambda}\left|Q_{j}\right|\|f\|_{L(\log L)^\frac{1}{s},Q_{j}}.
\end{split}
\end{equation}
Now we see that 
\begin{equation}\label{CZB1}
\begin{split}\frac{1}{\lambda}\left|Q_{j}\right|\|f\|_{L(\log L)^\frac{1}{s},Q_{j}} & \leq\frac{1}{\lambda}|Q_{j}|\inf_{\mu>0}\left\{ \mu+\frac{\mu}{|Q_{j}|}\int_{Q_{j}}\Phi_\frac{1}{s}\left(\frac{|f(x)|}{\mu}\right)dx\right\} \\
 & \leq\frac{1}{\lambda}|Q_{j}|\left(\lambda+\frac{\lambda}{|Q_{j}|}\int_{Q_{j}}\Phi_\frac{1}{s}\left(\frac{|f(x)|}{\lambda}\right)dx\right)=|Q_{j}|+\int_{Q_{j}}\Phi_\frac{1}{s}\left(\frac{|f(x)|}{\lambda}\right)dx\\
 & \leq\frac{1}{\lambda}\int_{Q_{j}}|f(x)|dx+\int_{Q_{j}}\Phi_\frac{1}{s}\left(\frac{|f(x)|}{\lambda}\right)dx\leq 2\int_{Q_{j}}\Phi_\frac{1}{s}\left(\frac{|f(x)|}{\lambda}\right)dx.
\end{split}
\end{equation}
Consequently 
\[
\begin{split}B_{1} & \leq c\sum_{j}\inf_{z\in Q_{j}}M_{L(\log L)^{\varepsilon}}(w_{j})(z)\int_{Q_{j}}\Phi_\frac{1}{s}\left(\frac{|f(x)|}{\lambda}\right)dx\\
 & \leq c\sum_{j}\int_{Q_{j}}\Phi_\frac{1}{s}\left(\frac{|f(x)|}{\lambda}\right)M_{L(\log L)^{\varepsilon}}(w_{j})(x)dx\\
 & \leq c\int_{\mathbb{R}^{n}}\Phi_\frac{1}{s}\left(\frac{|f(x)|}{\lambda}\right)M_{L(\log L)^{\varepsilon}}(w)(x)dx.
\end{split}
\]

\end{proof}

\subsection{Case $k>1$}
\begin{proof}
Let us suppose that the desired inequality holds for $l\leq k-1$ 
symbols. By homogeneity we may assume that $\|b\|_{Osc_{expL^{s_{1}}}}=\dots=\|b\|_{Osc_{expL^{s_{k}}}}=1$.
Using the Calderón-Zygmund decomposition with the same notation used in the case $k=1$ we can write 
\[
\begin{split}w\left(\left\{ x\in\mathbb{R}^{n}\,:\,|T_{\vec{b}}f(x)|>\lambda\right\} \right) & \leq w\left(\left\{ x\in\mathbb{R}^{n}\setminus\tilde{\Omega}\,:\,|T_{\vec{b}}g(x)|>\frac{\lambda}{2}\right\} \right)+w(\tilde{\Omega})\\
 & +w\left(\left\{ x\in\mathbb{R}^{n}\setminus\tilde{\Omega}\,:\,|T_{\vec{b}}h(x)|>\frac{\lambda}{2}\right\} \right)\\
 & =I+II+III
\end{split}
\]
We consider now each term separately. To estimate $I$ we use Chebyschev's
inequality for $p>1$ that will be chosen appropriately,
\[
w\left(\left\{ x\in\mathbb{R}^{n}\setminus\tilde{\Omega}\,:\,|T_{\vec{b}}g(x)|>\frac{\lambda}{2}\right\} \right)\leq\frac{2^{p}}{\lambda^{p}}\int_{\mathbb{R}}|T_{\vec{b}}g(x)|^{p}w^{*}(x)dx.
\]
Let us choose, as we did in the case $k=1$, $p$ such that $1+\frac{\varepsilon}{3\left(1+\frac{1}{s}\right)}<p<1+\frac{\varepsilon}{\left(1+\frac{1}{s}\right)2}$ 
and $\delta=\varepsilon-\left(1+\frac{1}{s}\right)p$. For this choice
of $p$ and $\delta$ we have that
\[
\left(p'\right)^{(k+1)p}p^{\left(1+\frac{1}{s}\right)p}\left(\frac{p-1}{\delta}\right)^{\frac{1}{p'}}\leq c_{s}\frac{1}{\varepsilon^{k+1}} 
\quad \text{and} \quad \left(1+\frac{1}{s}\right)p-1+\delta = \frac{1}{s}+\varepsilon
\]

Using now theorem \ref{TipoFuerte} and the choice of $\delta$ and $p$ we have that 
\[
\begin{split}\frac{2^{p}}{\lambda^{p}}\int_{\mathbb{R}}|T_{\vec{b}}g(x)|^{p}w^{*}(x)dx & \leq c_{n}\left(p'\right)^{(k+1)p}p^{\left(1+\frac{1}{s}\right)p}\left(\frac{p-1}{\delta}\right)^{\frac{1}{p'}}\int_{\mathbb{R}}|g(x)|^{p}M_{L(\log L)^{\left(1+\frac{1}{s}\right)p-1+\delta}}w^{*}(x)dx\\
 & \leq c\frac{1}{\varepsilon^{k+1}}\frac{2^{p}}{\lambda^{p}}\int_{\mathbb{R}}|g(x)|^{p}M_{L(\log L)^{\frac{1}{s}+\varepsilon}}w^{*}(x)dx
\end{split}
\]
Arguing as in the case $k=1$ we obtain that 
\[
I\leq c_{n}\frac{1}{\varepsilon^{k+1}}\int_{\mathbb{R}^{n}}\frac{|f(y)|}{\lambda}M_{L(\log L)^{\frac{1}{s}+\varepsilon}}w(y)dy.
\]
For $II$, as in the case $k=1$, we have the following estimate
\[
II\leq3^{n}\int_{\mathbb{R}^{n}}\frac{f(y)}{\lambda}Mw(y)dy
\]
 It remains to estimate  $III$. Following the
computations of page 684 of \cite{MR1895740} we can write
\begin{equation}
\label{Tvecb}
\begin{split}T_{\vec{b}}f(x) & =(b_{1}(x)-\lambda_{1})\dots(b_{k}(x)-\lambda_{k})Tf(x)\\
 & +(-1)^{k}T\left((b_{1}-\lambda_{1})\dots(b_{k}-\lambda_{k})f\right)(x)\\
 & +\sum_{i=1}^{k-1}\sum_{\sigma\in C_{i}(b)}(-1)^{k-i}\left(b(x)-\vec{\lambda}\right)_{\sigma}\int_{\mathbb{R}^{n}}\left(b(y)-\vec{\lambda}\right)_{\sigma'}K(x,y)f(y)dy.
\end{split}
\end{equation}	
Now we work on the last double summation. We observe that for each term we can write
\[
\begin{split} & \left(b(x)-\vec{\lambda}\right)_{\sigma}\int_{\mathbb{R}^{n}}\left(b(y)-\vec{\lambda}\right)_{\sigma'}K(x,y)f(y)dy\\
 & =\int_{\mathbb{R}^{n}}\left(b(y)-\vec{\lambda}\right)_{\sigma'}\left(\left[b(x)-b(y)\right]+\left[b(y)-\vec{\lambda}\right]\right)_{\sigma}K(x,y)f(y)dy\\
 & \stackrel{\tau\cup\tau'=\sigma}{=}\int_{\mathbb{R}^{n}}\left(b(y)-\vec{\lambda}\right)_{\sigma'}\sum_{j=0}^{\#\sigma}\sum_{\tau\in C_{j}(\sigma)}\left(b(x)-b(y)\right)_{\tau}\left(b(y)-\vec{\lambda}\right)_{\tau'}K(x,y)f(y)dy\\
 & =\sum_{j=0}^{\#\sigma}\sum_{\tau\in C_{j}(\sigma)}\int_{\mathbb{R}^{n}}\left(b(x)-b(y)\right)_{\tau}\left(b(y)-\vec{\lambda}\right)_{\sigma'\cup\tau'}K(x,y)f(y)dy\\
 & =\sum_{j=0}^{\#\sigma}\sum_{\tau\in C_{j}(\sigma)}T_{\vec{\tau}}\left(\left(b-\vec{\lambda}\right)_{\sigma'\cup\tau'}f\right)\\
 & =T\left((b_{1}-\lambda_{1})\dots(b_{k}-\lambda_{k})f\right)(x)+\sum_{j=1}^{\#\sigma}\sum_{\tau\in C_{j}(\sigma)}T_{\vec{\tau}}\left(\left(b-\vec{\lambda}\right)_{\sigma'\cup\tau'}f\right).
\end{split}
\]
Plugging this into the double summation of (\ref{Tvecb}), since $\tau\cup \tau'\cup \sigma' = b$ we can write, 
\[
\begin{split} & \sum_{i=1}^{k-1}\sum_{\sigma\in C_{i}(b)}(-1)^{k-i}\left(b(x)-\vec{\lambda}\right)_{\sigma}\int_{\mathbb{R}^{n}}\left(b(y)-\vec{\lambda}\right)_{\sigma'}K(x,y)f(y)dy\\
 & =c_{k}T\left((b_{1}-\lambda_{1})\dots(b_{k}-\lambda_{k})f\right)(x)+\sum_{i=1}^{k-1}\sum_{\sigma\in C_{i}(b)}c_{\sigma}T_{\vec{\sigma}}\left(\left(b-\vec{\lambda}\right)_{\sigma'}f\right)
\end{split}
\]
where $c_{\sigma}$ is a constant that counts the number of repetitions
of each $T_{\vec{\sigma}}$. Summarizing 
\[
\begin{split}T_{\vec{b}}f(x) & =(b_{1}(x)-\lambda_{1})\dots(b_{k}(x)-\lambda_{k})Tf(x)\\
 & +c_{k}T\left((b_{1}-\lambda_{1})\dots(b_{k}-\lambda_{k})f\right)(x)\\
 & +\sum_{i=1}^{k-1}\sum_{\sigma\in C_{i}(b)}c_{\sigma}T_{\vec{\sigma}}\left(\left(b(y)-\vec{\lambda}\right)_{\sigma'}f\right)(x)
\end{split}
\]
Using this for each $h_{j}$ and summing on $j$, 

\[
\begin{split}\sum_{j}T_{\vec{b}}h_{j}(x) & =\sum_{j}(b_{1}(x)-\lambda_{1})\dots(b_{k}(x)-\lambda_{k})Th_{j}(x)\\
 & +\sum_{j}c_{k}T\left((b_{1}-\lambda_{1})\dots(b_{k}-\lambda_{k})h_{j}\right)(x)\\
 & +\sum_{j}\sum_{i=1}^{k-1}\sum_{\sigma\in C_{i}(b)}c_{\sigma}T_{\vec{\sigma}}\left(\left(b-\vec{\lambda}\right)_{\sigma'}h_{j}\right)(x)
\end{split}
\]
Then we can estimate $III$ as follows
\[
\begin{split}III & \leq w\left(\left\{ y\in\mathbb{R}^{n}\setminus\tilde{\Omega}\,:\,\left|\sum_{j}\left(b_{1}(x)-\left(b_{1}\right)_{Q_{j}}\right)\dots\left(b_{k}(x)-\left(b_{k}\right)_{Q_{j}}\right)Th_{j}(x)\right|>\frac{\lambda}{6}\right\} \right)\\
 & +w\left(\left\{ y\in\mathbb{R}^{n}\setminus\tilde{\Omega}\,:\,\left|\sum_{j}c_{k}T\left(\left(b_{1}-\left(b_{1}\right)_{Q_{j}}\right)\dots\left(b_{k}-\left(b_{k}\right)_{Q_{j}}\right)h_{j}\right)(x)\right|>\frac{\lambda}{6}\right\} \right)\\
 & +w\left(\left\{ y\in\mathbb{R}^{n}\setminus\tilde{\Omega}\,:\,\left|\sum_{j}\sum_{i=1}^{k-1}\sum_{\sigma\in C_{i}^{k}}c_{\sigma}T_{\vec{\sigma}}\left(\left(b-\vec{b_{Q_{j}}}\right)_{\sigma'}h_{j}\right)(x)\right|>\frac{\lambda}{6}\right\} \right)\\
 & =L_{1}+L_{2}+L_{3}
\end{split}
\]
To estimate $L_{1}$ we denote $w_{j}=\chi_{\mathbb{R}^{n}\setminus 5\sqrt{n} Q_{j}}w$
and $B(x)=\prod_{i=1}^{k}\left|b_{i}(x)-\left(b_{i}\right)_{Q_{j}}\right|$.
Then
\[
\begin{split}L_{1} & \leq\frac{c}{\lambda}\int_{\mathbb{R}^{n}\setminus\tilde{\Omega}}\left|\sum_{j}\left(b_{1}(x)-\left(b_{1}\right)_{Q_{j}}\right)\dots\left(b_{k}(x)-\left(b_{k}\right)_{Q_{j}}\right)Th_{j}(x)\right|w(x)dx\\
 & \leq\sum_{j}\frac{c}{\lambda}\int_{\mathbb{R}^{n}\setminus\tilde{\Omega}}B(x)\left|Th_{j}(x)\right|w(x)dx=\\
 & \leq\sum_{j}\frac{c}{\lambda}\int_{\mathbb{R}^{n}\setminus\tilde{\Omega}}B(x)w(x)\left(\int_{Q_{j}}|h_{j}(y)||K(x,y)-K(x,x_{Q_{j}})|dy\right)dx\\
 & \leq\sum_{j}\frac{c}{\lambda}\int_{Q_{j}}|h_{j}(y)|\int_{\mathbb{R}^{n}\setminus5\sqrt{n}Q_{j}}B(x)w_{j}(x)|K(x,y)-K(x,x_{Q_{j}})|dxdy\\\end{split}
\]

A standard computation using the smoothness condition of $K$ yields that the latter is bounded by
\begin{equation}
\label{CompK1}
\begin{split} & \sum_{j}\frac{c}{\lambda}\int_{Q_{j}}|h_{j}(y)|\sum_{m}\int_{2^{m}l(Q_j)\leq|x-x_{Q_{j}}|\leq2^{m+1}l(Q_j)}B(x)w_{j}(x)\frac{|y-x_{Q_{j}}|^{\gamma}}{|x-x_{Q_{j}}|^{n+\gamma}}dxdy\\
 & \leq\sum_{j}\frac{c}{\lambda}\int_{Q_{j}}|h_{j}(y)|\sum_{m}\frac{2^{-m\gamma}}{(2^{m+1}l(Q_j))^{n}}\int_{|x-x_{Q_{j}}|\leq2^{m+1}l(Q_j)}B(x)w_{j}(x)dxdy=
\end{split}
\end{equation}
Let us estimate the inner sum. We have that
\[
\begin{split} & \sum_{m}\frac{2^{-m\gamma}}{(2^{m+1}l(Q_j))^{n}}\int_{|x-x_{Q_{j}}|\leq2^{m+1}l(Q_j)}B(x)w_{j}(x)dx\\
 & \leq\sum_{m}\frac{2^{-m\gamma}}{(2^{m+1}l(Q_j))^{n}}\int_{2^{m+1}Q_{j}}\prod_{i=1}^{k}\left|b_{i}(x)-\left(b_{i}\right)_{Q_{j}}\right|w_{j}(x)dx\\
 & =\sum_{m}\frac{2^{-m\gamma}}{(2^{m+1}l(Q_j))^{n}}\int_{2^{m+1}Q_{j}}\prod_{i=1}^{k}\left(\left|b_{i}(x)-\left(b_{i}\right)_{2^{m+1}Q_{j}}\right|+\left|\left(b_{i}\right)_{2^{m+1}Q_{j}}-\left(b_{i}\right)_{Q_{j}}\right|\right)w_{j}(x)dx\\
 & =\sum_{m}\frac{2^{-m\gamma}}{(2^{m+1}l(Q_j))^{n}}\int_{2^{m+1}Q_{j}}\sum_{l=0}^{k}\sum_{\sigma\in C_{l}^{k}}\left(\prod_{i\in\sigma}\left|b_{i}(x)-\left(b_{i}\right)_{2^{m+1}Q_{j}}\right|\right)\left(\prod_{i\in\sigma'}\left|\left(b_{i}\right)_{2^{m+1}Q_{j}}-\left(b_{i}\right)_{Q_{j}}\right|\right)w_{j}(x)dx\\
 & =\sum_{m}\sum_{l=0}^{k}\sum_{\sigma\in C_{l}(b)}\left(\prod_{i\in\sigma'}\left|\left(b_{i}\right)_{2^{m+1}Q_{j}}-\left(b_{i}\right)_{Q_{j}}\right|\right)\frac{2^{-m\gamma}}{(2^{m+1}l(Q_j))^{n}}\int_{2^{m+1}Q_{j}}\left(\prod_{i\in\sigma}\left|b_{i}(x)-\left(b_{i}\right)_{2^{m+1}Q_{j}}\right|\right)w_{j}(x)dx\\
 & \leq\sum_{m}\sum_{l=0}^{k}\sum_{\sigma\in C_{l}(b)}\left(\prod_{i\in\sigma'}\|b_{i}\|_{Osc_{expL^{s_{i}}}}\right)\frac{2^{-m\gamma}}{(2^{m+1}l(Q_j))^{n}}\int_{2^{m+1}Q_{j}}\left(\prod_{i\in\sigma}\left|b_{i}(x)-\left(b_{i}\right)_{2^{m+1}Q_{j}}\right|\right)w_{j}(x)dx
\end{split}
\]
Applying Corollary \ref{CorolarioInv} we have that
\[
\begin{split} & \frac{1}{(2^{m+1}l(Q_j))^{n}}\int_{2^{m+1}Q_{j}}\left(\prod_{i\in\sigma}\left|b_{i}(x)-\left(b_{i}\right)_{2^{m+1}Q_{j}}\right|\right)w_{j}(x)dx\\
 & \leq c\left(\prod_{i\in\sigma}\|b_{i}\|_{Osc_{expL^{s_{i}}}}\right)\inf_{z\in2^{m+1}Q_{j}}M_{L(\log L)^{\sum_{i\in\sigma}\frac{1}{s_{i}}}}(w_{j})(x)
\end{split}
\]
Then for each $y\in Q_{j}$ 
\[
\begin{split} & \sum_{m}\sum_{l=0}^{k}\sum_{\sigma\in C_{l}(b)}\left(\prod_{i\in\sigma'}\|b_{i}\|_{Osc_{expL^{s_{i}}}}\right)\frac{2^{-m\gamma}}{(2^{m+1}l(Q_j))^{n}}\int_{2^{m+1}Q_{j}}\left(\prod_{i\in\sigma}\left|b_{i}(x)-\left(b_{i}\right)_{2^{m+1}Q_{j}}\right|\right)w_{j}(x)dx\\
 & \leq c\sum_{m}\frac{1}{2^{m\gamma}}\sum_{l=0}^{k}\sum_{\sigma\in C_{l}(b)}\left(\prod_{s\in\sigma'}\|b_{s}\|_{Osc_{expL^{\frac{1}{r_{s}}}}}\right)\left(\prod_{i\in\sigma}\|b_{i}\|_{Osc_{expL^{\frac{1}{r_{i}}}}}\right)\inf_{z\in2^{m+1}Q_{j}}M_{L(\log L)^{\sum_{i\in\sigma}\frac{1}{s_{i}}}}(w_{j})(x)\\
 & \leq c_{k}M_{L(\log L)^{\frac{1}{s}}}(w_{j})(y)\sum_{m}\frac{1}{2^{\gamma m}}=c_{k}M_{L(\log L)^{\frac{1}{s}}}(w_{j})(y).
\end{split}
\]
Continuing the computation in \eqref{CompK1} we have that by standard estimates,
\[
\begin{split}& \sum_{j}\frac{c}{\lambda}\int_{Q_{j}}|h_{j}(y)|\sum_{m}\frac{2^{-m\varepsilon}}{(2^{m+1}l(Q_j))^{n}}\int_{|x-x_{Q_{j}}|\leq2^{m+1}l(Q_j)}B(x)w_{j}(x)dxdy\\
 & \leq\frac{c_{k}}{\lambda}\sum_{j}\int_{Q_{j}}|h_{j}(y)|M_{L(\log L)^{\frac{1}{s}}}(w_{j})(y)dy\\
 & \leq\frac{c_{k}}{\lambda}\int_{\mathbb{R}^{n}}|f(y)|M_{L(\log L)^{\frac{1}{s}}}(w)(y)dy.
\end{split}
\]
Summarizing
\[
L_{1}\leq\frac{c_{k}}{\lambda}\int_{Q_{j}}|f(y)|M_{L(\log L)^{\frac{1}{s}}}(w)(y)dy.
\]
We shall work now on $L_{2}$. Theorem 1.1 from \cite{MR3327006}
gives 
\[
\begin{split}L_{2} & =\tilde{w}\left(\left\{ y\in\mathbb{R}^{n}\,:\,\left|c_{k}T\left(\sum_{j}\left(b_{1}-\left(b_{1}\right)_{Q_{j}}\right)\dots\left(b_{k}-\left(b_{k}\right)_{Q_{j}}\right)h_{j}\right)(x)\right|>\frac{\lambda}{6}\right\} \right)\\
 & \leq\frac{c}{\lambda}\frac{1}{\varepsilon}\int_{\mathbb{R}^{n}}\left|\sum_{j}\left[\left(b_{1}(x)-\left(b_{1}\right)_{Q_{j}}\right)\dots\left(b_{k}(x)-\left(b_{k}\right)_{Q_{j}}\right)h_{j}\right]\right|M_{L(\log L)^{\varepsilon}}\tilde{w}(x)dx\\
 & \leq\frac{c}{\lambda}\frac{1}{\varepsilon}\sum_{j}\int_{Q_{j}}B(x)\left|f(x)-f_{Q_{j}}\right|M_{L(\log L)^{\varepsilon}}w_{j}(x)dx\\
 & \leq\frac{c}{\lambda}\frac{1}{\varepsilon}\sum_{j}\inf_{z\in Q_{j}}M_{L(\log L)^{\varepsilon}}w_{j}(z)\left(\int_{Q_{j}}B(x)|f(x)|dx+\int_{Q_{j}}B(x)|f_{Q_{j}}|dx\right)\\
 & =\frac{c}{\lambda}\frac{1}{\varepsilon}\left(\sum_{j}\inf_{z\in Q_{j}}M_{L(\log L)^{\varepsilon}}w_{j}(z)\int_{Q_{j}}B(x)|f(x)|dx+\sum_{j}\inf_{z\in Q_{j}}M_{L(\log L)^{\varepsilon}}w_{j}(z)\int_{Q_{j}}B(x)|f_{Q_{j}}|dx\right)\\
 & =\frac{1}{\varepsilon}\left(L_{21}+L_{22}\right)
\end{split}
\]
We estimate first $L_{22}$ as follows
\[
\begin{split} & \frac{c}{\lambda}\sum_{j}\inf_{z\in Q_{j}}M_{L(\log L)^{\varepsilon}}w_{j}(z)\int_{Q_{j}}B(x)|f_{Q_{j}}|dx\\
 & =\frac{c}{\lambda}\sum_{j}\inf_{z\in Q_{j}}M_{L(\log L)^{\varepsilon}}w_{j}(z)\left(\frac{1}{|Q_{j}|}\int_{Q_{j}}B(x)dx\right)\left(\int_{Q_{j}}|f(x)|dx\right)
\end{split}
\]

Using Corrollary \ref{CorolarioInv} with $g=1$ and $f_i=\left|b_i-(b_i)_{Q_j}\right|$, we obtain the following estimate
\begin{equation}
\frac{1}{|Q_{j}|}\int_{Q_{j}}B(x)dx\leq c\prod_{i=1}^{m}\left\Vert b_{i}-\left(b_{i}\right)_{Q_{j}}\right\Vert _{\exp L^{s_{i}},Q_{j}}\leq c\|\vec{b}\|=c.\label{eq:invsExp}
\end{equation}
Then 
\[
\begin{split} & \frac{c}{\lambda}\sum_{j}\inf_{z\in Q_{j}}M_{L(\log L)^{\varepsilon}}w_{j}(z)\left(\frac{1}{|Q_{j}|}\int_{Q_{j}}B(x)dx\right)\left(\int_{Q_{j}}|f(x)|dx\right)\\
 & \leq\frac{c}{\lambda}\sum_{j}\inf_{z\in Q_{j}}M_{L(\log L)^{\varepsilon}}w_{j}(z)\left(\int_{Q_{j}}|f(x)|dx\right)\\
 & \leq\frac{c}{\lambda}\sum_{j}\int_{Q_{j}}|f(x)|M_{L(\log L)^{\varepsilon}}w_{j}(x)dx\\
 & \leq\frac{c}{\lambda}\int_{\mathbb{R}^{n}}|f(x)|M_{L(\log L)^{\varepsilon}}w_{j}(x)dx.
\end{split}
\]
Let us estimate now $L_{21}$. Using generalized Hölder inequality
(Lemma \ref{Lemma:HolderGeneralizado}) similarly as we did in \eqref{HGB1}
\[
\begin{split}L_{21} & =\frac{c}{\lambda}\sum_{j}\inf_{z\in Q_{j}}M_{L(\log L)^{\varepsilon}}w_{j}(z)\int_{Q_{j}}B(x)|f(x)|dx\\
 & \leq \frac{c}{\lambda}\sum_{j}\inf_{z\in Q_{j}}M_{L(\log L)^{\varepsilon}}w_{j}(z)|Q_{j}|\|f\|_{L(\log L)^{\frac{1}{s}},Q_{j}}
\end{split}
\]
since $\|\vec{b}\|=1$. Also the same computation used in \eqref{CZB1} based on properties of the Calderón-Zygmund cubes $Q_{j}$ yields
%
\[
\frac{1}{\lambda}|Q_{j}|\|f\|_{L(\log L)^{\frac{1}{s}},Q_{j}}  \leq 2\int_{Q_{j}}\Phi_{\frac{1}{s}}\left(\frac{|f(x)|}{\lambda}\right)dx 
\]
Hence
\[
\begin{split}L_{21} & \leq\frac{c}{\lambda}\sum_{j}\inf_{z\in Q_{j}}M_{L(\log L)^{\varepsilon}}w_{j}(z)|Q_{j}|\|f\|_{L(\log L)^{\frac{1}{s}},Q_{j}}\\
 & \leq c\sum_{j}\inf_{z\in Q_{j}}M_{L(\log L)^{\varepsilon}}w_{j}(z)2\int_{Q_{j}}\Phi_{\frac{1}{s}}\left(\frac{|f(x)|}{\lambda}\right)dx\\
 & \leq c\sum_{j}\int_{Q_{j}}\Phi_{\frac{1}{s}}\left(\frac{|f(x)|}{\lambda}\right)M_{L(\log L)^{\varepsilon}}w_{j}(x)dx\\
 & \leq c\int_{\mathbb{R}^{n}}\Phi_{\frac{1}{s}}\left(\frac{|f(x)|}{\lambda}\right)M_{L(\log L)^{\varepsilon}}w(x)dx.
\end{split}
\]
Putting $L_{21}$ and $L_{22}$ together we have that 
\[
L_{2}\leq c\frac{1}{\varepsilon}\int_{\mathbb{R}^{n}}\Phi_{\frac{1}{s}}\left(\frac{|f(x)|}{\lambda}\right)M_{L(\log L)^{\varepsilon}}w(x)dx
\]
To conclude the proof we are left with estimating $L_{3}$ as follows 
\[
\begin{split}L_{3} & =w\left(\left\{ y\in\mathbb{R}^{n}\setminus\tilde{\Omega}\,:\,\left|\sum_{i=1}^{k-1}\sum_{\sigma\in C_{i}(b)}c_{\sigma}T_{\vec{\sigma}}\left(\sum_{j}\left(b-\vec{\lambda}\right)_{\sigma'}h_{j}\right)(x)\right|>\frac{\lambda}{6}\right\} \right)\\
 & \leq w\left(\left\{ y\in\mathbb{R}^{n}\setminus\tilde{\Omega}\,:\,\left|\sum_{i=1}^{k-1}\sum_{\sigma\in C_{i}(b)}c_{\sigma}T_{\vec{\sigma}}\left(\sum_{j}\left(b-\vec{\lambda}\right)_{\sigma'}f\chi_{Q_{j}}\right)(x)\right|>\frac{\lambda}{12}\right\} \right)\\
 & +w\left(\left\{ y\in\mathbb{R}^{n}\setminus\tilde{\Omega}\,:\,\left|\sum_{i=1}^{k-1}\sum_{\sigma\in C_{i}(b)}c_{\sigma}T_{\vec{\sigma}}\left(\sum_{j}\left(b-\vec{\lambda}\right)_{\sigma'}f_{Q_{j}}\chi_{Q_{j}}\right)(x)\right|>\frac{\lambda}{12}\right\} \right)\\
 & =L_{31}+L_{32}
\end{split}
\]
To estimate $L_{31}$ we use the inductive hypothesis. 
\[
\begin{split} & L_{31}\\
 & =w\left(\left\{ y\in\mathbb{R}^{n}\setminus\tilde{\Omega}\,:\,\left|\sum_{i=1}^{k-1}\sum_{\sigma\in C_{i}(b)}c_{\sigma}T_{\vec{\sigma}}\left(\sum_{j}\left(b-\vec{\lambda}\right)_{\sigma'}f\chi_{Q_{j}}\right)(x)\right|>\frac{\lambda}{12}\right\} \right)\\
 & \leq c\sum_{i=1}^{k-1}\sum_{\sigma\in C_{i}(b)}w\left(\left\{ y\in\mathbb{R}^{n}\setminus\tilde{\Omega}\,:\,\left|T_{\vec{\sigma}}\left(\sum_{j}\left(b-\vec{\lambda}\right)_{\sigma'}f\chi_{Q_{j}}\right)(x)\right|>\frac{\lambda}{c_k}\right\} \right)\\
 & \leq c\sum_{i=1}^{k-1}\sum_{\sigma\in C_{i}(b)}\sum_{j}\frac{1}{\varepsilon^{\#\sigma+1}}\int_{Q_{j}}\Phi_{\sum_{i\in\sigma}\frac{1}{s_{i}}}\left( \|\vec{\sigma}\| \frac{|f(x)|}{\lambda}\left(b(x)-b_{Q_{j}}\right)_{\sigma'}\right)M_{L(\log L)^{\sum_{i\in\sigma}\frac{1}{s_{i}}+\varepsilon}}(w_{j})(x)dx
\end{split}
\]

Since we are assuming that $\|b_{1}\|_{Osc_{expL^{s_{1}}}}=\|b_{2}\|_{Osc_{expL^{s_{2}}}}=\dots=\|b_{k}\|_{Osc_{expL^{s_{k}}}}=1$, for each $\sigma \subseteq b$ we have that $\|\vec{\sigma}\|=1$. Then, 
\[
\begin{split} & c\sum_{i=1}^{k-1}\sum_{\sigma\in C_{i}(b)}\sum_{j}\frac{1}{\varepsilon^{\#\sigma+1}}\int_{Q_{j}}\Phi_{\sum_{i\in\sigma}\frac{1}{s_{i}}}\left(\frac{|f(x)|}{\lambda}\left(b(x)-b_{Q_{j}}\right)_{\sigma'}\right)M_{L(\log L)^{\sum_{i\in\sigma}\frac{1}{s_{i}}+\varepsilon}}(w_{j})(x)dx\\
 & \leq c\sum_{i=1}^{k-1}\sum_{\sigma\in C_{i}(b)}\sum_{j}\frac{1}{\varepsilon^{\#\sigma+1}}\inf_{z\in Q_{j}}M_{L(\log L)^{\sum_{i\in\sigma}\frac{1}{s_{i}}+\varepsilon}}(w_{j})(z)\int_{Q_{j}}\Phi_{\sum_{i\in\sigma}\frac{1}{s_{i}}}\left(\frac{|f(x)|}{\lambda}\left(b(x)-b_{Q_{j}}\right)_{\sigma'}\right)dx
\end{split}
\]
Let us consider now 
\[
\Phi_{u}^{-1}(t)=\frac{t}{\log(e+t)^{u}}\qquad\varphi_{v}^{-1}(t)=\log(1+t)^{\frac{1}{v}}.
\]
Then 
\[
\Phi_{\frac{1}{s}}^{-1}(t)\prod_{i\in\sigma}\varphi{s_i}^{-1}(t)=\frac{t}{\log(e+t)^{\sum\frac{1}{s_{i}}}}\prod_{i\in\sigma}\log(1+t)^{\frac{1}{s_{i}}}\leq\frac{t}{\log(e+t)^{\sum_{i\in\sigma'}\frac{1}{s_{i}}}}=\Phi_{\sum_{i\in\sigma'}\frac{1}{s_{i}}}^{-1}(t)
\]
and also we know that 
\[
\Phi_{u}(t)\simeq t\left(1+\log^{+}t\right)^{u},\varphi_{v}(t)=e^{t^{v}}-1.
\]
Taking that into account, Lemma \ref{Lema2.2Pt} gives
\begin{equation}
\begin{split} & \int_{Q_{j}}\Phi_{\sum_{i\in\sigma}\frac{1}{s_{i}}}\left(\frac{|f(x)|}{\lambda}\left(b(x)-b_{Q_{j}}\right)_{\sigma'}\right)dx\\
 & \leq\int_{Q_{j}}\Phi_{\frac{1}{s}}\left(\frac{|f(x)|}{\lambda}\right)dx+\sum_{i\in\sigma}\int_{Q_{j}}\left(\exp\left(\left|b_{i}(x)-\left(b_{i}\right)_{Q_{j}}\right|^{s_{i}}\right)-1\right)dx\\
 & \leq\int_{Q_{j}}\Phi_{\frac{1}{s}}\left(\frac{|f(x)|}{\lambda}\right)dx+c\sum_{i\in\sigma}|Q_{j}|\|b_{i}\|_{expL^{s_{i}},Q_{j}}\\
 & \leq\int_{Q_{j}}\Phi_{\frac{1}{s}}\left(\frac{|f(x)|}{\lambda}\right)dx+c\sum_{i\in\sigma}|Q_{j}|\|b_{i}\|_{Osc_{expL^{s_{i}}}}\qquad\left[\|b_{i}\|_{Osc_{expL^{s_{i}}}}=1\right]\\
 & \leq\int_{Q_{j}}\Phi_{\frac{1}{s}}\left(\frac{|f(x)|}{\lambda}\right)dx+ck|Q_{j}|\\
 & \leq c_k\int_{Q_{j}}\Phi_{\frac{1}{s}}\left(\frac{|f(x)|}{\lambda}\right)dx.
\end{split}
\label{eq:1}
\end{equation}
In the last step we used properties of the Calder\'on-Zygmund cubes. 

Plugging now that estimate, 
\[
\begin{split} & c\sum_{i=1}^{k-1}\sum_{\sigma\in C_{i}^{k}}\sum_{j}\frac{1}{\varepsilon^{\#\sigma+1}}\inf_{z\in Q_{j}}M_{L(\log L)^{\sum_{i\in\sigma'}\frac{1}{s_{i}}+\varepsilon}}(w_{j})(z)\int_{Q_{j}}\Phi_{\sum_{i\in\sigma'}\frac{1}{s_{i}}}\left(\frac{|f(x)|}{\lambda}\left(b(x)-b_{Q_{j}}\right)_{\sigma}\right)dx\\
 & \leq c_k\sum_{i=1}^{k-1}\sum_{\sigma\in C_{i}^{m}}\sum_{j}\frac{1}{\varepsilon^{\#\sigma+1}}\inf_{z\in Q_{j}}M_{L(\log L)^{\sum_{i\in\sigma'}\frac{1}{s_{i}}+\varepsilon}}(w_{j})(z)\int_{Q_{j}}\Phi_{\frac{1}{s}}\left(\frac{|f(x)|}{\lambda}\right)dx\\
 & \leq c_{k}\frac{1}{\varepsilon^{k}}\sum_{j}\inf_{z\in Q_{j}}M_{L(\log L)^{\frac{1}{s}+\varepsilon}}(w_{j})(z)\int_{Q_{j}}\Phi_{\frac{1}{s}}\left(\frac{|f(x)|}{\lambda}\right)dx\\
 & \leq c_{k}\frac{1}{\varepsilon^{k}}\sum_{j}\int_{Q_{j}}M_{L(\log L)^{\frac{1}{s}+\varepsilon}}(w_{j})(x)\Phi_{\frac{1}{s}}\left(\frac{|f(x)|}{\lambda}\right)dx\\
 & \leq c_{k}\frac{1}{\varepsilon^{k}}\int_{\mathbb{R}^{n}}M_{L(\log L)^{\frac{1}{s}+\varepsilon}}(w_{j})(x)\Phi_{\frac{1}{s}}\left(\frac{|f(x)|}{\lambda}\right)dx
\end{split}
\]
For $L_{32}$ arguing in the same way we have that 
\[
E_{2}\leq c_{k}\frac{1}{\varepsilon^{k}}\sum_{i=1}^{k-1}\sum_{\sigma\in C_{i}^{k}}\sum_{j}\inf_{z\in Q_{j}}M_{L(\log L)^{\frac{1}{s}+\varepsilon}}w_{j}(z)\int_{Q_{j}}\Phi_{\sum_{i\in\sigma'}\frac{1}{s_{i}}}\left(\frac{|f_{Q_{j}}|}{\lambda}\left(b(x)-b_{Q_{j}}\right)_{\sigma}\right)dx
\]
The same computation used to obtain \eqref{eq:1} yields
\[
\int_{Q_{j}}\Phi_{\sum_{i\in\sigma'}\frac{1}{s_{i}}}\left(\frac{|f_{Q_{j}}|}{\lambda}\left(b(x)-b_{Q_{j}}\right)_{\sigma}\right)dx\leq\int_{Q_{j}}\Phi_{\frac{1}{s}}\left(\frac{|f_{Q_{j}}|}{\lambda}\right)dx+ck|Q_{j}|.
\]
Now we see that using Jensen's inequality, 
\[
\begin{split} & \int_{Q_{j}}\Phi_{\frac{1}{s}}\left(\frac{|f_{Q_{j}}|}{\lambda}\right)dx\leq|Q_{j}|\Phi_{\frac{1}{s}}\left(\frac{|f|_{Q_{j}}}{\lambda}\right)\\
\text{} & \leq|Q_{j}|\frac{1}{|Q_{j}|}\int_{Q_{j}}\Phi_{\frac{1}{s}}\left(\frac{|f(x)|}{\lambda}\right)dx=\int_{Q_{j}}\Phi_{\frac{1}{s}}\left(\frac{|f(x)|}{\lambda}\right)dx.
\end{split}
\]
Hence 
\[
\int_{Q_{j}}\Phi_{\sum_{i\in\sigma'}\frac{1}{s_{i}}}\left(\frac{|f_{Q_{j}}|}{\lambda}\left(b(x)-b_{Q_{j}}\right)_{\sigma}\right)dx\leq\int_{Q_{j}}\Phi_{\frac{1}{s}}\left(\frac{|f(x)|}{\lambda}\right)dx+ck|Q_{j}|
\]
and we finish the estimate arguing as we did for $L_{31}$.
\end{proof}

\bibliographystyle{plainnat}
\bibliography{refs}

\end{document}